\DeclareMathAlphabet{\mathcalligra}{T1}{calligra}{m}{n}
\DeclareFontShape{T1}{calligra}{m}{n}{<->s*[1.5]callig15}{}
\newtheorem{theorem}{Theorem}[section]
\newtheorem{lemma}[theorem]{Lemma}
\newtheorem{proposition}[theorem]{Proposition}
\newtheorem{corollary}[theorem]{Corollary}
\theoremstyle{definition}
\newtheorem{definition}[theorem]{Definition}
\newtheorem{remark}[theorem]{Remark}
\newtheorem{theorem-definition}[theorem]{Theorem-Definition}
\numberwithin{equation}{section}
\newcommand{\CC} {\mathbb{C}}
\newcommand{\RR} {\mathbb{R}}
\newcommand{\ZZ} {\mathbb{Z}}
\newcommand {\shD} {\mathcal{D}}
\newcommand {\shH} {\mathcal{H}}
\newcommand {\shI} {\mathcal{I}}
\newcommand {\shL} {\mathcal{L}}
\newcommand {\shP} {\mathcal{P}}
\newcommand {\fot}  {\mathfrak{t}}
\newcommand{\sExt}{\mathscr{E} \kern -1pt xt}
\newcommand {\di} {\operatorname{div}}
\newcommand {\sHom}{\mathscr{H}\kern-5pt\mathcalligra{om}}
\renewcommand {\Im} {\operatorname{Im}}
\renewcommand {\ker } {\operatorname{Ker}}
\newcommand {\Ker} {\operatorname{Ker}}
\newcommand{\Diff}{\operatorname{Diff}}
\newcommand {\rank} {\operatorname{rank}}
\newcommand {\supp} {\operatorname{supp}}
\newcommand{\sTor}{\mathscr{T} \kern -3pt or}
\newcommand {\vol} {\operatorname{vol}}
\begin{document}
\title[GQ associated to mixed polarizations on K\"ahler manifolds with T-symmetry]{Geometric quantizations associated to mixed \\polarizations on K\"ahler manifolds with T-symmetry}	

\author[Leung]{Naichung Conan Leung,}
\address{The Institute of Mathematical Sciences and Department of Mathematics\\ The Chinese University of Hong Kong\\ Shatin\\ Hong Kong}
\email{leung@math.cuhk.edu.hk}
	
\author[Wang]{Dan Wang}
\address{The Institute of Mathematical Sciences and Department of Mathematics\\ The Chinese University of Hong Kong\\ Shatin\\ Hong Kong}
\email{dwang@math.cuhk.edu.hk}
\thanks{}

\maketitle
\begin{abstract}
Let $M$ be a compact K\"ahler manifold equipped with a pre-quantum line bundle $L$. In \cite{LW1}, using $T$-symmetry, we constructed a polarization $\mathcal{P}_{\mathrm{mix}}$ on $M$, which generalizes real polarizations on toric manifolds. In this paper, we obtain the following results for the quantum space $\mathcal{H}_{\mathrm{mix}}$ associated to $\mathcal{P}_{\mathrm{mix}}$. First, $\shH_{\mathrm{mix}}$ consists of distributional sections of $L$ with supports inside $\mu^{-1}(\mathfrak{t}^{*}_{\ZZ})$. 
This gives $\mathcal{H}_{\mathrm{mix}}=\bigoplus_{\lambda \in \mathfrak{t}^{*}_{\mathbb{Z}} } \mathcal{H}_{\mathrm{mix}, \lambda}$.  
Second, the above decomposition of $\shH_{\mathrm{mix}}$ coincides with the weight decomposition for the $T$-symmetry. Third, an isomorphism $\mathcal{H}_{\mathrm{mix}, \lambda} \cong H^{0}( M\sslash_{\lambda}T, L\sslash_{\lambda}T)$, for regular $\lambda$. Namely, geometric quantization commutes with symplectic reduction.
\end{abstract}

\section{Introduction}
    Let $(M, \omega)$ be a symplectic manifold equipped with a pre-quantum line bundle $(L, \nabla)$, in particular $F_{\nabla} = -i \omega$. A polarization $\shP$ on $M$ is an integrable Lagrangian subbundle of $TM\otimes \CC$.
Geometric quantization assigns a Hilbert space $\shH_{\shP}$ to these data.
      Namely,
    \begin{equation}
    \shH_{\shP} =  \Gamma(M, L) \cap \Ker( \nabla)|_{ \shP}, \end{equation}
where $\Gamma(M, L)$ is the space of smooth sections of $L$ \footnote{In fact we need to allow distributional sections.}. When $(M, \omega, J)$ is K\"ahler, we have a K\"ahler polarization $\shP_{J}= T^{0,1}_{J}M$, and $ \shH_{\shP_{J}}= H^{0}(M,L)$. If, in addition, $M$ admits $T$-symmetry, we constructed a polarization $\shP_{\mathrm{mix}}$ using $T$-symmetry in \cite{LW1}. In this paper, we study the quantum space $\shH_{\mathrm{mix}}$ associated to $\shP_{\mathrm{mix}}$ on $M$. Concretely, we have the following assumption throughout this paper.
\begin{enumerate}
\item[$(*)$] $(M, \omega, J)$ is a compact K\"ahler manifold of real dimension $2m$ equipped with an effective Hamiltonian $n$-dimensional torus action $\rho: T^{n} \rightarrow \Diff(M, \omega, J)$ by isometries with moment map $\mu: M \rightarrow \fot^{*}$. Let $(L, \nabla, h)$ be a $T^{n}$-invariant pre-quantum line bundle on $M$.
\end{enumerate}
Recall from \cite{LW1}, a (singular) polarization $\shP_{\mathrm{mix}}$ is constructed in this situation, which is, $$\shP_{\mathrm{mix}}=(\shP_{J} \cap \shD_{\CC}) \oplus \shI_{\CC},$$ where $\shD_{\CC}=(\ker d\mu) \otimes \CC$ and $\shI_{\CC}= (\Im d\rho)\otimes \CC$. When $n=m$, i.e. $M$ is a toric variety, $\shP_{\mathrm{mix}}$ coincides with the singular real polarization defined by moment map and $\shH_{\mathrm{mix}}$ is the space of Bohr-Sommerfeld states.

Recall that there is a natural way to embed the space of smooth sections into the space of distributional sections using the Liouville measure $\vol_{M}=\frac{\omega^{m}}{m!}$. That is, for any test section $\tau \in \Gamma_{c} (M, L^{-1})$,
 \begin{align*}
 \iota: \Gamma(M,L)  \rightarrow  \Gamma_{c} (M, L^{-1})' , ~~~s \mapsto( \iota s)(\tau) = \int_{M} \langle s, \tau \rangle \vol_{M}.
\end{align*}
Then the quantum space $\shH_{\mathrm{mix}}$ can be described as:
\begin{align*}
    \shH_{\mathrm{mix}} =  \Gamma_{c} (M, L^{-1})' \cap \Ker( \nabla)|_{ \shP_{\mathrm{mix}}}, \end{align*}

 Our first result says that $\shH_{\mathrm{mix}}$ consists of distributional sections with supports inside $\mu^{-1}(\fot^{*}_{\ZZ})$ and $\shH_{\mathrm{mix}, \lambda}$ is the $\lambda$-weight subspace of $\shH_{\mathrm{mix}}$.
 
 \begin{theorem}(Theorem \ref{thm3-0-1}) Under the assumption $(*)$,  
\begin{enumerate}
\item given any $\delta \in \shH_{\mathrm{mix}}$, we have $ \supp \delta \subset \bigcup _{\lambda \in \fot^{*}_{\ZZ} } \mu^{-1} (\lambda)$. This gives the following decomposition 
$$\shH_{\mathrm{mix}} = \bigoplus_{\lambda \in \fot_{\ZZ}^{*}} \shH_{\mathrm{mix}, \lambda},$$ 
where $\shH_{\mathrm{mix},\lambda}=\{\delta \in \shH_{\mathrm{mix}} \mid \mathrm{supp} ~\delta \subset \mu^{-1}(\lambda)\};$
\item for any  $\lambda \in \fot^{*}_{\ZZ}$, $\shH_{\mathrm{mix},\lambda}$ is a $\lambda$-weight subspace in $\shH_{\mathrm{mix}}$. 
\end{enumerate}
 Therefore the decomposition
$\shH_{\mathrm{mix}} = \bigoplus_{\lambda \in \fot_{\ZZ}^{*}} \shH_{\mathrm{mix}, \lambda}$ is the weight decomposition with respect to $T^{n}$-action.
\end{theorem}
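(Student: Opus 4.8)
The plan is to reduce everything to the single algebraic identity coming from the quantization of the moment map, and then to extract geometric consequences from the compactness of the $T^{n}$-orbits. First I would set up the quantized moment map. For $\xi\in\fot$ write $X_{\xi}=d\rho(\xi)$ for the generating vector field, so that $\shI=\Im d\rho$ is spanned by the $X_{\xi}$, and let $\shL_{\xi}$ denote the infinitesimal generator of the lifted $T^{n}$-action on distributional sections of $L$. Since $(L,\nabla,h)$ is $T^{n}$-invariant and $\mu$ is the moment map, Kostant's identity reads $\shL_{\xi}=\nabla_{X_{\xi}}+i\langle\mu,\xi\rangle$, with the sign fixed by $F_{\nabla}=-i\omega$. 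Because $\shI_{\CC}\subset\shP_{\mathrm{mix}}$, every $\delta\in\shH_{\mathrm{mix}}$ satisfies $\nabla_{X_{\xi}}\delta=0$ for all $\xi\in\fot$, and hence $\shL_{\xi}\delta=i\langle\mu,\xi\rangle\delta$.

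For part (1), I would exploit that orbit generators in the integral lattice exponentiate to the identity. Let $\Lambda=\ker(\exp\colon\fot\to T^{n})$. For $\xi_{0}\in\Lambda$ the element $\exp\xi_{0}=e$ acts trivially, so $\exp(\shL_{\xi_{0}})\delta=\delta$. Since $\mu$ is $T^{n}$-invariant, $X_{\xi_{0}}\langle\mu,\cdot\rangle=0$, so the first-order operator $\shL_{\xi_{0}}$ acts on $\delta$ by iterated multiplication and $\exp(\shL_{\xi_{0}})\delta=e^{i\langle\mu,\xi_{0}\rangle}\delta$. Therefore $(e^{i\langle\mu,\xi_{0}\rangle}-1)\delta=0$. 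Here the key (and valid) implication is that $g\delta=0$ for a smooth $g$ forces $\supp\delta\subset g^{-1}(0)$, by dividing by $g$ locally where it is nonzero. Intersecting over a basis of $\Lambda$ gives $\supp\delta\subset\mu^{-1}(\fot^{*}_{\ZZ})$. As $M$ is compact, $\mu(M)$ meets $\fot^{*}_{\ZZ}$ in finitely many points and the level sets $\mu^{-1}(\lambda)$ are compact and pairwise disjoint; choosing $T^{n}$-invariant cutoffs $\chi_{\lambda}\equiv 1$ near $\mu^{-1}(\lambda)$ and locally constant near $\mu^{-1}(\fot^{*}_{\ZZ})$, I set $\delta_{\lambda}=\chi_{\lambda}\delta$. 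Then $d\chi_{\lambda}=0$ near $\supp\delta$ gives $\nabla_{v}(\chi_{\lambda}\delta)=(v\chi_{\lambda})\delta=0$ for $v\in\shP_{\mathrm{mix}}$, so $\delta_{\lambda}\in\shH_{\mathrm{mix}}$, and disjointness of supports yields the direct sum $\shH_{\mathrm{mix}}=\bigoplus_{\lambda}\shH_{\mathrm{mix},\lambda}$.

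Part (2) is where the real work lies. For $\delta\in\shH_{\mathrm{mix},\lambda}$ we already have $\shL_{\xi}\delta=i\langle\mu,\xi\rangle\delta$; to identify $\shH_{\mathrm{mix},\lambda}$ with the genuine $\lambda$-weight space I must upgrade this to $\shL_{\xi}\delta=i\langle\lambda,\xi\rangle\delta$, i.e. $(\mu_{j}-\lambda_{j})\delta=0$. This does \emph{not} follow merely from $\mu_{j}-\lambda_{j}$ vanishing on $\supp\delta$: a smooth function vanishing on the support of a distribution need not annihilate it (e.g.\ $x\,\delta'(x)\neq 0$). I expect this to be the main obstacle, and the plan is to prove that $\delta$ carries no transverse derivatives along the level set. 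Working in equivariant action--angle coordinates $(\theta,p,z)$ near a regular level set, with $\mu=\lambda+p$ and orbit connection $\nabla_{X_{k}}=\partial_{\theta_{k}}-ip_{k}$, the constraint $\nabla_{X_{k}}\delta=0$ becomes $\partial_{\theta_{k}}\delta=ip_{k}\delta$. Expanding $\delta=\sum_{\alpha}c_{\alpha}(\theta,z,\bar z)\,\partial_{p}^{\alpha}\delta(p)$ and using $p_{k}\,\partial_{p}^{\alpha}\delta(p)=-\alpha_{k}\,\partial_{p}^{\alpha-e_{k}}\delta(p)$ gives the recursion $\partial_{\theta_{k}}c_{\beta}=-i(\beta_{k}+1)c_{\beta+e_{k}}$. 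The top-order coefficients are $\theta$-independent; the next order is then affine in $\theta_{k}$, and periodicity in $\theta_{k}$ (compactness of the orbits) forces the affine slope, hence the higher coefficient, to vanish. Descending, all $c_{\alpha}$ with $|\alpha|\ge 1$ vanish, so $\delta=c_{0}\,\delta(p)$ is of order zero and $(\mu_{j}-\lambda_{j})\delta=p_{j}c_{0}\,\delta(p)=0$.

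Combining the two parts then finishes the proof: each $\delta\in\shH_{\mathrm{mix},\lambda}$ satisfies $\shL_{\xi}\delta=i\langle\lambda,\xi\rangle\delta$ for all $\xi\in\fot$, so $\shH_{\mathrm{mix},\lambda}$ is precisely the $\lambda$-weight space, the weights $\lambda\in\fot^{*}_{\ZZ}$ that occur are distinct, and $\shH_{\mathrm{mix}}=\bigoplus_{\lambda}\shH_{\mathrm{mix},\lambda}$ is the weight (isotypic) decomposition for the $T^{n}$-action. The one delicate point I would flag is the order-zero step at \emph{non-regular} $\lambda$, where the action--angle model degenerates; there I would either run the same periodicity argument along the dense principal orbit-type stratum of $\mu^{-1}(\lambda)$ and extend by continuity, or appeal to the structural description of $\shH_{\mathrm{mix},\lambda}$ established in the accompanying results.
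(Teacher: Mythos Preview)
Your Part~(1) is essentially the paper's argument, rephrased: where the paper speaks of holonomy of $(L^{-1},\nabla)$ around orbit loops, you exponentiate the Kostant identity $\shL_{\xi}=\nabla_{X_{\xi}}+i\langle\mu,\xi\rangle$ along lattice directions; both arrive at $(e^{i\langle\mu,b\rangle}-1)\delta=0$ for $b\in\fot_{\ZZ}$ and deduce the support condition. Your explicit cutoff construction of the $\delta_{\lambda}$ is a welcome addition that the paper leaves implicit.

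Part~(2), however, is handled by the paper in a genuinely different and cleaner way, and it avoids precisely the gap you flag. The paper never passes to action--angle coordinates or analyzes transverse order. Instead it decomposes the \emph{test section} into $T^{n}$-isotypes, $\tau=\sum_{k}\tau_{k}$, and computes $(\xi\cdot\delta)(\tau_{k})$ in two ways: once via the Kostant identity (using $\nabla_{\xi^{\#}}\delta=0$ and $\di\xi^{\#}=0$) to get $-\delta(i\mu^{\xi}\tau_{k})$, and once via the weight condition $\xi\cdot\tau_{k}=i\langle k,\xi\rangle\tau_{k}$ to get $-\delta(i\langle k,\xi\rangle\tau_{k})$. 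Subtracting yields
\[
\delta\bigl((\mu^{\xi}-\langle k,\xi\rangle)\tau_{k}\bigr)=0\qquad\text{for every }\xi\in\fot.
\]
Now for $k\neq\lambda$ one picks $\xi$ with $\langle\lambda,\xi\rangle\neq\langle k,\xi\rangle$; since $\mu^{\xi}|_{M^{\lambda}}=\langle\lambda,\xi\rangle$, the $T^{n}$-invariant function $\mu^{\xi}-\langle k,\xi\rangle$ is nowhere vanishing on a $T^{n}$-invariant neighbourhood of $\supp\delta$, and dividing (with $\frac{1}{\mu^{\xi}-\langle k,\xi\rangle}\tau_{k}$ again of weight $k$) gives $\delta(\tau_{k})=0$. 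Hence only the $\lambda$-component of $\tau$ contributes, i.e.\ $\delta$ has weight $\lambda$.

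What each approach buys: the paper's division trick is global and coordinate-free, and crucially it makes no regularity assumption on $\lambda$ --- the invertibility of $\mu^{\xi}-\langle k,\xi\rangle$ near $M^{\lambda}$ for $k\neq\lambda$ holds at singular levels as well. So the problem you isolate at non-regular $\lambda$ simply does not arise. Your local periodicity argument is correct over the regular stratum and has the merit of yielding the ``no transverse derivatives'' statement (Corollary~\ref{co3-3}) directly, but the proposed fixes at singular $\lambda$ (continuity across strata, or appeal to later structural results) are not self-contained. If you want to retain your approach, the cleanest repair is to replace the coordinate computation by the paper's test-section Fourier argument for Part~(2); alternatively, note that the identity $\delta((\mu^{\xi}-\langle k,\xi\rangle)\tau_{k})=0$ already follows from your relation $\shL_{\xi}\delta=i\langle\mu,\xi\rangle\delta$ paired against a weight-$k$ test section, after which the division step finishes without coordinates.
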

 When $n=m$, this is a result for toric variety (see \cite{BFMN, Ham1}). Inspired by the works (see \cite{GS1}) of Guillemin and Sternberg that geometric quantizations commute with symplectic reductions, we give a geometric description of $\shH_{\mathrm{mix}, \lambda}$. Our main result (Theorem \ref{thm1-5} or Theorem \ref{thm4-0-11}) says that when $\lambda$ is an integral regular value of $\mu$, denoted as $\lambda \in \fot^{*}_{\ZZ, \mathrm{reg}}$, we have 
$$\shH_{\mathrm{mix},\lambda} \cong H^{0}(M_{\lambda}, L_{\lambda}),$$
 where $(M_{\lambda},L_{\lambda})=(M\sslash_{\lambda}T, L\sslash_{\lambda}T)$ is the symplectic reduction of $(M, L)$. 
Concretely, $M_{\lambda}=\mu^{-1}/T$, we also denote the level set $\mu^{-1}(\lambda)$ as $M^{\lambda}$. The restriction $(L^{\lambda}, \nabla)$ of pre-quantum line $(L, \nabla)$ to  $M^{\lambda}$ can be descended to the quotient space $M_{\lambda}$ denoted by $(L_{\lambda}, \nabla)$ (see \cite{GS1}).

Our second result states that, for any $s \in H^{0}(M_{\lambda}, L_{\lambda})$, there is an associated distributional section $\delta^{s} \in \Gamma_{c}(M^{\lambda}, (L^{\lambda})^{-1})'$ such that $\imath(\delta^{s})$ lies in $\shH_{\mathrm{mix}, \lambda}$, where $\imath:\Gamma_{c} (M^{\lambda}, (L^{\lambda})^{-1})' \hookrightarrow \Gamma_{c}(M, L^{-1})' $ is the natural inclusion.

 \begin{definition}(Definition \ref{def4-0-1})
For any  $\lambda \in \fot^{*}_{\ZZ, \mathrm{reg}}$ and $s \in H^{0}(M_{\lambda}, L_{\lambda})$, we define the {\em distributional section $\delta^{s} \in \Gamma_{c}(M^{\lambda},(L^{\lambda})^{-1})'$ associated to $s$} as follows: for any $\tau \in \Gamma_{c}(M^{\lambda},(L^{\lambda})^{-1})$,
$$\delta^{s} (\tau)= \int_{M^{\lambda}} \left\langle \pi^{*}s,\tau \right\rangle \vol^{\lambda},$$
where $\vol^{\lambda}$ is the volume form on $M^{\lambda}$ and $\pi: M^{\lambda} \rightarrow M_{\lambda}$ is the quotient map.
\end{definition}
 
 To see $\imath(\delta^{s}) \in \shH_{\mathrm{mix}, \lambda}$, we need to investigate the interaction between the covariant derivative on the space of smooth sections of $L$ and the covariant derivative on the space of distributional sections of $L^{\lambda}$. Our third result (Theorem \ref{thm3-5}) says that the following diagram
$$
\begin{tikzcd}
	\Gamma(M, L) \arrow{d} \arrow[hook]{r}{\vol_{M}}& \Gamma_{c}(M, L^{-1})'  \arrow{r}{\nabla_{\xi}}& \Gamma_{c}(M, L^{-1})'\\
	\Gamma(M^{\lambda}, L^{\lambda}) \arrow[hook]{r}{\vol^{\lambda}} &\Gamma_{c}(M^{\lambda}, (L^{\lambda})^{-1})' \arrow[hook]{u}{\imath} \arrow{r}{\nabla_{\xi}}& \Gamma_{c}(M^{\lambda}, (L^{\lambda})^{-1})' \arrow[hook]{u}{\imath},
\end{tikzcd}
$$ 
is a commutative diagram,
for any $\xi \in \Gamma(M, TM \otimes \CC)$ satisfying $\xi|_{M^{\lambda}} \in \Gamma(M^{\lambda}, TM^{\lambda} \otimes \CC)$.

In order to show the above diagram commutes, we use the coisotropic embedding theorem due to Weinstein \cite{We1} and further studied by Guillemin in \cite{Gui1} to relate the volume forms $\vol_{M}$ and $\vol^{\lambda}$. Then we obtain the following theorem:

\begin{theorem} (Theorem \ref{thm3-5}) For any $\lambda \in \fot^{*}_{\ZZ, \mathrm{reg}} $, $\delta \in \Gamma_{c}(M^{\lambda}, (L^{\lambda})^{-1})' $ and $\xi \in \Gamma(M, TM \otimes \CC)$ satisfying $\xi|_{M^{\lambda}} \in \Gamma(M^{\lambda}, TM^{\lambda} \otimes \CC)$, we have
\begin{equation}
\nabla_{\xi}(\imath(\delta))=\imath(\nabla_{\xi}\delta),
\end{equation}
where $\imath:\Gamma_{c} (M^{\lambda}, (L^{\lambda})^{-1})' \hookrightarrow \Gamma_{c}(M, L^{-1})' $ is the natural inclusion.
\end{theorem}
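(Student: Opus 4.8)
The plan is to unravel both sides by duality and reduce the identity to a single geometric statement about divergences, which is precisely what the coisotropic embedding theorem is designed to supply.

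First I would make the two operations explicit. The inclusion $\imath$ is dual to restriction of test sections: for $\tau\in\Gamma_c(M,L^{-1})$ one has $(\imath\delta)(\tau)=\delta(\tau|_{M^\lambda})$, using $(L^\lambda)^{-1}=(L^{-1})|_{M^\lambda}$. The covariant derivative on distributions is the formal adjoint forced by the embeddings $\iota$; since these use $\vol_M$ and $\vol^\lambda$, integration by parts gives
$$(\nabla_\xi D)(\tau)=-D\big(\nabla_\xi\tau+\di_{\vol_M}(\xi)\,\tau\big),\qquad (\nabla_\xi\delta)(\eta)=-\delta\big(\nabla_\xi\eta+\di_{\vol^\lambda}(\xi)\,\eta\big),$$
the divergence terms arising from $\mathcal{L}_\xi\vol_M=\di_{\vol_M}(\xi)\vol_M$ and its analogue on $M^\lambda$ (there are no boundary terms, as $M$ is compact and $\tau$ compactly supported). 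Evaluating $\nabla_\xi(\imath\delta)$ and $\imath(\nabla_\xi\delta)$ on an arbitrary $\tau$ and comparing, the identity reduces to two points: (i) $(\nabla_\xi\tau)|_{M^\lambda}=\nabla_{\xi|_{M^\lambda}}(\tau|_{M^\lambda})$, and (ii) $\di_{\vol_M}(\xi)|_{M^\lambda}=\di_{\vol^\lambda}(\xi|_{M^\lambda})$. Point (i) is immediate from the tangency hypothesis: a covariant derivative along a direction tangent to $M^\lambda$ depends only on the restricted connection, which is the connection defining $(L^\lambda,\nabla)$.

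Everything therefore rests on the divergence identity (ii), and this is where I would invoke the coisotropic embedding theorem. Since $\lambda\in\fot^*_{\ZZ,\mathrm{reg}}$, the level set $M^\lambda=\mu^{-1}(\lambda)$ is coisotropic with characteristic foliation given by the $T$-orbits, and $\pi\colon M^\lambda\to M_\lambda$ is the reduction. The Weinstein--Guillemin normal form identifies a tubular neighborhood of $M^\lambda$ with a neighborhood of the zero section of the model in which, writing $y=\mu-\lambda$ for the transverse coordinates and choosing connection one-forms $\theta_1,\dots,\theta_n$ for $\pi$, one has $\omega=\pi^*\omega_\lambda+d\big(\sum_i y_i\theta_i\big)$. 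Expanding $\vol_M=\omega^m/m!$ and collecting the top-degree term along $M^\lambda$ yields the product decomposition
$$\vol_M=\vol^\lambda\wedge dy_1\wedge\cdots\wedge dy_n,\qquad \vol^\lambda=\frac{\pi^*(\omega_\lambda^{\,m-n})}{(m-n)!}\wedge\theta_1\wedge\cdots\wedge\theta_n,$$
which both identifies $\vol^\lambda$ and relates the two volume forms exactly as required.

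With $\beta=dy_1\wedge\cdots\wedge dy_n$ and the decomposition $\vol_M=\vol^\lambda\wedge\beta$ in hand, I would compute $\mathcal{L}_\xi\vol_M=(\mathcal{L}_\xi\vol^\lambda)\wedge\beta+\vol^\lambda\wedge\mathcal{L}_\xi\beta$ and restrict to $M^\lambda$. The first term reproduces $\di_{\vol^\lambda}(\xi|_{M^\lambda})\,\vol_M$. The entire difficulty is concentrated in the transverse term $\vol^\lambda\wedge\mathcal{L}_\xi\beta$: since $\beta$ is built from $d(\mu-\lambda)$, one has $\mathcal{L}_\xi\beta=d(\iota_\xi\beta)$ with $\iota_\xi\beta$ a combination of the functions $d\mu_j(\xi)$, and tangency of $\xi$ means exactly that these vanish along $M^\lambda$. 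This is the step I expect to be the main obstacle, because $d(\iota_\xi\beta)$ still involves derivatives of $d\mu_j(\xi)$ transverse to $M^\lambda$; controlling it requires using that the vector fields in play are tangent to the level sets of $\mu$ (as is the case for every $\xi\in\shP_{\mathrm{mix}}\subset\shD_\CC=(\ker d\mu)\otimes\CC$), so that $d\mu_j(\xi)\equiv0$ near $M^\lambda$ and the transverse term vanishes identically. This yields (ii), and together with (i) it completes the proof that $\nabla_\xi(\imath\delta)=\imath(\nabla_\xi\delta)$.
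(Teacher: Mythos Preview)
Your overall strategy matches the paper's exactly: unwind both sides by duality to reduce the statement to the pointwise identity $i^{*}({}^{t}\nabla_{\xi}\tau)={}^{t}\nabla_{\xi}(i^{*}\tau)$, split this into (i) compatibility of $\nabla_{\xi}$ with restriction and (ii) the divergence identity $\di_{\vol_M}(\xi)|_{M^{\lambda}}=\di_{\vol^{\lambda}}(\xi|_{M^{\lambda}})$, and prove (ii) via the Weinstein--Guillemin coisotropic normal form. You also correctly isolate the key point the paper relies on: the divergence identity needs $\xi\in\shD_{\CC}$ (equivalently $\mathcal{L}_{\xi}y_{j}\equiv 0$), not merely tangency of $\xi$ along $M^{\lambda}$. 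The paper encodes this in its Lemma on $\di^{1}\xi=\di^{2}\xi$, which is stated only for $\xi\in\Gamma(M,\shP_{\mathrm{mix}})$; the theorem is only ever applied with such $\xi$, so the extra generality in the statement is never used, and indeed the identity is false for a vector field like $\xi=y\,\partial_{y}$ that is tangent to $\{y=0\}$ but has $\di_{\vol_M}(\xi)=1\neq 0$.

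There is one imprecision worth tightening. Your product decomposition $\vol_{M}=\vol^{\lambda}\wedge\beta$ holds only at $M^{\lambda}$, not in a neighborhood: the normal form gives $\omega=\pi^{*}\omega_{\lambda}+d(\sum y_{i}\theta_{i})$, so $\vol_{M}=\Omega(y)\wedge\beta$ with $\Omega(0)=\vol^{\lambda}$ (up to a constant) but $\Omega$ genuinely $y$-dependent through the terms $y_{i}\,d\theta_{i}$. Hence the Leibniz step $\mathcal{L}_{\xi}\vol_{M}=(\mathcal{L}_{\xi}\vol^{\lambda})\wedge\beta+\vol^{\lambda}\wedge\mathcal{L}_{\xi}\beta$ is not literally valid. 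The paper handles this by computing $\mathcal{L}_{\xi}$ directly on the exact expression $\vol_{M}=\tfrac{1}{m!}(i^{*}\omega+t\,d\alpha)^{m-1}\wedge\alpha\wedge dt$ and then setting $t=0$, using $\mathcal{L}_{\xi}t=0$. Your argument is easily repaired the same way: once $\xi\in\shD_{\CC}$ gives $\mathcal{L}_{\xi}\beta=0$ and $\xi(y_{j})=0$, the remainder $R=\vol_{M}-\widetilde{\vol^{\lambda}}\wedge\beta=O(y)$ satisfies $(\mathcal{L}_{\xi}R)|_{y=0}=0$, and naturality of the Lie derivative along the tangent $\xi$ then yields $(\di_{\vol_M}\xi)|_{M^{\lambda}}=\di_{\vol^{\lambda}}(\xi|_{M^{\lambda}})$ as desired.
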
 

\begin{proposition}\label{thm1-3}(Proposition \ref{pro4-0-4})
For any $\lambda \in \fot^{*}_{\ZZ, \mathrm{reg}} $ and $s \in H^{0}(M_{\lambda}, L_{\lambda})$, we have
$$\imath(\delta^{s}) \in \shH_{\mathrm{mix},\lambda},$$
where $\imath:\Gamma_{c} (M^{\lambda}, (L^{\lambda})^{-1})' \hookrightarrow \Gamma_{c}(M, L^{-1})' $ is the natural inclusion.
\end{proposition}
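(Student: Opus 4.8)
The plan is to check the two defining conditions of $\shH_{\mathrm{mix},\lambda}$ in turn: that $\imath(\delta^{s})$ is supported in $\mu^{-1}(\lambda)$, and that it lies in $\Ker(\nabla)|_{\shP_{\mathrm{mix}}}$. The support condition is immediate from the description of $\imath$ as extension along the inclusion $M^{\lambda}\hookrightarrow M$: if $\tau\in\Gamma_{c}(M,L^{-1})$ has $\supp\tau\cap M^{\lambda}=\emptyset$, then $\imath(\delta^{s})(\tau)=\delta^{s}(\tau|_{M^{\lambda}})=0$, so $\supp\imath(\delta^{s})\subset M^{\lambda}=\mu^{-1}(\lambda)$. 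It then remains to show $\nabla_{\xi}\imath(\delta^{s})=0$ for every local section $\xi$ of $\shP_{\mathrm{mix}}$ defined near $M^{\lambda}$; since $\lambda$ is regular, a neighborhood of $M^{\lambda}$ lies in the locus on which $\shP_{\mathrm{mix}}$ is a genuine subbundle.

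The first step is to bring $\xi$ into the range of Theorem~\ref{thm3-5} by checking that $\xi|_{M^{\lambda}}$ is tangent to $M^{\lambda}$ for each summand of $\shP_{\mathrm{mix}}=(\shP_{J}\cap\shD_{\CC})\oplus\shI_{\CC}$. For the K\"ahler summand this holds because $\shD_{\CC}|_{M^{\lambda}}=(\ker d\mu)\otimes\CC|_{M^{\lambda}}=TM^{\lambda}\otimes\CC$; for $\shI_{\CC}=(\Im d\rho)\otimes\CC$ it holds because $\mu$ is $T$-invariant, so the fundamental vector fields are tangent to the level set $M^{\lambda}$. With $\xi|_{M^{\lambda}}\in\Gamma(M^{\lambda},TM^{\lambda}\otimes\CC)$, Theorem~\ref{thm3-5} gives $\nabla_{\xi}\imath(\delta^{s})=\imath(\nabla_{\xi}\delta^{s})$. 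Moreover $\delta^{s}$ is by construction the image of the smooth section $\pi^{*}s$ under the embedding $\Gamma(M^{\lambda},L^{\lambda})\hookrightarrow\Gamma_{c}(M^{\lambda},(L^{\lambda})^{-1})'$ attached to $\vol^{\lambda}$, and for $\xi|_{M^{\lambda}}$ tangent to $M^{\lambda}$ the distributional covariant derivative on $M^{\lambda}$ is computed by integration by parts against $\vol^{\lambda}$ and hence extends the smooth one; thus $\nabla_{\xi}\delta^{s}$ is the image of $\nabla_{\xi}(\pi^{*}s)$. The whole problem therefore reduces to the pointwise statement $\nabla_{\xi}(\pi^{*}s)=0$ on $M^{\lambda}$ for every $\xi\in\shP_{\mathrm{mix}}$.

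I would prove this vanishing one summand at a time. For $\xi\in\shP_{J}\cap\shD_{\CC}$, I claim $d\pi(\xi|_{M^{\lambda}})$ lies in $T^{0,1}M_{\lambda}$ for the reduced complex structure: indeed $\shP_{J}\cap\shD_{\CC}$ is the antiholomorphic part of the horizontal distribution and maps isomorphically onto $T^{0,1}M_{\lambda}$ under $d\pi$. Since $(L^{\lambda},\nabla)$ is the pullback of $(L_{\lambda},\nabla)$ along horizontal directions and the reduced connection is the Chern connection, holomorphicity of $s\in H^{0}(M_{\lambda},L_{\lambda})$ gives $\nabla_{d\pi(\xi)}s=\bar\partial s=0$, hence $\nabla_{\xi}(\pi^{*}s)=\pi^{*}(\nabla_{d\pi(\xi)}s)=0$. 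For $\xi\in\shI_{\CC}$, I write $\xi$ in terms of fundamental vector fields $\xi_{X}$, $X\in\fot$, and use Kostant's identity relating $\nabla_{\xi_{X}}$, the infinitesimal $T$-action on $L$, and the moment-map function $\langle\mu,X\rangle$. On $M^{\lambda}$ we have $\langle\mu,X\rangle\equiv\langle\lambda,X\rangle$; since $\lambda\in\fot^{*}_{\ZZ}$ the character $e^{i\langle\lambda,\cdot\rangle}$ is well defined, and $\pi^{*}s$ is invariant under the associated $\lambda$-twisted $T$-action (this is precisely what it means for $s$ to descend to $M_{\lambda}$). These facts combine to cancel the moment-map term and yield $\nabla_{\xi_{X}}(\pi^{*}s)=0$. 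Together the two cases give $\nabla_{\xi}(\pi^{*}s)=0$ for all $\xi\in\shP_{\mathrm{mix}}$, whence $\nabla_{\xi}\imath(\delta^{s})=\imath(\nabla_{\xi}\delta^{s})=0$.

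The hard part will be the K\"ahler summand: verifying that $\shP_{J}\cap\shD_{\CC}$ maps under $d\pi$ precisely onto $T^{0,1}M_{\lambda}$, and that the restricted connection on $L^{\lambda}$ agrees with the pullback of the Chern connection on $L_{\lambda}$ along these horizontal antiholomorphic directions. This is where the compatibility among $J$, $\mu$, and the $T$-action underlying K\"ahler reduction is genuinely used, and where regularity of $\lambda$ guarantees smoothness of $M_{\lambda}$. The torus-direction case, by contrast, is a clean consequence of Kostant's formula once integrality of $\lambda$ is used to account for the twist.
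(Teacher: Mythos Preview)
Your proposal is correct and follows essentially the same architecture as the paper: check the support condition, invoke Theorem~\ref{thm3-5} to reduce $\nabla_{\xi}\imath(\delta^{s})=0$ to $\nabla_{\xi}\delta^{s}=0$ on $M^{\lambda}$, identify $\nabla_{\xi}\delta^{s}$ with the image of $\nabla_{\xi}(\pi^{*}s)$ via integration by parts against $\vol^{\lambda}$, and then show $\nabla_{\xi}(\pi^{*}s)=0$ pointwise.

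The one place you and the paper diverge is the final vanishing step. You split $\shP_{\mathrm{mix}}=(\shP_{J}\cap\shD_{\CC})\oplus\shI_{\CC}$ and treat the two summands separately, invoking Kostant's formula and the $\lambda$-weight property of $\pi^{*}s$ for the $\shI_{\CC}$ part. The paper instead uses a single observation covering both cases: since $(L^{\lambda},\nabla)=\pi^{*}(L_{\lambda},\nabla)$, one has $\nabla_{\xi}(\pi^{*}s)=(\nabla s)(\pi_{*}\xi)$ pointwise, and $\pi_{*}\xi_{q}\in T^{0,1}_{\pi(q)}M_{\lambda}$ for every $\xi\in\shP_{\mathrm{mix}}$ (this includes $\shI_{\CC}$, which is vertical and so pushes forward to $0\in T^{0,1}M_{\lambda}$). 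Holomorphicity of $s$ then kills $\nabla_{\xi}(\pi^{*}s)$ in one stroke. Your Kostant argument is correct but roundabout; the direct route is just to note that fundamental vector fields lie in $\ker d\pi$, so $\nabla_{\xi^{\#}}(\pi^{*}s)=0$ follows immediately from the pullback-connection identity, without ever unpacking the weight. What you flag as ``the hard part'' --- that $\shP_{J}\cap\shD_{\CC}$ pushes forward into $T^{0,1}M_{\lambda}$ --- is precisely the content of the Guillemin--Sternberg construction of $J_{\lambda}$ recalled in Section~2.3, so you may simply cite it rather than reprove it.
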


This allows us to define a map $\kappa: H^{0}(M_{\lambda}, L_{\lambda}) \rightarrow \shH_{\mathrm{mix},\lambda}$ by $s \mapsto \kappa(s)=\imath(\delta^{s})$. Finally we show that $\kappa$ is an isomorphism.
\begin{theorem} \label{thm1-5}(Theorem \ref{thm4-0-11})
 For any $\lambda \in \fot^{*}_{\ZZ, \mathrm{reg}}$, $$ \kappa: H^{0}(M_{\lambda}, L_{\lambda}) \rightarrow \shH_{\mathrm{mix},\lambda}$$ is an isomorphism.
\end{theorem}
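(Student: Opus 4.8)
The plan is to verify that $\kappa$ is injective and surjective; its well-definedness is already supplied by Proposition \ref{thm1-3}. Injectivity is the easy direction. If $\kappa(s)=\imath(\delta^{s})=0$, then since the natural inclusion $\imath$ is injective — a distribution on $M$ supported on $M^{\lambda}$ is determined by its action on test sections concentrated near $M^{\lambda}$ — we obtain $\delta^{s}=0$, i.e. $\int_{M^{\lambda}}\langle \pi^{*}s,\tau\rangle\,\vol^{\lambda}=0$ for every $\tau\in\Gamma_{c}(M^{\lambda},(L^{\lambda})^{-1})$. As $\pi^{*}s$ is a smooth section and the Hermitian pairing is nondegenerate, testing against a $\tau$ manufactured from $\overline{\pi^{*}s}$ via the metric forces $\pi^{*}s=0$; since $\pi\colon M^{\lambda}\to M_{\lambda}$ is a surjective submersion, $s=0$.

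The content of the theorem is surjectivity, and here I would proceed in two stages. Fix $\delta\in\shH_{\mathrm{mix},\lambda}$. By Theorem \ref{thm3-0-1} we know $\supp\delta\subset M^{\lambda}=\mu^{-1}(\lambda)$ and that $\delta$ has weight $\lambda$ for the $T^{n}$-action, while by definition of $\shH_{\mathrm{mix}}$ it is flat along $\shP_{\mathrm{mix}}=(\shP_{J}\cap\shD_{\CC})\oplus\shI_{\CC}$. The first stage extracts the $\shI_{\CC}$-direction. For $\xi\in\fot$ let $X_{\xi}$ denote the induced vector field, so $X_{\xi}\in\shI_{\CC}$ and flatness gives $\nabla_{X_{\xi}}\delta=0$; on the other hand the weight-$\lambda$ condition is the eigenvalue equation for the Kostant–Souriau lift $\nabla_{X_{\xi}}+i\langle\mu,\xi\rangle$ with eigenvalue $i\langle\lambda,\xi\rangle$. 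Subtracting the two identities yields
$$\bigl(\langle\mu,\xi\rangle-\langle\lambda,\xi\rangle\bigr)\,\delta=0\qquad\text{for all }\xi\in\fot.$$
Because $\lambda$ is a regular value, the functions $\langle\mu,\xi\rangle-\langle\lambda,\xi\rangle$ cut out $M^{\lambda}$ transversally and provide a full system of normal coordinates; by the structure theorem for distributions supported on a submanifold, the displayed vanishing forces $\delta$ to have transverse order zero, i.e. $\delta=\imath(\delta')$ for some $\delta'\in\Gamma_{c}(M^{\lambda},(L^{\lambda})^{-1})'$. I expect this vanishing of the transverse order to be the main obstacle: it is precisely the step in which the regularity of $\lambda$ and the exact form of the weight condition must be fed into the local normal-form analysis in order to rule out normal derivatives of the delta-type distribution.

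In the second stage I would promote $\delta'$ to a holomorphic section downstairs. Since the distributions $\shP_{J}\cap\shD_{\CC}$ are tangent to $M^{\lambda}$, Theorem \ref{thm3-5} applies and gives $\nabla_{\xi}\delta=\imath(\nabla_{\xi}\delta')$ for $\xi\in\shP_{J}\cap\shD_{\CC}$; hence flatness of $\delta$ along these directions, together with injectivity of $\imath$, yields $\nabla_{\xi}\delta'=0$. Meanwhile the weight-$\lambda$ property combined with $\mu|_{M^{\lambda}}=\lambda$ is exactly the condition, in the sense of Guillemin–Sternberg, under which $\delta'$ descends to a section $s$ of $L_{\lambda}$ over $M_{\lambda}$. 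Under this descent $\shP_{J}\cap\shD_{\CC}$ projects onto the anti-holomorphic tangent bundle $T^{0,1}M_{\lambda}$ of the reduced Kähler manifold, so the flatness becomes the equation $\bar\partial s=0$ in the distributional sense; elliptic regularity for $\bar\partial$ upgrades $s$ to a smooth element of $H^{0}(M_{\lambda},L_{\lambda})$. Unwinding the construction shows $\delta'=\delta^{s}$, whence $\delta=\imath(\delta^{s})=\kappa(s)$, establishing surjectivity and completing the proof.
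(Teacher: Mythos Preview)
Your proposal is correct and follows the same architecture as the paper: establish transverse order zero via the identity $(\mu^{\xi}-\langle\lambda,\xi\rangle)\,\delta=0$ (this is exactly the content of Corollary~\ref{co3-3}, which the paper derives by hand from the weight decomposition rather than by invoking the structure theorem), then descend to $M_{\lambda}$, verify $\bar\partial$-closedness, and apply elliptic regularity. The one place to tighten is your appeal to ``Guillemin--Sternberg'' for the descent: their result treats smooth sections, whereas here one needs the distributional analogues, and the paper supplies these as separate technical steps (Lemma~\ref{lem4-0-6} for descent of $T^{n}$-invariant distributional sections, and Lemma~\ref{lem4-0-7} together with Theorem~\ref{thm4-0-8} for the compatibility of divergences under $\pi$ so that $\nabla_{\xi}\delta'=0$ genuinely becomes $\bar\partial\eta=0$ downstairs).
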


%In particular,  $\shH_{\mathrm{mix},\lambda}$ is the $\lambda$-weight subspace of $ \shH_{\mathrm{mix}}$.  
We show the surjectivity of $\kappa$ via the following steps.
 %To show the surjectivity of $\kappa$, we orwe use the relationship between geometric quantization and symplectic reduction studied by Guillemin and Sternberg in \cite{GS1}, and the regularity of elliptic operator $\Delta=\bar{\partial}^{*}\bar{\partial}$. 
  First, we show that any element $\tilde{\delta}$ in $\shH_{\mathrm{mix}, \lambda}$ is locally a delta function along $\mu^{-1}(\lambda)$, and does not involve any derivative of delta functions. This implies $\tilde{\delta} = \imath(\delta)$, for some $\delta \in \Gamma_{c}(M^{\lambda}, (L^{\lambda})^{-1})'$.
  
  Second, we show that $T^{n}$-invariant distributional sections of $L^{\lambda}$ can be descended to distributional sections of $L_{\lambda}$. That is, for any $\delta \in \Gamma_{c} (M^{\lambda}, (L^{\lambda})^{-1})'$ satisfying $\nabla_{\xi^{\#}} \delta=0$, there exists a distributional section $\eta \in \Gamma_{c}(M_{\lambda}, L_{\lambda}^{-1})'$ such that $\delta= \pi^{*} \eta$ (Lemma \ref{lem4-0-6}).
  
 Third, we show that if $\nabla_{\zeta}( \pi^{*} \eta)=0$ for all $\zeta \in \Gamma(M^{\lambda}, \shP_{\mathrm{mix}})$, then $\eta$ is $\bar{\partial}$-closed (Theorem \ref{thm4-0-8}). By the regularity of elliptic operator $\Delta= \bar{\partial}^{*} \bar{\partial}$, we have $\eta$ is smooth (i.e. $\eta=\iota(s)$, for some $s \in H^{0}(M_{\lambda}, L_{\lambda})$). 
  Finally, we show that $\tilde{\delta}= \kappa(s)$.

%{ this map factor throungh Guillemin_sternburg's map}

\subsection{Acknowledgements}
We are grateful to Siye Wu for insightful comments and useful discussions. D. Wang would like to thank Qingyuan Jiang, Yutung Yau and Ki Fung Chan for many helpful discussions. This research was substantially supported by grants from the Research Grants Council of the Hong Kong Special Administrative Region, China (Project No. CUHK14301619 and CUHK14301721) and a direct grant from the Chinese University of Hong Kong.

\section{Preliminary}
\subsection{The Marsden-Weistein construction}
In this subsection, we review the basic concepts of Hamiltonian action and symplectic reduction in order to fix the notations in our setting (for more details, the reader can refer to \cite{GS1}, \cite{MW}).
\subsubsection{Hamiltonian action}
 Let $(M ,\omega)$ be a compact symplectic manifold. For $f\in C^{\infty }(M,\RR)$, the Hamiltonian vector field $X_{f}$ associated to $f$ is determined by 
$\imath _{X_{f}}\omega =-df$. This gives a Lie algebra homeomorphism $$\psi: (C^{\infty}(M;\RR), \{\cdot,\cdot \}) \rightarrow (\mathrm{Vect}(M, \omega),[\cdot , \cdot])$$ defined by $\psi(f) = X_{f}$, where $\{ ,\}$ is the Poisson bracket of two functions $f, g \in C^{\infty}(M;\RR)$ determined by $\{f, g\} = \omega(X_{f}, X_{g}).$
Let $T^{n}$ be a torus of real dimension $n$
and $\rho :T^{n}\rightarrow \mathrm{Diff}(M,\omega )$ an action of $T^{n}$
on $M$ which preserves $\omega $. Let $\fot$ be the Lie algebra of $T^{n}$. Differentiating $\rho $ at the identity
element, we have 
\begin{equation*}
d\rho :\mathfrak{t}\rightarrow \mathrm{Vect}(M,\omega ),~~~~\xi \mapsto \xi
^{\#},
\end{equation*}
where  $\mathfrak{t}$ is the Lie algebra of $T^{n}$ and $\xi ^{\#}$ is
called the fundamental vector field associated to $\xi $. 
The action of $T^{n}$ on $M$ is said to be {\em Hamiltonian} if $d\rho$ factors through $\psi$.

Let $\langle, \rangle: \fot^{*} \times \fot \rightarrow \RR$ be the natural pairing between $\fot^{*}$ and $\fot$. For each point $p \in M$, we can associate an element $\mu(p) \in \fot^{*}$ by the formula
$$\langle \mu(p),\xi \rangle=-\mu^{\xi}(p),~~ \forall \xi \in \fot.$$ 
This gives us a {\em moment mapping} $\mu: M \rightarrow \fot^{*}$
which is a $T^{n}$-equivariant map.

\subsubsection{Symplectic reduction}
We denote the set of regular values of $\mu$ by $\fot^{*}_{\mathrm{reg}}$, that is,
$$\fot^{*}_{\mathrm{reg}}= \{\lambda \in \fot^{*} | ~~\lambda \text{ is a regular value of} ~~\mu \}.$$

For any $\lambda \in \fot^{*}_{\mathrm{reg}}$, denote the level set $\mu^{-1}(\lambda)$ by $M^{\lambda}$. Then $M^{\lambda}$ is a $T^{n}$-invariant coisotropic submanifold ($i: M^{\lambda}\hookrightarrow M$) and the action of $T^{n}$ is locally free (see \cite{MW}). For simplicity, we assume $T^{n}$ acts freely on $M^{\lambda}$. Then the projection mapping
$$\pi: M^{\lambda} \rightarrow M_{\lambda}$$ is a principal $T^{n}$-fibration.
 Moreover there exists a unique symplectic form $\omega_{\lambda}$ on $M_{\lambda}$ such that $\pi^{*}\omega_{\lambda}=i^{*} \omega$.
Denote the volume form $\frac{1}{(m-n)!} \omega_{\lambda}^{m-n}$ on $M_{\lambda}$ by $\vol_{\lambda}$. Take a connection $\alpha \in \Omega^{1}(M^{\lambda}, \fot)$ on $M^{\lambda}$, $\pi^{*}\vol_{\lambda}\wedge \alpha^{n}$ is a volume form on $M^{\lambda}$ denoted by $\vol^{\lambda}$ (here $\alpha^{n}$ is a $n$-form on $M^{\lambda}$ defined by $\frac{\alpha \wedge \cdots \wedge \alpha}{v}$, where $v \in \wedge^{n} \fot$ is a $T^{n}$-invariant top form on $\fot$).

$$ 
\begin{tikzcd}
(M^{\lambda}=\mu^{-1}(\lambda), \vol^{\lambda}) \arrow[hook]{r}{i}  \arrow{d}{\pi} &  M  \\
(M_{\lambda}=\mu^{-1}(\lambda)/T^{n}, \vol_{\lambda}).
\end{tikzcd}
$$

 \subsection{Pre-quantum data}
 In this subsection, we first review the definition of $T^{n}$-invariant pre-quantum line bundles. Then we restate the result that the pre-quantum line bundle can always be descended to the reduction space by Guillemin and Sternberg in our setting (K\"ahler manifold equipped with $T^{n}$-symmetry). 
 
\begin{definition}
Let $(M, \omega,J )$ be a symplectic manifold, a {\em pre-quantum line bundle $(L,\nabla, h)$} on $M$ is a complex line bundle $L$ together with a Hermitian metric $h$ and Hermitian connection $\nabla$, such that the curvature form $F_{\nabla} = -i \omega$.  
\end{definition}
The existence of a pre-quantum line bundle $L$ on $M$ is equivalent to $[\frac{\omega}{2\pi}]$ being integral (see \cite{Kos}). 
When $M$ is K\"ahler, $L$ is an ample holomorphic line bundle.
There is a canonical representation of the Lie algebra $\fot$ on space of smooth sections of $L$ given by the operators 
\begin{equation}\label{eq2-0-1} \nabla_{\xi^{\#}} +i \mu^{\xi}, \xi \in \fot. \end{equation}
 {\em The pre-quantum line bundle is said to be $T^{n}$-invariant} if there exists a global action of $T^{n}$ on $L$ such that the induced action of $\fot$ is given by (\ref{eq2-0-1}). It is always possible if the $T^{n}$-action on $M$ is Hamiltonian (see \cite{Kos}).

Let $\fot_{\ZZ}$ be the kernel of the exponential map $\operatorname{exp}: T^{n} \rightarrow \fot$ and $\fot_{\ZZ}^{*} \subset \fot^{*}$ be the dual lattice of $\fot_{\ZZ}$. We denote the set of integral regular values of $\mu$ by $\fot^{*}_{\ZZ, \mathrm{reg}}$, that is, $\fot^{*}_{\ZZ, \mathrm{reg}}=\fot^{*}_{\mathrm{reg}} \cap \fot^{*}_{\ZZ}$.

Guillemin and Sternberg in \cite{GS1} showed that there are associated pre-quantum data on the reduction space $M_{\lambda}$, for $\lambda \in \fot^{*}_{\ZZ, \mathrm{reg}}$.
\begin{theorem}\label{thm2-0-1}\cite[Theorem 3.2]{GS1} There is a unique line bundle with connection  $( L_{\lambda}, \nabla_{\lambda})$ on $M_{\lambda}$ such that 
\begin{equation}
\pi^{*}{L_{\lambda}}=i^{*}L=: L^{\lambda}, ~~~ \text{and}~~\/~~  \pi^{*}\nabla_{\lambda}=i^{*}\nabla.
\end{equation}\end{theorem}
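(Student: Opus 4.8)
The plan is to realize $(L_{\lambda},\nabla_{\lambda})$ as the $T^{n}$-quotient of $(L^{\lambda}, i^{*}\nabla)$, with the integrality $\lambda\in\fot^{*}_{\ZZ}$ entering precisely to kill the holonomy of $i^{*}\nabla$ around the orbits. First I would descend the line bundle itself. Since we assume $T^{n}$ acts freely on $M^{\lambda}$ and the $T^{n}$-invariance of $(L,\nabla,h)$ equips $L^{\lambda}=i^{*}L$ with a lift of the $T^{n}$-action, the orbit space $L_{\lambda}:=L^{\lambda}/T^{n}$ is a Hermitian line bundle over $M_{\lambda}=M^{\lambda}/T^{n}$ with $\pi^{*}L_{\lambda}=L^{\lambda}$ by construction; the invariant metric $h$ descends to an $h_{\lambda}$ along the way.

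The real content is descending the connection. I would first record that $M^{\lambda}$ is coisotropic with the $T^{n}$-orbits as its null directions: for $v\in T_{p}M^{\lambda}=\ker d\mu_{p}$ one has $\omega(\xi^{\#},v)=\langle d\mu(v),\xi\rangle=0$, so $(T_{p}M^{\lambda})^{\omega}=\fot\cdot p$ lies inside $T_{p}M^{\lambda}$. In particular $i^{*}\omega$ annihilates every $\xi^{\#}$, the curvature $F_{i^{*}\nabla}=i^{*}F_{\nabla}=-i\,\pi^{*}\omega_{\lambda}$ is horizontal, and $i^{*}\nabla$ restricts to a flat connection on each orbit $O_{p}\cong T^{n}$. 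Hence its holonomy around the closed loop generated by $\xi\in\fot_{\ZZ}$ is a well-defined character $\fot_{\ZZ}\to\U(1)$, which I would compute by comparing parallel transport with the lifted group action: by the Kostant--Souriau formula (\ref{eq2-0-1}) the generator of the action on $L$ is the horizontal lift of $\xi^{\#}$ twisted by the fibre rotation $i\mu^{\xi}$, where $\mu^{\xi}=-\langle\lambda,\xi\rangle$ is constant on $M^{\lambda}$; since $\exp(\xi)=e$ acts as the identity on $L$, this forces the holonomy to equal $e^{i\langle\lambda,\xi\rangle}$. For $\lambda$ in the dual lattice $\fot^{*}_{\ZZ}$ this phase is trivial for every $\xi\in\fot_{\ZZ}$, which is exactly the integrality hypothesis.

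With trivial fibre holonomy in hand, I would produce $\nabla_{\lambda}$ by exhibiting basic local connection forms. Starting from any invariant unitary frame $e$ of $L^{\lambda}$, where $\nabla e=-i\theta\otimes e$ with $\iota_{\xi^{\#}}\theta=\mu^{\xi}=-\langle\lambda,\xi\rangle$ by (\ref{eq2-0-1}), I would gauge to a horizontal frame $e'=e^{i\phi}e$ by solving $\xi^{\#}(\phi)=-\langle\lambda,\xi\rangle$ along the orbits; integrality guarantees that $e^{i\phi}$ closes up around every orbit, so $e'$ descends from $M_{\lambda}$ and $\theta'=\theta-d\phi=\pi^{*}\theta_{\lambda}$ is horizontal and invariant. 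Patching these basic forms over a cover of $M_{\lambda}$ yields a Hermitian connection $\nabla_{\lambda}$ on $L_{\lambda}$ with $\pi^{*}\nabla_{\lambda}=i^{*}\nabla$. Uniqueness is then immediate: $\pi$ is a surjective submersion, so $\pi^{*}$ is injective on line bundles and on connection forms, pinning down $(L_{\lambda},\nabla_{\lambda})$.

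The main obstacle is the holonomy computation of the second paragraph---correctly matching the prequantum group action against parallel transport and reading off the phase as $e^{i\langle\lambda,\xi\rangle}$. This is where both structural inputs are used decisively: the coisotropy of $M^{\lambda}$ makes the holonomy depend only on the orbit's homotopy class, so that it is a character of $\fot_{\ZZ}$, while the integrality $\lambda\in\fot^{*}_{\ZZ}$ makes that character trivial. Everything after trivial holonomy---the gauge transformation, the descent of $\nabla$, and uniqueness---is then formal.
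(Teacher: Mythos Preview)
This theorem is not proved in the paper: it is quoted from Guillemin--Sternberg \cite[Theorem~3.2]{GS1} as background, with no argument supplied, so there is nothing here to compare your proposal against. Your outline is essentially the original Guillemin--Sternberg argument---trivialize the orbit holonomy using integrality of $\lambda$, then descend via covariantly constant local frames.

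One wrinkle worth fixing: the quotient $L_{\lambda}:=L^{\lambda}/T^{n}$ you form in the first paragraph via the \emph{given} Kostant lift is not the bundle on which $i^{*}\nabla$ descends. Kostant-invariant local sections $s$ satisfy $\nabla_{\xi^{\#}}s=i\langle\lambda,\xi\rangle s$ on $M^{\lambda}$, not $\nabla_{\xi^{\#}}s=0$, so for $\lambda\neq 0$ no connection on that quotient can pull back to $i^{*}\nabla$. The frames $e'$ of your third paragraph are instead \emph{horizontal} ($\nabla_{\xi^{\#}}e'=0$), i.e.\ invariant for the $\chi_{\lambda}$-twisted action, and they descend to a different line bundle---the correct $L_{\lambda}$---which differs from your first-paragraph quotient by the associated bundle $M^{\lambda}\times_{\chi_{\lambda}}\CC$. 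The cure is simply to let the third paragraph define $L_{\lambda}$, or equivalently to twist the action in the first paragraph by $\chi_{\lambda}$ (well-defined precisely because $\lambda\in\fot^{*}_{\ZZ}$). Relatedly, $\pi^{*}$ is not injective on isomorphism classes of line bundles alone---both quotients pull back to $L^{\lambda}$---so your uniqueness clause should be phrased for the pair $(L_{\lambda},\nabla_{\lambda})$.
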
 

\begin{corollary}\cite[Corollary 3.4]{GS1} The curvature of the connection, $\nabla_{\lambda}$, is the symplectic form $\omega_{\lambda}$.
\end{corollary}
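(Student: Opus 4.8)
The plan is to read off the curvature of $\nabla_{\lambda}$ directly from its defining property in Theorem \ref{thm2-0-1}, using the naturality of curvature under pullback together with the pre-quantum condition $F_{\nabla}=-i\omega$ on $M$ and the reduction identity $\pi^{*}\omega_{\lambda}=i^{*}\omega$. Throughout, $i$ denotes the inclusion $M^{\lambda}\hookrightarrow M$ while $-i\omega$ has $i$ the imaginary unit; with the paper's convention the assertion is to be read as $F_{\nabla_{\lambda}}=-i\omega_{\lambda}$, so that $(L_{\lambda},\nabla_{\lambda})$ is itself a pre-quantum line bundle on $(M_{\lambda},\omega_{\lambda})$.

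First I would recall that for any smooth map $f$ and any line bundle with connection $(E,\nabla_{E})$, the pullback connection $f^{*}\nabla_{E}$ on $f^{*}E$ satisfies $F_{f^{*}\nabla_{E}}=f^{*}F_{\nabla_{E}}$; this is the naturality of the curvature $2$-form and is immediate from the local expression of the connection. Applying this to both presentations of the single connection $i^{*}\nabla=\pi^{*}\nabla_{\lambda}$ living on $L^{\lambda}=i^{*}L=\pi^{*}L_{\lambda}$, I obtain on the one hand
\begin{equation*}
F_{\pi^{*}\nabla_{\lambda}}=\pi^{*}F_{\nabla_{\lambda}},
\end{equation*}
and on the other hand, using $F_{\nabla}=-i\omega$,
\begin{equation*}
F_{i^{*}\nabla}=i^{*}F_{\nabla}=-i\,i^{*}\omega.
\end{equation*}
Since $i^{*}\nabla$ and $\pi^{*}\nabla_{\lambda}$ are the same connection on $L^{\lambda}$, their curvatures agree, whence $\pi^{*}F_{\nabla_{\lambda}}=-i\,i^{*}\omega$.

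Next I would substitute the reduction identity $i^{*}\omega=\pi^{*}\omega_{\lambda}$, which is the defining property of $\omega_{\lambda}$ on $M_{\lambda}$, to get $\pi^{*}F_{\nabla_{\lambda}}=-i\,\pi^{*}\omega_{\lambda}=\pi^{*}(-i\omega_{\lambda})$. Finally, since $\pi:M^{\lambda}\to M_{\lambda}$ is a surjective submersion (indeed a principal $T^{n}$-fibration), the pullback $\pi^{*}$ is injective on differential forms; hence $F_{\nabla_{\lambda}}=-i\omega_{\lambda}$, which is the claim.

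There is essentially no obstacle here beyond invoking the existence-and-uniqueness statement of Theorem \ref{thm2-0-1}, which is already granted: the entire computation is a one-step pullback argument. The only point requiring a word of care is the injectivity of $\pi^{*}$ on $2$-forms, which one justifies by noting that $d\pi$ is everywhere surjective, so a form pulling back to zero must already vanish on all of $TM_{\lambda}$.
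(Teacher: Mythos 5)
Your proof is correct: the one-step pullback argument (naturality of curvature, the identities $\pi^{*}\nabla_{\lambda}=i^{*}\nabla$ and $\pi^{*}\omega_{\lambda}=i^{*}\omega$, and injectivity of $\pi^{*}$ for a surjective submersion) is exactly the standard derivation, and it is the argument behind Corollary 3.4 of \cite{GS1}, which this paper cites without reproducing a proof. Your remark on the sign convention ($F_{\nabla_{\lambda}}=-i\omega_{\lambda}$ in this paper's normalization) is also the right reading of the statement.
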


Therefore we have the following commuting diagram:
$$
\begin{tikzcd}
&(L^{\lambda}, i^{*}\nabla)\arrow{rd}\arrow[hook]{rr}\arrow{ld}&&(L, \nabla)\arrow{rd}&&\fot^{*}_{\ZZ, \mathrm{reg}} \arrow[hook]{d}
\\
(L_{\lambda},\nabla_{\lambda})\arrow{rd}&&(M^{\lambda}, \vol^{\lambda})\arrow{ld}[swap]{\pi}\arrow[hook]{rr}{i}&&(M, \vol_{M}) \arrow{r}{\mu}&\fot^{*}\\
&(M_{\lambda}, \vol_{\lambda})&&&&\end{tikzcd}
$$
By abuse of notations, we denote both $i^{*}\nabla$ and $\nabla_{\lambda}$ by $\nabla$.
In order to pull-back distributional sections from $M_{\lambda}$ to $M^{\lambda}$ later, 
we first recall how to push-forward sections of line bundle $L^{\lambda}$. 
\begin{remark}\label{rk4-1}
Let $\pi: P \rightarrow B$ be a principal $T^{n}$-bundle over $B$, $E \rightarrow B$ a line bundle over $B$, and $\pi^{*}E \rightarrow P$ the pullback line bundle. 
  Then we can define the dual map
  $$\pi^{*}: \Gamma_{c}(B,L^{-1})' \rightarrow \Gamma_{c}(P, (\pi^{*}L)^{-1})', \eta \mapsto \pi^{*}\eta $$
by $(\pi^{*}\eta)(\tau)=\eta(\pi_{*}\tau)$, for any $\tau \in \Gamma_{c}(P, L^{-1})$
   \end{remark}
 Throughout this paper, we fix a $T^{n}$-invariant $n$-form $d\theta$ on $T^{n}$ such that $\int_{T^{n}}d\theta=1$.
  When we deal with the pull-back of distribution sections of $L_{\lambda}$, we mean in the sense of Remark \ref{rk4-1} with respect to $d\theta$.

\subsection{Complex structures on symplectic reduction spaces}
In order to study the relationship between geometric quantization associated to $\shP_{\mathrm{mix}}$ and symplectic reduction, we recall the work on the existence of complex structures on symplectic reduction spaces $M_{\lambda}$ (see \cite{GS1}).

Recall that the anti-holomorphic Lagrangian sub-bundle $TM^{0,1}_{J} \subset TM \otimes \CC$ is a K\"ahler polarization denoted by $\shP_{J}$. We define $F \subset TM^{\lambda} \otimes \CC$ by 
\begin{equation}F_{p}=(\shP_{J})_{p} \cap (TM^{\lambda} \otimes \CC)_{p},
\end{equation}
for any $p\in M^{\lambda}$. $F$ can be descended to a bundle $\shP_{J,\lambda}$ over the reduction space $M_{\lambda}$, which is a positive-definite Lagrangian sub-bundle of $TM_{\lambda} \otimes \CC$. Under the assumption $(*)$, we have $(\shD_{\CC} \cap \shP_{J})_{p}= F_{p}$, for any $p \in  M^{\lambda}$.

\begin{theorem}\cite[Theorem 3.5]{GS1} There is a positive-definite polarization $\shP_{J,\lambda}$ canonically associated with $\shP_{J}$ on the reduction space $M_{\lambda}$.
\end{theorem}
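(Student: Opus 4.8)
The plan is to reduce the statement to pointwise symplectic linear algebra, exploit the Kähler relation between $J$, the metric $g$ and the moment map $\mu$, and then verify that the resulting fibrewise construction is $T^{n}$-invariant and involutive, so that it descends to $M_{\lambda}$. Throughout I fix $p\in M^{\lambda}$ and write $N_{p}=(\Im d\rho)_{p}$ for the (real) orbit tangent space, spanned by the fundamental vectors $\xi^{\#}_{p}$. Since $T^{n}$ is abelian, $\{\mu^{\xi},\mu^{\eta}\}=0$, so $N_{p}$ is $\omega$-isotropic and is exactly the null space of $\omega|_{T_{p}M^{\lambda}}$. The crucial computation combines the Kähler identity $\omega(\cdot,\cdot)=g(J\cdot,\cdot)$ with $d\mu^{\xi}=-\iota_{\xi^{\#}}\omega$ to give
$$T_{p}M^{\lambda}=\ker d\mu_{p}=(N_{p})^{\omega}=(JN_{p})^{\perp_{g}}.$$
Because $T^{n}$ acts by isometries preserving $J$, the subspace $N_{p}\oplus JN_{p}$ is $J$-invariant with $g|_{N_{p}\oplus JN_{p}}$ positive definite, so its $g$-orthogonal complement $H_{p}:=(N_{p}\oplus JN_{p})^{\perp_{g}}$ is a $J$-invariant (hence $\omega$-nondegenerate) subspace. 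This produces the $g$-orthogonal Kähler decomposition $T_{p}M=H_{p}\oplus N_{p}\oplus JN_{p}$ with $T_{p}M^{\lambda}=H_{p}\oplus N_{p}$.

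Next I would identify $F$. Using the elementary fact $T^{0,1}_{p}M\cap(W\otimes\CC)=(W\cap JW)^{0,1}$ for a real subspace $W$, together with $(H_{p}\oplus N_{p})\cap J(H_{p}\oplus N_{p})=(H_{p}\oplus N_{p})\cap(H_{p}\oplus JN_{p})=H_{p}$, one gets
$$F_{p}=(\shP_{J})_{p}\cap(T_{p}M^{\lambda}\otimes\CC)=H_{p}^{0,1}.$$
In particular $F_{p}$ has constant complex rank $m-n$, so $F$ is a smooth subbundle; it lies in $H_{p}\otimes\CC$ and hence meets the vertical directions trivially, $F_{p}\cap(\shI_{\CC})_{p}=0$. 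Since $(H_{p},\omega|_{H_{p}},J|_{H_{p}})$ is itself a Kähler vector space, the projection $d\pi_{p}$ restricts to a symplectic isomorphism $H_{p}\xrightarrow{\sim}(TM_{\lambda})_{[p]}$ carrying $J|_{H_{p}}$ to a compatible, positive complex structure $J_{\lambda}$ and carrying $F_{p}=H_{p}^{0,1}$ onto $(TM_{\lambda})^{0,1}_{[p]}$. This already yields the positivity for free, because $\shP_{J,\lambda}$ is the Kähler polarization of the genuinely positive-definite $J_{\lambda}$.

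For the descent I would use that $J$ is $T^{n}$-invariant, so $F$ is $T^{n}$-invariant, and $d\pi\circ dg_{t}=d\pi$ shows that $d\pi(F_{p})$ is constant along $T^{n}$-orbits; combined with the transversality $F\cap\shI_{\CC}=0$ this defines a well-defined subbundle $\shP_{J,\lambda}:=d\pi(F)\subset TM_{\lambda}\otimes\CC$. It is Lagrangian, being the image of the Lagrangian $H_{p}^{0,1}$ under a symplectic isomorphism, and positive-definite by the previous paragraph; since the whole construction uses only $\shP_{J}$, $\mu$ and the Kähler data, $\shP_{J,\lambda}$ is canonically associated with $\shP_{J}$.

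The remaining and, I expect, \textbf{most delicate point} is that $\shP_{J,\lambda}$ is a genuine polarization, i.e. involutive. First, $F$ is involutive on $M^{\lambda}$: two sections of $F$ are simultaneously tangent to $M^{\lambda}$ and sections of the integrable distribution $\shP_{J}=T^{0,1}M$, and both tangency to a submanifold and membership in $T^{0,1}M$ are preserved under the Lie bracket, so $[\zeta_{1},\zeta_{2}]\in F$. Then, lifting local sections of $\shP_{J,\lambda}$ to $T^{n}$-invariant sections of $F$ on $M^{\lambda}$ and pushing the bracket down by $d\pi$, the involutivity of $F$ descends to involutivity of $\shP_{J,\lambda}$ (equivalently, $J_{\lambda}$ is integrable and $M_{\lambda}$ is Kähler). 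I expect this descent of involutivity — carefully checking that the bracket of invariant lifts projects correctly modulo the vertical directions $\shI_{\CC}$ — to require the most bookkeeping, whereas the linear-algebraic identification $F_{p}=H_{p}^{0,1}$ and the positivity are immediate once the Kähler relation $T_{p}M^{\lambda}=(JN_{p})^{\perp_{g}}$ is in hand.
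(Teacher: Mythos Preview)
The paper does not give its own proof of this statement: it is quoted verbatim from \cite[Theorem~3.5]{GS1} as background for Section~2.3 and is followed only by a pointer to Definition~4.2 and Lemma~4.3 of \cite{GS1} identifying $\shP_{J,\lambda}$ with $TM_{\lambda}^{0,1}$. There is therefore nothing in the present paper to compare your argument against.

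That said, your proposal is correct and is essentially the original argument of Guillemin and Sternberg. The K\"ahler orthogonal splitting $T_{p}M=H_{p}\oplus N_{p}\oplus JN_{p}$ coming from $T_{p}M^{\lambda}=(JN_{p})^{\perp_{g}}$, the identification $F_{p}=H_{p}^{0,1}$ via the linear-algebra fact $T^{0,1}\cap(W\otimes\CC)=(W\cap JW)^{0,1}$, and the descent of the involutive $F$ through the quotient by the $T^{n}$-invariant null foliation $N$ are exactly the steps carried out in \cite{GS1}. Your expectation that the involutivity descent is the ``most delicate point'' is reasonable, though in practice it is routine: once you lift sections of $\shP_{J,\lambda}$ to $T^{n}$-invariant sections of $F$ (the horizontal lift for the connection with horizontal bundle $H$ does this), $\pi$-relatedness of vector fields passes to brackets, and the involutivity of $F$ on $M^{\lambda}$ immediately gives that of $\shP_{J,\lambda}$ on $M_{\lambda}$.
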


By  Definition 4.2 and Lemma 4.3 in \cite{GS1}, $\shP_{J,\lambda}$ determined a complex structure $J_{\lambda}$ on $M_{\lambda}$ such that 
\begin{equation}
\shP_{J,\lambda}= TM_{\lambda}^{0,1},
\end{equation}
where $TM_{\lambda}^{0,1}$ is the anti-holomorphic sub-bundle of $TM_{\lambda}\otimes \CC$ with respect to $J_{\lambda}$.

\subsection{ Polarizations on K\"ahler manifolds with $T$-symmetry}
In \cite{LW1}, we constructed polarizations $\shP_{\mathrm{mix}}$ on K\"ahler manifolds with $T$-symmetry. Throughout this sections, the existence of pre-quantum line $L$ in $(*)$ is not needed.

For any point $p \in M$, consider the map $\rho_{p}: T^{n} \rightarrow M$ defined by $\rho_{p}(g)= \rho(g)(p)$. Let $\shI_{\RR} \subset TM$ be the singular distribution generated by fundamental vector fields in $\Im d\rho$, that is $(\shI_{\RR})_{p} = \Im d\rho_{p}(e)$. Let $\shD_{\RR} = (\ker d\mu) \subset TM$ be  a distribution defined by the kernel of $d\mu$.  Note that there is a K\"ahler polarization $\shP_{J}=TM^{0,1}_{J}$ associated to the complex structure $J$ on a K\"ahler manifold $(M, \omega, J)$. 
   
\begin{definition}\label{def3-0-1}\cite[Definition]{LW1}
 We define {\em the singular distribution $\shP_{\mathrm{mix}} \subset TM \otimes \CC$} by:
\begin{equation}
\shP_{\mathrm{mix}} = (\shP_{J} \cap\shD_{\CC} ) \oplus \shI_{\CC},
\end{equation}
where $\shD_{\CC} = \shD_{\RR} \otimes \CC$ and $\shI_{\CC} = \shI_{\RR} \otimes \CC$ are the complexification of $\shD_{\RR}$ and $\shI_{\RR}$ respectively.
\end{definition}
 Let $H_{p}$ be the stabilizer of $T^{n}$ at point $p \in M$. Denote by $\check{M}$ the union of $n$-dimensional orbits in $M$, that is,
$$\check{M}= \{p \in M| \dim H_{p} =0\},$$
which is an open dense subset in $M$.
 \begin{theorem}\label{thm2-7}\cite[Theorem 1.1]{LW1}
Under the assumption $(*)$, $\shP_{\mathrm{mix}}$ is a singular polarization and smooth on $\check{M}$. Moreover, $\rank(\shP_{\mathrm{mix}} \cap \bar{\shP}_{mix}  \cap TM)|_{\check{M}}=n$.
\end{theorem}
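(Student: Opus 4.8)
The plan is to verify, on the open dense set $\check{M}$, the three defining properties of a polarization — that $\shP_{\mathrm{mix}}$ is a smooth subbundle of $TM\otimes\CC$ of rank $m$, that it is $\omega$-isotropic, and that it is involutive — and then to identify its real part. The single computation driving almost everything is the following. For $\xi\in\fot$ put $Z_\xi = \xi^{\#} + iJ\xi^{\#}$, which lies in $\shP_J = TM^{0,1}$. Using the moment map relation $d\mu^{\xi} = -\imath_{\xi^{\#}}\omega$, the isotropy of abelian orbits, and the Kähler metric $g(\cdot,\cdot)=\omega(\cdot,J\cdot)$, one computes $\langle d\mu(Z_\xi),\eta\rangle = i\,g(\xi^{\#},\eta^{\#})$ for all $\eta\in\fot$. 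On $\check{M}$ the action is locally free, so the $\xi^{\#}$ are linearly independent and the Gram matrix $g(\xi^{\#},\eta^{\#})$ is positive-definite; hence $\xi\mapsto d\mu(Z_\xi)$ is an isomorphism $\fot\otimes\CC\xrightarrow{\sim}\fot^{*}\otimes\CC$. Two consequences follow at once: the $\CC$-linear extension of $d\mu$ restricts to a \emph{surjection} $\shP_J\to\fot^{*}\otimes\CC$, so $A:=\shP_J\cap\shD_\CC=\ker\big(d\mu|_{\shP_J}\big)$ is a smooth subbundle of constant rank $m-n$ on $\check{M}$; and $\Span_\CC\{Z_\xi\}\cap\shD_\CC=0$, since $\shD_\CC=\ker d\mu$ and no nonzero combination of the $Z_\xi$ is annihilated by $d\mu$.

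Next I would assemble rank and isotropy. Because the torus is abelian the moment map is $T^{n}$-invariant, so differentiating gives $\shI_\RR\subset\shD_\RR$; moreover the standard reduction identity $\shD_\RR=\shI_\RR^{\omega}$ (the $\omega$-orthogonal complement) yields $\omega(\shD_\CC,\shI_\CC)=0$. Writing $B:=\shI_\CC$, isotropy of $\shP_{\mathrm{mix}}=A\oplus B$ is then immediate: $\omega(A,A)=0$ as $A\subset\shP_J$ is Lagrangian, $\omega(B,B)=0$ as abelian orbits are isotropic, and $\omega(A,B)=0$ as $A\subset\shD_\CC$. For directness of the sum I check $A\cap B=0$: a complexified real orbit vector lying in $TM^{0,1}=\{X+iJX\}$ would give a nonzero $R\in\shI_\RR$ with $JR\in\shI_\RR$, impossible since then $\omega(R,JR)=g(R,R)>0$ while $\shI_\RR$ is isotropic. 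Hence $\rank\shP_{\mathrm{mix}}=(m-n)+n=m$, and with isotropy this makes $\shP_{\mathrm{mix}}$ Lagrangian; smoothness on $\check{M}$ follows since $A$ and $B$ are smooth of constant rank there and the sum is transverse.

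For involutivity on $\check{M}$ I would treat the three bracket types separately. Brackets of sections of $B$ stay in $B$, because $[\xi^{\#},\eta^{\#}]=-[\xi,\eta]^{\#}=0$ and the remaining $C^\infty$-linear terms are multiples of fundamental vector fields. The subbundle $A$ is the intersection of the two involutive distributions $\shP_J$ (integrable because $J$ is a complex structure) and $\shD_\CC$ (tangent to the level-set foliation on the regular locus $\check{M}$), hence $A$ is involutive. The crux is the cross bracket $[B,A]$: here I use that each flow of $\xi^{\#}$ acts by holomorphic isometries preserving $\mu$, so $\mathcal{L}_{\xi^{\#}}$ preserves both $\Gamma(\shP_J)$ and $\Gamma(\shD_\CC)$, whence $[\xi^{\#},\Gamma(A)]\subset\Gamma(A)$; the $C^\infty$-linear correction terms land in $B$, giving $[B,A]\subset\shP_{\mathrm{mix}}$. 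This $T$-invariance step, rather than any hard estimate, is the main conceptual point of the argument, and is the place I expect to have to be most careful.

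Finally I would compute the real part. Since $\shD_\RR$ and $\shI_\RR$ are real, $\overline{\shP}_{\mathrm{mix}}=(TM^{1,0}\cap\shD_\CC)\oplus\shI_\CC=\bar A\oplus B$, and both $\shP_{\mathrm{mix}}$ and $\overline{\shP}_{\mathrm{mix}}$ contain $B=\shI_\CC$. Conversely, for $v$ in the intersection, comparing $TM^{0,1}$-components of the two expressions $v=a+s$ ($a\in A$, $s\in B$) and $v=\bar a+t$ ($\bar a\in\bar A\subset TM^{1,0}$, $t\in B$) forces the $A$-component to satisfy $a\in\Span_\CC\{Z_\xi\}\cap\shD_\CC=0$, by the key computation; symmetrically $\bar a=0$, so $v=s\in\shI_\CC$. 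Thus $\shP_{\mathrm{mix}}\cap\overline{\shP}_{\mathrm{mix}}=\shI_\CC$, and intersecting with the real tangent bundle gives $\shI_\CC\cap TM=\shI_\RR$, of real rank $n$. This establishes $\rank\big(\shP_{\mathrm{mix}}\cap\overline{\shP}_{\mathrm{mix}}\cap TM\big)\big|_{\check{M}}=n$, exhibiting $\shP_{\mathrm{mix}}$ as a genuinely mixed polarization with $n$ real and $m-n$ complex directions.
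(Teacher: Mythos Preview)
This theorem is not proved in the present paper: it is quoted verbatim as \cite[Theorem~1.1]{LW1} and used as input. There is therefore no proof here to compare your proposal against.

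That said, your argument is a clean and essentially complete self-contained proof. The key computation $\langle d\mu(Z_\xi),\eta\rangle = \pm i\,g(\xi^{\#},\eta^{\#})$ (the sign depends on conventions and is immaterial) is exactly the right engine: it simultaneously gives the constant rank $m-n$ of $A=\shP_J\cap\shD_\CC$ on $\check M$, the transversality $\Span_\CC\{Z_\xi\}\cap\shD_\CC=0$, and, via your $(0,1)$-projection trick, the identification $\shP_{\mathrm{mix}}\cap\overline{\shP}_{\mathrm{mix}}=\shI_\CC$. The involutivity argument is correct, and in fact your three ingredients --- $\shP_J$ integrable, $\shD_\CC=\ker d\mu$ involutive because $d\mu^\xi([u,v])=u(d\mu^\xi(v))-v(d\mu^\xi(u))=0$, and $\shL_{\xi^{\#}}$ preserving $J$ and $\mu$ --- hold for global sections, not merely on $\check M$; since Definition~\ref{def4-7} requires involutivity of $\Gamma(M,\shP)$, you should phrase that step globally rather than ``on $\check M$''. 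The only other implicit step is the identification of $\check M$ with the regular locus of $\mu$ (used when you call $\shD_\CC$ tangent to the level-set foliation), which is the standard fact that for a Hamiltonian torus action $d\mu_p$ is surjective iff the stabilizer at $p$ is discrete.
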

According to Definition \ref{def4-7}, $\shP_{\mathrm{mix}}$ is a singular real polarization on $M$, when $n=m$, namely, $M$ is toric manifold; $\shP_{\mathrm{mix}}$ is a singular mixed polarization on $M$, when $1 \le n< m$. 
   
 \section{Main results}  
We define the quantum space associated to the polarization $\shP_{\mathrm{mix}}= (\shP_{J} \cap \shD_{\CC}) \oplus \shI_{\CC}$ as follows. 
Let $(L, \nabla, h)$ be the pre-quantum line bundle on $M$. We first recall the definition of quantum space $\shH$ associated to polarization $\shP$ (see \cite{Wo}).
\begin{definition}  {\em The quantum space $\shH$} associated to polarization $\shP$ is the following subspace of $\Gamma_{c}(M,L^{-1})'$: 
$$\shH = \{ \delta \in\Gamma_{c} (M,L^{-1})'\mid  \nabla _{\xi}\delta =0, ~\forall~ \xi \in \Gamma(M,\shP)\},$$
where $\nabla_{\xi}$ is the covariant derivative operator acting on the space of distributional sections defined by equation (\ref{eqcon}).

In our setting, even through the polarization $\shP_{\mathrm{mix}}$ is singular, we continue to use the above definition for the quantum space. We denote it by $\shH_{\mathrm{mix}}$. When $n=m$, $M$ is toric variety and $\shP_{\mathrm{mix}}$ is a singular real polarization. The definition of $\shH_{\mathrm{mix}}$ coincides with the definition of the quantum spaces associated to singular real polarizations studied in \cite{BFMN}.  Moreover, for any $\lambda \in \fot^{*}$, we define {\em the subspace of those sections with supports on $\mu^{-1}(\lambda)$} as:
$$\shH_{\mathrm{mix},\lambda}=\{\delta \in \shH_{\mathrm{mix}} \mid \mathrm{supp} ~\delta \subset \mu^{-1}(\lambda)\}.$$
\end{definition}

 \subsection{Distributional sections in $\shH_{\mathrm{mix}, \lambda}$ associated to sections in $H^{0}(M_{\lambda}, L_{\lambda})$}

In this subsection, we first confirm that for any distributional section $\delta \in \shH_{\mathrm{mix}}$ we have (see Theorem \ref{thm3-0-1}):
$$\supp \delta \subset \bigcup _{\lambda \in \fot^{*}_{\ZZ}} \mu^{-1} (\lambda).$$

After extending the $T^{n}$-action from the space of smooth sections to the space of distributional sections of $L$, we show that $\shH_{\mathrm{mix}, \lambda}$ is a $\lambda$-weight subspace of $\shH_{\mathrm{mix}}$, for any $\lambda \in \fot^{*}_{\ZZ}$. This gives the weight decomposition of $\shH_{\mathrm{mix}}$, i.e. $\shH_{\mathrm{mix}}=\bigoplus_{\lambda \in \fot^{*}_{\ZZ} } \shH_{\mathrm{mix}, \lambda}.$

Inspired by the work on geometric quantizations commute with symplectic reductions by Guillemin and Sternberg in \cite{GS1}, we expect to establish the isomorphism between $H^{0}(M_{\lambda}, L_{\lambda})$ and $ \shH_{\mathrm{mix},\lambda}$, where $(M_{\lambda},L_{\lambda})$ is the symplectic reduction of $M$ at a regular integral level $\lambda$. 

At the end of this subsection, given any holomorphic section $s \in H^{0}(M_{\lambda}, L_{\lambda})$, we define an associated distributional section $\delta^{s} \in \Gamma_{c}(M^{\lambda},(L^{\lambda})^{-1})'$ (see Definition \ref{def4-0-1}) with respect to the volume form $\vol^{\lambda}$.
Then we show that distributional sections in $\shH_{\mathrm{mix}, \lambda}$ associated to sections in $H^{0}(M_{\lambda}, L_{\lambda})$ (see Proposition \ref{pro4-0-4}). That is $\imath(\delta^{s}) \in \shH_{\mathrm{mix}, \lambda}$, where $\imath:\Gamma_{c} (M^{\lambda}, (L^{\lambda})^{-1})' \hookrightarrow \Gamma_{c}(M, L^{-1})' $ is the natural inclusion.

 In order to study the quantum space $\shH_{\mathrm{mix}}$, we recall how to extend covariant differentiation to distributional sections of $L$ (see  \cite{BFMN}). First of all, there is a natural way to embed the space of smooth sections into the space of distributional sections using the Liouville measure $\vol_{M}=\frac{\omega^{m}}{m!}$: 
 \begin{align*}
 \iota: \Gamma(M,L) & \rightarrow  \Gamma_{c} (M, L^{-1})' \\
 s                            & \mapsto( \iota s)(\tau) = \int_{M} \langle s, \tau \rangle \vol_{M}.
\end{align*}
Here $\langle, \rangle: L \times L^{-1} \rightarrow \CC$ is the natural paring between $L$ and $L^{-1}$. Let $\nabla$ be the connection on $L^{-1}$ such that $d\langle s ,\tau \rangle=\langle \nabla s, \tau\rangle+\langle s, \nabla \tau\rangle$. It is necessary to require that the operator $\nabla$ acting on the distributional sections $\iota(s)$ which come from any smooth section $s$ coincides with the operator $\nabla$ acting on $s$, i.e. the following diagram 
$$
\begin{tikzcd}
\Gamma(M,L) \arrow{r} {\iota} \arrow{d}{\nabla_{\xi}} &\Gamma_{c} (M,L^{-1})'\arrow{d}{\nabla_{\xi}} \\
\Gamma(M,L)\arrow{r}{\iota} &  \Gamma_{c} (M,L^{-1})'
\end{tikzcd}
$$
commutes, for any $\xi \in \Gamma(M, TM \otimes \CC)$.
Let $\di \xi$ be the divergence of $\xi$ with respect to $\vol_{M}=\frac{\omega^{m}}{m!}$, equivalently,
$\shL_{\xi}(\vol_{M})= (\di \xi )\vol_{M}$. It can be seen that:
\begin{align*}
0=\int_{M} \shL_{\xi}(\langle s,\tau \rangle\vol_{M}) &= \int_{M} (\shL_{\xi}\langle s,\tau \rangle)\vol_{M} +\int_{M} \langle s,\tau \rangle \shL_{\xi}(\vol_{M})\\                                                                           
 &= \int_{M} \langle \nabla_{\xi}s,\tau \rangle\vol_{M}+\int_{M}\langle s,\nabla_{\xi}\tau \rangle\vol_{M} +\int_{M}\langle s,\tau \rangle (\di \xi)\vol_{M}
                                                                             . \end{align*}
  This gives, for any smooth section $s \in \Gamma(M,L)$ and smooth test section $\tau \in \Gamma_{c}(M,L^{-1})$,
\begin{equation}\label{eqsec} (\nabla_{\xi}\iota(s))(\tau) = \int_{M} \langle \nabla_{\xi}s, \tau \rangle \vol_{M} =\int_{M} \langle s,-((\di\xi) \tau + \nabla \tau) \rangle\vol_{M}.\end{equation}
To determine $\nabla _{\xi} \sigma$ for a general distributional section $\sigma$ not of the form $\iota( s)$, we built its transpose by integrating the operator $\nabla_{\xi}$ by parts.
Namely, $\nabla_{\xi}$ is characterized by its transpose $^{t}\nabla_{\xi}$ as follows: for any $\tau \in \Gamma_{c}(M,L^{-1})$,
\begin{equation}\label{eqcon} (\nabla _{\xi} \sigma)(\tau) = \sigma(^{t}\nabla_{\xi} \tau), ~~~~\text{with}~~~~ ^{t}\nabla_{\xi} \tau = - (\di \xi \tau + \nabla_{\xi} \tau).\end{equation}

Similarly we can extend the $T^{n}$-action on space of smooth sections to space of distributional sections such that the inclusion $\iota: \Gamma(M, L) \hookrightarrow \Gamma_{c}(M, L^{-1})'$ (with respect to the Liouville volume form ${\vol_{M}}$) is $T^{n}$-equivariant. That is, for any $\xi \in \fot$, the following diagram commute.
$$
\begin{tikzcd}
	\Gamma(M, L) \arrow{d}{\xi \cdot} \arrow[hook]{r}{\vol_{M}}& \Gamma_{c}(M, L^{-1})' \arrow{d}{\xi \cdot}\\
		\Gamma(M, L)\arrow[hook]{r}{\vol_{M}} & \Gamma_{c}(M, L^{-1})'
\end{tikzcd}, ~~i.e. ~~~\xi \cdot (\iota(s))= \iota(\xi \cdot s).
$$
Namely, for any $\delta \in \Gamma_{c}(M,L^{-1})'$, $\tau \in \Gamma_{c}(M,L^{-1})$, and  $\xi \in \fot$, $\xi \cdot \delta$ is characterized by: 
\begin{equation}(\xi \cdot \delta ) (\tau)= \delta(\xi \cdot \tau),
\text{with}~~~~ \xi \cdot \tau= \nabla_{\xi^{\#}} \tau + i\mu^{\xi} \tau.
\end{equation}
The $T^{n}$-action on $L$ preserve connection $\nabla$, which implies that $T^{n}$ acts on $\shH_{\mathrm{mix}}$. We obtain the following results.

\begin{theorem}\label{thm3-0-1} Under the assumption $(*)$,    
\begin{enumerate}
\item given any $\delta \in \shH_{\mathrm{mix}}$, we have $ \supp \delta \subset \bigcup _{\lambda \in \fot^{*}_{\ZZ} } \mu^{-1} (\lambda)$. This gives the following decomposition 
$$\shH_{\mathrm{mix}} = \bigoplus_{\lambda \in \fot_{\ZZ}^{*}} \shH_{\mathrm{mix}, \lambda},$$ 
where $\shH_{\mathrm{mix},\lambda}=\{\delta \in \shH_{\mathrm{mix}} \mid \mathrm{supp} ~\delta \subset \mu^{-1}(\lambda)\};$
\item for any  $\lambda \in \fot^{*}_{\ZZ}$, $\shH_{\mathrm{mix},\lambda}$ is a $\lambda$-weight subspace in $\shH_{\mathrm{mix}}$. 
\end{enumerate}
 Therefore the decomposition
$\shH_{\mathrm{mix}} = \bigoplus_{\lambda \in \fot_{\ZZ}^{*}} \shH_{\mathrm{mix}, \lambda}$ is the weight decomposition with respect to $T^{n}$-action.
\end{theorem}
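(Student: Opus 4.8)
The plan is to extract from membership in $\shH_{\mathrm{mix}}$ only the part of the polarization coming from the torus directions: since $\shI_{\CC}\subset\shP_{\mathrm{mix}}$, every $\delta\in\shH_{\mathrm{mix}}$ satisfies $\nabla_{\xi^{\#}}\delta=0$ for all $\xi\in\fot$, and the complementary summand $\shP_{J}\cap\shD_{\CC}$ is irrelevant for this statement. The strategy is to show that, on such distributions, the infinitesimal $T^{n}$-action becomes multiplication by $i\mu^{X}$, to integrate this to the statement that $\exp(tX)$ acts by multiplication by $e^{it\mu^{X}}$, and then to use that this is a \emph{genuine} group action (coming from the $T^{n}$-invariance of $L$) to force an integrality condition on $\supp\delta$.

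First I would record two elementary facts about the fundamental vector field $X^{\#}$ of $X\in\fot$: because the action preserves $\vol_{M}=\omega^{m}/m!$ we have $\di(X^{\#})=0$, and because $X^{\#}=X_{\mu^{X}}$ we have $X^{\#}\mu^{X}=-\omega(X^{\#},X^{\#})=0$. Combining $\di(X^{\#})=0$ with $\nabla_{X^{\#}}\delta=0$ and the transpose formula \eqref{eqcon}, one gets $\delta(\nabla_{X^{\#}}\tau)=0$ for all test sections $\tau$, so the defining formula of the $T^{n}$-action collapses to $(X\cdot\delta)(\tau)=\delta(i\mu^{X}\tau)$, i.e. $X\cdot\delta=i\mu^{X}\delta$ as distributions. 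Now set $\delta_{t}:=\exp(tX)\cdot\delta$. Since the $T^{n}$-action preserves $\nabla$, the space $\shH_{\mathrm{mix}}$ is $T^{n}$-invariant, so $\delta_{t}\in\shH_{\mathrm{mix}}$ and the same collapse applies to each $\delta_{t}$; hence $\tfrac{d}{dt}\delta_{t}=X\cdot\delta_{t}=i\mu^{X}\delta_{t}$. As $e^{it\mu^{X}}\delta$ solves this linear equation with the same initial value (one checks $\nabla_{X^{\#}}(e^{it\mu^{X}}\delta)=0$ using $X^{\#}\mu^{X}=0$), uniqueness gives $\exp(tX)\cdot\delta=e^{it\mu^{X}}\delta$, where $e^{it\mu^{X}}$ denotes multiplication by this smooth function. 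I expect this integration step to be the main technical obstacle, since it is exactly here that one must use the honest group action rather than a mere $\fot$-action and keep $\nabla_{X^{\#}}\delta_{t}=0$ along the flow.

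With this formula in hand, part $(1)$ is quick. For $X\in\fot_{\ZZ}$ we have $\exp(X)=e$, so $\delta_{1}=\delta$, which reads $(e^{i\mu^{X}}-1)\delta=0$. On the open set where $e^{i\mu^{X}}\neq 1$ the smooth factor is invertible, forcing $\delta$ to vanish there, so $\supp\delta\subset\{p:e^{i\mu^{X}(p)}=1\}$; intersecting over a lattice basis of $\fot_{\ZZ}$ and using $\langle\mu(p),X\rangle=-\mu^{X}(p)$ identifies this with $\mu^{-1}(\fot^{*}_{\ZZ})=\bigcup_{\lambda\in\fot^{*}_{\ZZ}}\mu^{-1}(\lambda)$. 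Since $\mu(M)$ is compact, only finitely many $\lambda$ meet $\supp\delta$, and the corresponding level sets are disjoint compacta. Choosing cutoffs $\chi_{\lambda}=f_{\lambda}\circ\mu$ with $f_{\lambda}$ locally constant near $\mu(M)\cap\fot^{*}_{\ZZ}$, equal to $1$ near $\lambda$ and $0$ near the other relevant values, the function $\chi_{\lambda}$ is $T^{n}$-invariant and locally constant near $\supp\delta$, so $\nabla_{\xi}(\chi_{\lambda}\delta)=(\xi\chi_{\lambda})\delta+\chi_{\lambda}\nabla_{\xi}\delta=0$ for every $\xi\in\Gamma(M,\shP_{\mathrm{mix}})$. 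Hence $\delta_{\lambda}:=\chi_{\lambda}\delta\in\shH_{\mathrm{mix},\lambda}$ and $\delta=\sum_{\lambda}\delta_{\lambda}$; disjointness of supports makes the sum direct, giving $\shH_{\mathrm{mix}}=\bigoplus_{\lambda}\shH_{\mathrm{mix},\lambda}$.

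For part $(2)$ I would revisit $\exp(tX)\cdot\delta=e^{it\mu^{X}}\delta$ for $\delta\in\shH_{\mathrm{mix},\lambda}$. On $\supp\delta\subset\mu^{-1}(\lambda)$ the function $\mu^{X}=-\langle\mu,X\rangle$ is constant equal to $-\langle\lambda,X\rangle$, so multiplication by $e^{it\mu^{X}}$ acts on $\delta$ as the scalar $e^{-it\langle\lambda,X\rangle}$; thus $\exp(X)$ acts on all of $\shH_{\mathrm{mix},\lambda}$ by a single character determined by $\lambda$, identifying $\shH_{\mathrm{mix},\lambda}$ with the $\lambda$-weight space of the $T^{n}$-representation. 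Together with part $(1)$ this exhibits $\shH_{\mathrm{mix}}=\bigoplus_{\lambda\in\fot^{*}_{\ZZ}}\shH_{\mathrm{mix},\lambda}$ as the weight decomposition, completing the proof.
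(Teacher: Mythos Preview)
Your argument for part~(1) is correct and is essentially the paper's: both integrate the infinitesimal identity $X\cdot\delta=i\mu^{X}\delta$ along the torus flow (the paper phrases this as a holonomy computation) to obtain $(e^{i\mu^{X}}-1)\delta=0$ for $X\in\fot_{\ZZ}$ and thence the support constraint; your cutoff construction of the direct sum is in fact more explicit than what the paper writes.

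Part~(2), however, has a genuine gap. You assert that since $\mu^{X}$ is constant on $\supp\delta\subset\mu^{-1}(\lambda)$, multiplication by $e^{it\mu^{X}}$ acts on $\delta$ as the scalar $e^{-it\langle\lambda,X\rangle}$. This inference is invalid for distributions: a smooth function equal to a constant $c$ on $\supp\delta$ need \emph{not} act as the scalar $c$ (take $\delta=\delta_{0}'$ on $\RR$ and $f(x)=x$; then $f\delta=-\delta_{0}\neq 0$). What is actually required is the identity $(\mu^{X}-c)\delta=0$, i.e.\ that $\delta$ carries no transverse derivatives along $\mu^{-1}(\lambda)$; this is precisely the content of Corollary~\ref{co3-3} and is not automatic from the support condition alone.

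The gap is fillable within your own framework. From part~(1) you already have $(e^{i\mu^{X}}-1)\delta=0$ for $X\in\fot_{\ZZ}$; since $c:=\mu^{X}|_{\mu^{-1}(\lambda)}$ satisfies $e^{ic}=1$ and $z\mapsto e^{iz}-1$ has only simple zeros, one may factor $e^{i\mu^{X}}-1=(\mu^{X}-c)\,u$ with $u$ smooth and invertible near $\mu^{-1}(\lambda)$, and dividing by $u$ gives $(\mu^{X}-c)\delta=0$; linearity in $X$ extends this to all $X\in\fot$, and $X\cdot\delta=i\mu^{X}\delta=ic\,\delta$ follows. The paper instead argues part~(2) by decomposing each test section $\tau=\sum_{k}\tau_{k}$ into $T^{n}$-weight components, showing $\delta(\tau_{k})=0$ for $k\neq\lambda$ by inverting $\mu^{\xi}-\langle k,\xi\rangle$ near $M^{\lambda}$, and reading off the weight from the surviving term $\tau_{\lambda}$.
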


\begin{proof} 
(1) For a loop $\gamma_{b} \subset T^{n}$ specified by a vector $b\in \fot_{\ZZ}$, for any test function $\tau \in \Gamma_{c}(M,L^{-1}) $, parallel transporting $\tau(p)$ with respect to the connection $\nabla$ around a loop $\gamma_{b} \cdot p \subset M$ results in multiplication of $\tau(p)$ by $e^{-2i\pi \langle \mu(p), b \rangle}$, where $ \langle ,  \rangle: \fot^{*} \times \fot \rightarrow \RR$ is the natural pairing between $\fot^{*}$ and $\fot$. 
The reason is as follows.
Recall given $T^{1}$-equivariant line bundle $L \rightarrow M$ with equivariant curvature $F_{A}+ \mu$, the holonomy around any $T^{1}$-orbit at $p \in M$ is given by $e^{2\pi i\mu(p)}$. 
Applying this to our case, for a loop $\gamma_{b} \subset T^{n}$ specified by $b\in \fot_{\ZZ}$, holonomies of $(L^{-1}, \nabla)$ around the loops in $M$ specified by $b \in \fot_{\ZZ}$ define a smooth function: 
\begin{align*}
f_{b}: M \rightarrow \CC,  p\mapsto f_{b}(p):=e^{-2i\pi \langle \mu(p), b \rangle}.
\end{align*} 
Therefore, $\nabla_{b^{\#}} \tau=0$ implies $f_{b}\cdot \tau=\tau$. By transporting this to the dual space, we have $f_{b} \cdot \delta =\delta$ for any $\delta \in \Gamma_{c}(M,L^{-1})'$ satisfying $\nabla_{b^{\#}} \delta=0$.
For $\delta \in \shH_{\mathrm{mix}}$, we have  $\nabla_{\xi^{\#}}\delta=0, \forall \xi \in \fot$, in particular $\nabla_{b^{\#}}\delta=0, \forall b \in \fot_{\ZZ}$.
This implies that $f_{b}$ is constant 1 on $\supp \delta$, for any $b\in \fot_{\ZZ}$.
Therefore we conclude that $\delta$ should be supported in the set where $\mu$ takes integral value. That is,
$$\supp \delta \subset \bigcup _{\lambda \in  \fot^{*}_{\ZZ}} \mu^{-1} (\lambda).$$
To prove (2), given any $\lambda \in \fot_{\ZZ}^{*}$ and $\delta \in \shH_{\mathrm{mix}, \lambda}$, we need to show for any $\tau \in \Gamma_{c}(M, L^{-1})$ and $\xi \in \fot$, 
$$
(\xi \cdot \delta)(\tau)= i \langle\lambda, \xi\rangle \delta(\tau),$$ where $\langle, \rangle : \fot^{*} \times \fot \rightarrow \RR$ is the natural pairing. Note that the Liouville volume form $\vol_{M}$ is $T^{n}$-invariant, $\di \xi^{\#}=0$. This implies,
\begin{equation}\label{eq2-7}
(\nabla_{\xi^{\#}}\delta)(\tau) =-\delta(\nabla_{\xi^{\#}} \tau)=0, ~~ \text{and} ~~(\xi \cdot \delta)(\tau)=-\delta(\xi \cdot \tau).
\end{equation}
Recall that $\xi \cdot \tau= \nabla_{\xi^{\#}} \tau + i \mu^{\xi} \tau$.
By equation (\ref{eq2-7}), we have \begin{equation}\label{eq3-1-5}
(\xi \cdot \delta)(\tau_{k})= -\delta(\xi \cdot \tau_{k})= -\delta( \nabla_{\xi^{\#}} \tau_{k} + i\mu^{\xi} \tau_{k})= -\delta( i\mu^{\xi} \tau_{k}).
\end{equation}
Suppose $\tau= \tau_{k} \in \Gamma_{c}(M, L^{-1})$ has weight $k$, i.e. 
 $\xi \cdot \tau_{k}=i\langle k, \xi \rangle \tau_{k}$, by equation (\ref{eq2-7}), one has 
\begin{equation}\label{eq3-1-6}
(\xi \cdot \delta)(\tau_{k})= -\delta(\xi \cdot \tau_{k})= -\delta( i\langle k, \xi \rangle \tau_{k}).
\end{equation}
Combine equations (\ref{eq2-7}) and (\ref{eq3-1-6}), we obtain
\begin{equation}\label{eq3-1-8}
\delta( i(\mu^{\xi}-\langle k, \xi \rangle ) \tau_{k})=0, \forall \xi \in \fot.
\end{equation}

 For any $k \ne \lambda$, there exists $\xi \in \fot$, such that $\langle \lambda, \xi \rangle \ne  \langle k, \xi \rangle$. For such $\xi$, as $\mu^{\xi}|_{M^{\lambda}}= \langle \lambda, \xi \rangle$, $\mu^{\xi}- \langle k, \xi \rangle$ is no-where vanishing on a $T^{n}$-invariant open neighbourhood of $M^{\lambda}$. One has:
\begin{equation}\label{eq3-1-9}
\delta(\tau_{k})=\delta( i(\mu^{\xi} - \langle k, \xi \rangle)  \frac{1}{ i(\mu^{\xi} - \langle k, \xi \rangle)}\tau_{k}).
\end{equation}
Since the moment map is $T^{n}$-invariant, $\frac{1}{ i(\mu^{\xi} - \langle k, \xi \rangle)}\tau_{k}$ still has weight $k$. Hence, by the above discussion (i.e. in equation \ref{eq3-1-8}, replacing $\tau_{k}$ by $\frac{1}{ i(\mu^{\xi} - \langle k, \xi \rangle)}\tau_{k}$), one has:
 \begin{equation} \label{eq3-1-7}
 \delta(\tau_{k})=0.
 \end{equation}

Given the weight decomposition of $\tau= \sum_{k}\tau_{k}$ (i.e. $\xi \cdot \tau_{k}=i \langle k, \xi \rangle \tau_{k}$), by linearity of $\delta$ and equation (\ref{eq3-1-7}),
we obtain:
\begin{align*}(\xi \cdot \delta)(\tau)= i\langle \lambda, \xi \rangle\delta(\tau_{ \langle \lambda, \xi \rangle}) =
i \langle \lambda, \xi \rangle\delta(\tau).
\end{align*}
Therefore we have: $\xi \cdot \delta=i\langle \lambda, \xi \rangle \delta$.
\end{proof}
The next corollary says that any element in $\shH_{\mathrm{mix}, \lambda}$ is locally a delta function along $\mu^{-1}(\lambda)$, and does not involve any derivative of delta functions.
\begin{corollary}\label{co3-3}
For any $\lambda \in \fot^{*}_{\ZZ}$, $\delta \in \shH_{\mathrm{mix}, \lambda}$, and any test section $\tau \in \Gamma_{c}(M, L^{-1})$ satisfying $\tau|_{M^{\lambda}}=0$, we have
\begin{equation}
\delta(\tau)=0.
\end{equation}
\end{corollary}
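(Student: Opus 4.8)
The plan is to read the statement off Theorem \ref{thm3-0-1} almost directly, since both of its parts already encode how $\delta$ interacts with the torus weights. Concretely, $\delta$ is supported on $M^{\lambda}=\mu^{-1}(\lambda)$, it has weight $\lambda$, and the two identities extracted in the proof of that theorem are exactly what the corollary needs: equation (\ref{eq3-1-8}) says that for a weight-$k$ test section $\sigma_{k}$ one has $\delta\big((\mu^{\xi}-\langle k,\xi\rangle)\sigma_{k}\big)=0$ for every $\xi\in\fot$, while equation (\ref{eq3-1-7}) says $\delta(\sigma_{k})=0$ whenever $k\neq\lambda$. So the whole task is to convert the hypothesis $\tau|_{M^{\lambda}}=0$ into the algebraic statement that the moment components $\mu^{\xi}-\langle\lambda,\xi\rangle$ can be divided out of $\tau$, after which these two identities close the argument.

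First I would split $\tau=\sum_{k}\tau_{k}$ into its $T^{n}$-weight components. Because $M^{\lambda}$ is $T^{n}$-invariant and the lifted action sends the fibre of $L$ over $g^{-1}\!\cdot p$ to the fibre over $p$, each $g\cdot\tau$ vanishes on $M^{\lambda}$ again; averaging against the characters then shows every $\tau_{k}$ vanishes on $M^{\lambda}$. By (\ref{eq3-1-7}) the components with $k\neq\lambda$ contribute nothing to $\delta(\tau)$, so it suffices to handle a single weight-$\lambda$ test section $\tau_{\lambda}$ with $\tau_{\lambda}|_{M^{\lambda}}=0$.

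Next, since the claim is local along $\supp\delta\subset M^{\lambda}$, I would work near each point of $M^{\lambda}$. For $\lambda\in\fot^{*}_{\ZZ,\mathrm{reg}}$ the differentials of $\mu^{\xi_{1}},\dots,\mu^{\xi_{n}}$, for a basis $\xi_{1},\dots,\xi_{n}$ of $\fot$, are independent along $M^{\lambda}$, so the functions $f_{j}=\mu^{\xi_{j}}-\langle\lambda,\xi_{j}\rangle$ serve as coordinates transverse to $M^{\lambda}$. Hadamard's lemma then gives $\tau_{\lambda}=\sum_{j}f_{j}\,\sigma_{j}$ with $\sigma_{j}$ smooth, and a $T^{n}$-invariant partition of unity over the compact set $\supp\delta$ globalises this with compactly supported $\sigma_{j}$. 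Expanding each $\sigma_{j}$ into weights, every term $f_{j}(\sigma_{j})_{k}$ has weight $k$: those with $k\neq\lambda$ are annihilated by (\ref{eq3-1-7}), while the weight-$\lambda$ term $f_{j}(\sigma_{j})_{\lambda}=(\mu^{\xi_{j}}-\langle\lambda,\xi_{j}\rangle)(\sigma_{j})_{\lambda}$ is annihilated by (\ref{eq3-1-8}). Summing, and using continuity of $\delta$ on the $C^{\infty}$-convergent weight series, yields $\delta(\tau_{\lambda})=0$ and hence $\delta(\tau)=0$.

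The step I expect to be the main obstacle is precisely the factorisation $\tau_{\lambda}=\sum_{j}f_{j}\sigma_{j}$ through the specific moment components $f_{j}$, since this is where regularity of $\lambda$ is really used: it is what makes the $f_{j}$ transverse coordinates, so that $M^{\lambda}$ is the transversal common zero locus cut out by the ideal $(f_{1},\dots,f_{n})$. For a non-regular $\lambda$ the $df_{j}$ degenerate along $M^{\lambda}$ and that ideal can be strictly smaller than the full ideal of functions vanishing on $M^{\lambda}$; there one would have to invoke real-analyticity of $\mu$ on the K\"ahler manifold $M$ (a Lojasiewicz-type division) or simply restrict the claim to the regular locus used in the later sections. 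I would also take care that the partition of unity and the weight projection are performed $T^{n}$-equivariantly, so that the intermediate sections $\sigma_{j}$ stay within the class of compactly supported test sections.
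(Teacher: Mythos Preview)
Your approach is essentially the paper's: weight-decompose $\tau$, kill the $k\neq\lambda$ components via (\ref{eq3-1-7}), observe that each $\tau_{k}$ still vanishes on $M^{\lambda}$ by $T^{n}$-invariance, then factor $\tau_{\lambda}$ through the moment components and invoke (\ref{eq3-1-8}). The only substantive difference is at the factorization step. The paper simply asserts that $\tau_{\lambda}=i(\mu^{\xi}-\langle\lambda,\xi\rangle)\tau'_{\lambda}$ for \emph{some} single $\xi\in\fot$ with $\tau'_{\lambda}$ of weight $\lambda$; you instead invoke Hadamard's lemma with all $n$ transverse functions $f_{j}=\mu^{\xi_{j}}-\langle\lambda,\xi_{j}\rangle$ and then project back to weight $\lambda$. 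For $n>1$ your version is the more honest one, since vanishing on $M^{\lambda}$ does not a priori force divisibility by a single affine moment component. Your caveat about non-regular $\lambda$ is also well placed: the corollary is stated for all $\lambda\in\fot^{*}_{\ZZ}$, but the paper's factorization is just as opaque off the regular locus as yours, and since the corollary is only applied for $\lambda\in\fot^{*}_{\ZZ,\mathrm{reg}}$ in the proof of Theorem~\ref{thm4-0-11}, restricting to that case costs nothing.
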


\begin{proof}
 For any $\tau \in \Gamma_{c}(M, L^{-1})$ satisfying $\tau|_{M^{\lambda}}=0$, let $\tau = \sum_{k} \tau_{k}$ be its weight decomposition, where $\tau_{k}=\int_{e^{it} \in T^{n}}(e^{it}\cdot \tau ) e^{-ikt}dt$.  By Theorem \ref{thm3-0-1}, $\delta$ has weight $\lambda$ with respect to $T^{n}$-action. This implies, for $k \ne \lambda$,
 \begin{align*}
 \delta(\tau_{k})
=0.
\end{align*}
Note that $$\tau_{k}(p)=\int_{e^{it} \in T^{n}}(e^{-it}\cdot \tau )(p) e^{ikt}dt=\int_{e^{it} \in T^{n}} \tau (e^{-it}\cdot p) e^{ikt}dt.$$
For any $p \in M^{\lambda}$ and $t \in \fot$, $e^{it}\cdot p \in M^{\lambda}$ since $M^{\lambda}$ is $T^{n}$-invariant.
  Therefore $\tau_{k}|_{M^{\lambda}}=0$, as $\tau|_{M^{\lambda}}=0$.
In particular, $\tau_{\lambda}|_{M^{\lambda}}=0$. So there exists weight $\lambda$ test section $\tau'$ such that $\tau_{\lambda}=i(\mu^{\xi}- \langle \lambda, \xi \rangle)\tau_{\lambda}'$ for some $\xi \in \fot$.
Hence, for $k=\lambda$, by equation (\ref{eq3-1-8}), we obtain
\begin{equation}
\delta(\tau_{\lambda})=\delta(i(\mu^{\xi}-\langle \lambda, \xi \rangle)\tau'_{\lambda})=0.
\end{equation}
Therefore, by the linearity of $\delta$, we have $\delta(\tau)=0$.
\end{proof}
In order to establish the isomorphism between $H^{0}(M_{\lambda}, L_{\lambda})$ and $ \shH_{\mathrm{mix},\lambda}$, where $(M_{\lambda},L_{\lambda})=(M, L)//_{\lambda}T$ is the symplectic reduction of $M$ at a regular integral level $\lambda$.
We first define the distributional section $\delta^{s}\in \Gamma_{c}(M^{\lambda},(L^{\lambda})^{-1})'$ on $M^{\lambda} \subset M$ associated to $s \in H^{0}(M_{\lambda}, L_{\lambda})$ as follows. 
 \begin{definition}\label{def4-0-1}
For any  $\lambda \in \fot^{*}_{\ZZ, \mathrm{reg}}$ and $s \in H^{0}(M_{\lambda}, L_{\lambda})$, we define the {\em distributional section $\delta^{s} \in \Gamma_{c}(M^{\lambda},(L^{\lambda})^{-1})'$ associated to $s$} as follows: for any $\tau \in \Gamma_{c}(M^{\lambda},(L^{\lambda})^{-1})$,
\begin{equation} \label{eq3-1-13}
\delta^{s} (\tau)= \int_{M^{\lambda}} \left\langle \pi^{*}s,\tau \right\rangle \vol^{\lambda}.
\end{equation}
\end{definition}

In fact, $\delta^{s}=\iota(\pi^{*}s)$, under the embedding
 \begin{tikzcd}
 	\iota: \Gamma(M^{\lambda}, L^{\lambda}) \arrow[hook]{r}{\vol^{\lambda}}& \Gamma_{c}(M^{\lambda}, (L^{\lambda})^{-1})'
 \end{tikzcd} defined by $\sigma \mapsto( \iota \sigma)(\tau) = \int_{M} \langle \sigma, \tau \rangle \vol^{\lambda}
$ with respect to $\vol^{\lambda}$. Note that $\imath(\delta^{s}) \in \Gamma_{c}(M, L^{-1})'$, where $\imath:\Gamma_{c} (M^{\lambda}, (L^{\lambda})^{-1})' \hookrightarrow \Gamma_{c}(M, L^{-1})'$ is a natural inclusion defined by:
\begin{equation}
(\imath(\delta))(\tau)= \delta(\tau|_{M^{\lambda}}),  \forall \delta \in \Gamma_{c} (M^{\lambda}, (L^{\lambda})^{-1})'.
\end{equation}

 In order to show $\imath(\delta^{s}) \in \shH_{\mathrm{mix}, \lambda}$, we first need to extend covariant derivative $\nabla_{\xi}$ on the space of smooth sections to the space of distributional sections of $L^{\lambda}$ with respect to $\vol^{\lambda}$ as before. That is, for any $\sigma \in \Gamma_{c}(M^{\lambda}, (L^{\lambda})^{-1})'$, and $\xi \in \Gamma(M^{\lambda}, TM^{\lambda} \otimes \CC)$, \begin{equation} (\nabla _{\xi} \sigma)(\tau) = \sigma(^{t}\nabla_{\xi} \tau), ~~~~\text{with}~~~~ ^{t}\nabla_{\xi} \tau = - (\di \xi \tau + \nabla_{\xi} \tau),\end{equation}
where $\di^{2} \xi= \frac{\shL_{\xi}\vol^{\lambda}}{\vol^{\lambda}}$.

In particular, $\imath(\delta^{s}) \in \Gamma_{c}(M, L^{-1})'$. 
 we also need to study the relationship between the covariant derivative on the space of distributional sections of $L^{\lambda}$ and the space of distributional sections of $L$.
We expect the following diagram
$$
\begin{tikzcd}
	\Gamma(M, L) \arrow{d} \arrow[hook]{r}{\vol_{M}}& \Gamma_{c}(M, L^{-1})'  \arrow{r}{\nabla_{\xi}}& \Gamma_{c}(M, L^{-1})'\\
	\Gamma(M^{\lambda}, L^{\lambda}) \arrow[hook]{r}{\vol^{\lambda}} &\Gamma_{c}(M^{\lambda}, (L^{\lambda})^{-1})' \arrow[hook]{u}{\imath} \arrow{r}{\nabla_{\xi}}& \Gamma_{c}(M^{\lambda}, (L^{\lambda})^{-1})' \arrow[hook]{u}{\imath},
\end{tikzcd}
$$
commute, for any $\xi \in \Gamma(M, TM \otimes \CC)$ satisfying $\xi|_{M^{\lambda}} \in \Gamma(M^{\lambda}, TM^{\lambda} \otimes \CC)$.
In order to show that the above diagram commute, we use the coisotropic embedding theorem due to Weinstein \cite{We1} and further studied by Guillemin in \cite{Gui1} to (relate $\vol_{M}$ and $\vol^{\lambda}$) show the following lemma. 
\subsubsection{Restriction commutes with taking divergence}
 For any  $\xi \in \Gamma(M, TM \otimes \CC)$, we denote the restriction of the divergence of $\xi$ (with respect to $\vol_{M}$) to $M^{\lambda}$ by $\di_{\xi}^{1}$ and denote the divergence of $\xi|_{M^{\lambda}}$ (with respect to $\vol^{\lambda}$) by $\di_{\xi}^{2}$ i.e. $$\di_{\xi}^{1}=\frac{ d(i_{\xi}  \vol_{M})}{\vol_{M}}|_{M^{\lambda}}, 
~~ \text{and} ~~\di^{2} \xi  = \frac{d(i_{\xi|_{M^{\lambda}}} \vol^{\lambda})}.{\vol^{\lambda}}.$$

\begin{lemma} \label{lem4-0-1}
Under the assumption $(*)$, for any $\xi \in \Gamma(M, \shP_{\mathrm{mix}})$ and $\lambda \in \fot^{*}_{\mathrm{reg}}$, we have $$\di^{1} \xi = \di^{2} \xi,$$
as functions on $M^{\lambda}$.
\end{lemma}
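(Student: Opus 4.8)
The plan is to reduce the identity to a local computation in a tubular neighbourhood of $M^{\lambda}$, where $\vol_{M}$ and $\vol^{\lambda}$ can be compared explicitly by the coisotropic embedding theorem, and then to exploit the fact that every section of $\shP_{\mathrm{mix}}$ is tangent to the level sets of $\mu$.

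First I would record the crucial algebraic observation about $\shP_{\mathrm{mix}}$. Since $\shP_{\mathrm{mix}}=(\shP_{J}\cap\shD_{\CC})\oplus\shI_{\CC}$ with $\shD_{\CC}=(\ker d\mu)\otimes\CC$, the first summand lies in $(\ker d\mu)\otimes\CC$ by definition; and because the torus is abelian the moment map is $T^{n}$-invariant, so $d\mu(\eta^{\#})=0$ for every fundamental vector field, whence $\shI_{\CC}\subseteq(\ker d\mu)\otimes\CC$ as well. Consequently $\shP_{\mathrm{mix}}\subseteq(\ker d\mu)\otimes\CC$, i.e. $d\mu(\xi)=0$ identically near $M^{\lambda}$ for every $\xi\in\Gamma(M,\shP_{\mathrm{mix}})$. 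In particular such a $\xi$ is tangent not only to $M^{\lambda}$ but to all nearby level sets of $\mu$.

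Second, I would invoke the coisotropic (Marle--Guillemin--Sternberg) normal form near the regular level $M^{\lambda}$. Since $\lambda$ is regular and $T^{n}$ acts freely, a neighbourhood of $M^{\lambda}$ is equivariantly symplectomorphic to a neighbourhood of the zero section in $M^{\lambda}\times\fot^{*}$, with moment map $\mu=\lambda+x$ (so the $\fot^{*}$-coordinate $x$ measures the transverse, momentum directions) and $\omega=\pi^{*}\omega_{\lambda}+d\langle x,\alpha\rangle$ for the chosen connection $\alpha$. Expanding $\omega^{m}/m!$ and keeping only the top-degree term, which must use all $n$ transverse factors $dx_{a}$, I obtain $\vol_{M}=H\cdot\bigl(p_{1}^{*}\vol^{\lambda}\wedge dx\bigr)$ near $M^{\lambda}$, where $dx=dx_{1}\wedge\cdots\wedge dx_{n}$, $p_{1}$ is the projection to $M^{\lambda}$, and $H$ is a smooth nonvanishing function. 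The content of the normal form is precisely that the restriction $H|_{M^{\lambda}}$ is constant, since the horizontal factor of $\omega^{m}$ reduces at $x=0$ to $\pi^{*}\omega_{\lambda}^{m-n}$ and the remaining factors assemble into $\vol^{\lambda}\wedge dx$; this is the step that relates $\vol_{M}$ and $\vol^{\lambda}$ and is where the embedding theorem does its work.

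Finally I would compute both divergences in adapted coordinates $(u,x)$, with $u$ coordinates on $M^{\lambda}$ and $\vol^{\lambda}=\sigma(u)\,du$. By the first step $\xi=\sum_{i}a_{i}(u,x)\,\partial_{u_{i}}$ has no $\partial_{x}$-component, so using $\di_{\Omega}\xi=\Omega^{-1}\shL_{\xi}\Omega$ one finds $\di^{1}\xi=(H\sigma)^{-1}\sum_{i}\partial_{u_{i}}(H\sigma a_{i})\big|_{x=0}$. Because $H|_{x=0}$ is constant its $u$-derivatives drop out, leaving $\di^{1}\xi=\sigma^{-1}\sum_{i}\partial_{u_{i}}(\sigma a_{i})\big|_{x=0}$, which is exactly $\di^{2}\xi=\sigma^{-1}\sum_{i}\partial_{u_{i}}\bigl(\sigma\,a_{i}(u,0)\bigr)$ since differentiation in $u$ commutes with setting $x=0$. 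I expect the main obstacle to be the second step: proving that $\vol_{M}$ and $p_{1}^{*}\vol^{\lambda}\wedge dx$ agree up to a constant along $M^{\lambda}$, equivalently that $H|_{M^{\lambda}}$ is constant, which requires the explicit coisotropic normal form of $\omega$ and a careful bookkeeping of the horizontal, vertical, and transverse factors in $\omega^{m}$. Once this comparison is in hand, the tangency $\shP_{\mathrm{mix}}\subseteq\ker d\mu$ makes the remaining computation routine.
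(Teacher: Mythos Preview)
Your proposal is correct and follows essentially the same strategy as the paper: both arguments rest on (i) the inclusion $\shP_{\mathrm{mix}}\subseteq\shD_{\CC}=(\ker d\mu)\otimes\CC$, so that $\xi$ has no transverse component, and (ii) the coisotropic embedding theorem to write $\vol_{M}$ in a product form near $M^{\lambda}$ and compare it to $\vol^{\lambda}$. The only packaging difference is that you isolate the statement ``$H|_{M^{\lambda}}$ is constant'' and then run a coordinate computation, whereas the paper writes $\vol_{M}=\frac{1}{m!}(i^{*}\omega+t\,d\alpha)^{m-1}\wedge\alpha\wedge dt$ explicitly and expands $\shL_{\xi}\vol_{M}$ directly, observing that the ratio reduces at $t=0$ to the corresponding ratio for $\vol^{\lambda}$; these are the same computation in two presentations.
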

\begin{proof}
Without loss of generality, we assume $\lambda=0$ and $n=1$.
In order to show that $\di^{1} \xi = \di^{2} \xi$,
we shall first relate the volume form $\vol_{M}$ of $M$ and the volume form $\vol_{M^{\lambda}}$ of $M^{\lambda}$.
Taking a principal $T^{1}$-connection $\alpha \in \Omega^{1}(M^{0}, \fot)$ on $M^{0}$, choose a basis $\xi_{1}$ of $\fot$ and denote the corresponding dual basis of $\fot^{*}$ by $\xi_{1}^{*}$ with coordinate function $t$. In terms of $\xi_{1}$, we write
$\alpha= \xi_{1}\otimes \alpha_{1}$, where $\alpha_{1}$ is a scalar valued form. By abuse of notations, we denote $\alpha_{1}$ by $\alpha$.
Consider $M^{0}$ as a submanifold of $M^{0} \times \fot^{*}$ via the embedding $$i_{0}: M^{0} \rightarrow M^{0} \times \fot^{*}, i_{0}(p)=(p,0).$$ 
The two-form $$\tilde{\omega}= \pi^{*} \omega_{0} + d(t \alpha)$$ is symplectic on a neighbourhood $U$ of $ M^{0}$ in $M^{0} \times \fot^{*}$ and satisfies $i_{0}^{*} \tilde{\omega}=\pi^{*}\omega_{0}$.

 Note that $(dt\wedge \alpha)^{2}=0$ and $(i^{*}\omega)^{m}=0$.  
Then we restrict our attention to show $\shL_{\xi} t=0,~~ \forall~ \xi \in \Gamma(M, \shP_{\mathrm{mix}}).$ We extend the $T^{1}$-action on $M^{0}$ to $M^{0} \times \fot^{*}$ in a trivial manner.
%Let $T^{1}$ act on $ M^{0} \times \fot^{*}$, acting on $\fot^{*}$ by its coadjoint action and on $M^{0}$ by its given action. 
Then $\tilde{\omega}$ is $T^{1}$-invariant and the action of $T^{1}$ on $ M^{0} \times \fot^{*}$ is Hamiltonian with moment map
$$\mu_{0}:  M^{0} \times \fot^{*} \rightarrow \fot^{*}, (p, t) \mapsto t.$$ 
According to Theorem 2.2 of \cite{Gui1}, in a neighborhood $U$ of $ M^{0}$, the Hamiltonian $T^{1}$-spaces $(M, \omega)$ and $ (M^{0} \times \fot^{*}, \tilde{\omega})$ are isomorphic (see Appendix).
This gives $\shL_{\zeta} t=0$, for any $\zeta \in \Gamma(U,\shD_{\CC})$ and 
 $$\vol_{M} =\frac{1}{m!} (i^{*}\omega + td\alpha)^{m-1} \wedge \alpha \wedge dt,$$
 in a neighbourhood $U$ of $M^{0}$. As $\shP_{\mathrm{mix}} \subset \shD_{\CC}$, it is obvious that $\Gamma(U, \shP_{\mathrm{mix}}) \subset \Gamma(U,\shD_{\CC})$. This gives us that, for any $ \xi \in \Gamma(U, \shP_{\mathrm{mix}})$,
 \begin{equation}\shL_{\xi} t=0. \end{equation} 
It follows that:
$$
\shL_{\xi} (i^{*}\omega + td\alpha)^{m-1}=(m-1)(\shL_{\xi} (i^{*}\omega) + t\shL_{\xi} d\alpha) \wedge(i^{*}\omega + td\alpha)^{m-2},~ \forall \xi \in \Gamma(U, \shP_{\mathrm{mix}}).
$$

Therefore, we obtain:
\begin{align*}
&\frac{1}{\vol_{M}} d(i_{\xi} \vol_{M})=\frac{1}{\vol_{M}} \shL_{\xi} \vol_{M}=\frac{1}{\vol_{M}}\frac{1}{m!} \shL_{\xi}( (i^{*}\omega + td\alpha)^{m-1}\wedge \alpha \wedge dt) \\
&=\frac{1}{\vol_{M}}\frac{1}{m!} (\shL_{\xi} (i^{*}\omega + td\alpha)^{m-1}\wedge \alpha \wedge dt +(i^{*}\omega + td\alpha)^{m-1} \wedge \shL_{\xi}\alpha \wedge dt) \\
&=\frac{(m-1)(\shL_{\xi} i^{*}\omega + t\shL_{\xi} d\alpha) \wedge(i^{*}\omega + td\alpha)^{m-2}\wedge \alpha  +(i^{*}\omega + td\alpha)^{m-1} \wedge \shL_{\xi}\alpha )
}{ (i^{*}\omega + td\alpha)^{m-1} \wedge \alpha }.
\end{align*}
Recall that $\vol^{0}= \frac{1}{(m-1)!} (i^{*}\omega )^{m-1} \wedge \alpha$. By abuse of notation, $i_{\xi} \vol^{0}$ means $i_{\xi|_{M^{0}}} \vol^{0}$. Then by a straight computation,
\begin{align*}
\di^{2} \xi &= \frac{1}{\vol^{0}} d(i_{\xi} \vol^{0})=\frac{1}{\vol^{0}} \shL_{\xi} \vol^{0}\\
&=\frac{1}{\vol^{0}}\frac{1}{(m-1)!} \shL_{\xi}( (i^{*}\omega )^{m-1}\wedge \alpha ) \\
&=\frac{(m-1)(\shL_{\xi} i^{*}\omega ) \wedge(i^{*}\omega )^{m-2}\wedge \alpha  +(i^{*}\omega )^{m-1} \wedge \shL_{\xi}\alpha )}{(i^{*}\omega )^{m-1} \wedge \alpha}. 
\end{align*}
Therefore, for $\xi \in \Gamma(M, \shP_{\mathrm{mix}})$, we have:
$$\di^{1} \xi =\left(\frac{1}{\vol_{M}} d\left(i_{\xi} \vol_{M}\right)\right)|_{M^{0}}=\left(\frac{1}{\vol_{M}} d\left(i_{\xi} \vol_{M}\right)\right)|_{t=0}=\di^{2} \xi.$$
\end{proof}

Then we obtain the following theorem:
\begin{theorem} \label{thm3-5}For any $\lambda \in \fot^{*}_{\ZZ, \mathrm{reg}} $, $\delta \in \Gamma_{c}(M^{\lambda}, (L^{\lambda})^{-1})' $ and $\xi \in \Gamma(M, TM \otimes \CC)$ satisfying $\xi|_{M^{\lambda}} \in \Gamma(M^{\lambda}, TM^{\lambda} \otimes \CC)$, we have
\begin{equation}\label{eq2-1}
\nabla_{\xi}(\imath(\delta))=\imath(\nabla_{\xi}\delta),
\end{equation}
where $\imath:\Gamma_{c} (M^{\lambda}, (L^{\lambda})^{-1})' \hookrightarrow \Gamma_{c}(M, L^{-1})' $ is the natural inclusion.
\end{theorem}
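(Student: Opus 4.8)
The plan is to peel off both sides using the transpose definition of the covariant derivative on distributional sections together with the definition of the inclusion $\imath$, and thereby reduce the identity (\ref{eq2-1}) to a pointwise comparison of two transpose operators along $M^{\lambda}$. Concretely, fix a test section $\tau \in \Gamma_{c}(M, L^{-1})$. On the left-hand side,
\begin{align*}
(\nabla_{\xi}(\imath(\delta)))(\tau) = (\imath(\delta))({}^{t}\nabla_{\xi}\tau) = \delta\big(({}^{t}\nabla_{\xi}\tau)|_{M^{\lambda}}\big) = -\delta\big((\di^{1}\xi)\,\tau|_{M^{\lambda}} + (\nabla_{\xi}\tau)|_{M^{\lambda}}\big),
\end{align*}
while on the right-hand side
\begin{align*}
(\imath(\nabla_{\xi}\delta))(\tau) = (\nabla_{\xi}\delta)(\tau|_{M^{\lambda}}) = \delta\big({}^{t}\nabla_{\xi}(\tau|_{M^{\lambda}})\big) = -\delta\big((\di^{2}\xi)\,\tau|_{M^{\lambda}} + \nabla_{\xi|_{M^{\lambda}}}(\tau|_{M^{\lambda}})\big).
\end{align*}
Thus it suffices to show that the two arguments of $\delta$ agree as smooth sections of $(L^{\lambda})^{-1}$ over $M^{\lambda}$; I would handle the connection terms and the divergence terms separately.

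For the connection terms, the hypothesis $\xi|_{M^{\lambda}} \in \Gamma(M^{\lambda}, TM^{\lambda}\otimes\CC)$ is exactly what is needed. Since the covariant derivative at a point $p \in M^{\lambda}$ depends only on $\xi(p)$ and the $1$-jet of $\tau$ in the direction $\xi(p) \in T_{p}M^{\lambda}$, and since the connection on $L^{\lambda}=i^{*}L$ is $i^{*}\nabla$ by Theorem \ref{thm2-0-1}, one obtains $(\nabla_{\xi}\tau)|_{M^{\lambda}} = \nabla_{\xi|_{M^{\lambda}}}(\tau|_{M^{\lambda}})$. Hence the two connection contributions cancel, and the whole identity collapses to the single requirement
\begin{align*}
\delta\big((\di^{1}\xi - \di^{2}\xi)\,\tau|_{M^{\lambda}}\big) = 0 \quad \text{for all } \tau.
\end{align*}

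It therefore remains to compare the divergence of $\xi$ computed on $M$ (restricted to $M^{\lambda}$) with the divergence of $\xi|_{M^{\lambda}}$ computed on $M^{\lambda}$; this is precisely the content of Lemma \ref{lem4-0-1}, which gives $\di^{1}\xi = \di^{2}\xi$ as functions on $M^{\lambda}$ and makes the displayed pairing vanish for every $\tau$. I expect this divergence identity to be the main obstacle: it is not a formal consequence of restriction, since a priori the normal derivative of the normal component of $\xi$ could contribute a discrepancy (as one sees already in the local model where $\vol_{M}$ is a product measure transverse to $M^{\lambda}$). Its proof relies on the Weinstein--Guillemin coisotropic embedding theorem to put $\vol_{M}$ into the product-adapted form $\tfrac{1}{m!}(i^{*}\omega + t\,d\alpha)^{m-1}\wedge\alpha\wedge dt$ near $M^{\lambda}$, together with the fact that $\shP_{\mathrm{mix}}\subset\shD_{\CC}=\ker d\mu$ forces $\shL_{\xi}t = 0$ identically, so that no $dt$-normal term survives. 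Once Lemma \ref{lem4-0-1} is in hand, the theorem follows immediately.
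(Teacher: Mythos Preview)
Your argument is essentially the paper's own proof: unfold both sides using the transpose formula and the definition of $\imath$, match the connection terms via $i^{*}\nabla$, and reduce everything to the divergence identity $\di^{1}\xi=\di^{2}\xi$ of Lemma~\ref{lem4-0-1}.

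One point worth flagging (present in the paper's proof as well): Lemma~\ref{lem4-0-1} is stated and proved only for $\xi\in\Gamma(M,\shP_{\mathrm{mix}})$, and the divergence identity genuinely fails for an arbitrary $\xi$ satisfying merely $\xi|_{M^{\lambda}}\in\Gamma(M^{\lambda},TM^{\lambda}\otimes\CC)$. For instance, in the coisotropic model the vector field $\xi=t\,\partial_{t}$ is tangent to $M^{\lambda}=\{t=0\}$ along $M^{\lambda}$, yet $\di^{1}\xi=1$ while $\xi|_{M^{\lambda}}=0$ gives $\di^{2}\xi=0$; for such $\xi$ the identity $\nabla_{\xi}(\imath(\delta))=\imath(\nabla_{\xi}\delta)$ is false. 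You yourself note that the proof of the lemma needs $\shL_{\xi}t=0$ in a neighbourhood of $M^{\lambda}$, i.e.\ $\xi\in\shD_{\CC}$ there, which is strictly stronger than tangency along $M^{\lambda}$. Since every application of the theorem elsewhere in the paper takes $\xi\in\Gamma(M,\shP_{\mathrm{mix}})$ this does not affect the main results, but your proof (like the paper's) strictly establishes the theorem only under that stronger hypothesis, not the one literally stated.
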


 \begin{proof}
 For any test section $\tau \in \Gamma_{c}(M,L^{-1})$, according to equation (\ref{eqcon}), one has
 \begin{equation}\label{eq2-0}
(\nabla_{\xi}(\imath(\delta)))(\tau)= \imath(\delta)( ^{t}\nabla_{\xi} \tau )=\delta( i^{*} (^{t}\nabla_{\xi} \tau) ),\end{equation}
 and 
 \begin{equation}\label{eq2-0-0}
(\imath(\nabla_{\xi}\delta))(\tau)=(\nabla_{\xi}\delta)(i^{*}\tau)=\delta(^{t}\nabla_{\xi} (i^{*} \tau)),\end{equation}
where $i: M^{\lambda} \hookrightarrow M$ is the inclusion. 
 
   To show equation (\ref{eq2-1}), by (\ref{eq2-0}) and (\ref{eq2-0-0}), it is enough to prove that
\begin{equation}\label{eq2-2}
i^{*} (^{t}\nabla_{\xi} \tau) = ^{t}\nabla_{\xi} (i^{*} \tau).
\end{equation}

According to equation (\ref{eqcon}), we have:
\begin{equation}\label{eq2-3}
i^{*}(^{t}{\nabla}_{\xi} \tau)= -i^{*} \left(\di \xi \tau+ \nabla_{\xi}\tau\right),
\end{equation}
where $\di \xi =\frac{ i_{\xi}  \vol_{M}}{\vol M}$.
Similarly, applying the equation (\ref{eqsec}) to $L|_{M^{\lambda}}$, we have:
\begin{equation}\label{eq2-4}
^{t}\nabla_{\xi} (i^{*} \tau)= -((\di^{2} \xi )(i^{*}\tau) + \nabla_{\xi}(i^{*}\tau))
\end{equation}
where $\di^{2} \xi  = \frac{i_{\xi} \vol^{\lambda}}{\vol^{\lambda}}$, $i^{*}\tau=\tau|_{M^{\lambda}}=\tau$ by abuse of notation. 
Denote $i^{*}(\di \xi)$ by $\di^{1} \xi$ i.e. $\di^{1}\xi =\frac{ i_{\xi}  \vol_{M}}{\vol_{ M}}|_{M^{\lambda}}$.   
As $i^{*}(\nabla_{\xi}\tau) =\nabla_{\xi}(i^{*}\tau)$ by abuse of notation $\xi|_{M^{\lambda}}=\xi$, we have
\begin{equation}\label{eq2-5}  -i^{*} \left(\di \xi \tau+ \nabla_{\xi}\tau\right)=-(\di^{1}\xi(i^{*}\tau) +\nabla_{\xi}(i^{*}\tau)).
\end{equation}
 By Lemma \ref{lem4-0-1}, 
 \begin{equation}\label{eq2-6}
\di^{1}\xi=\di^{2}\xi. \end{equation}

  Combining equations (\ref{eq2-3}), (\ref{eq2-4}), (\ref{eq2-5}) with (\ref{eq2-6}), one has
$$\delta( i^{*} (^{t}\nabla_{\xi} \tau) )= \delta(^{t}\nabla_{\xi} (i^{*} \tau)).$$  
Therefore we have: $\nabla_{\xi}(\imath(\delta))=\imath(\nabla_{\xi}\delta)$.
 \end{proof}

\begin{proposition}\label{pro4-0-4}
For any regular $\lambda \in \fot^{*}_{\ZZ, \mathrm{reg}} $ and $s \in H^{0}(M_{\lambda}, L_{\lambda})$, we have: 
$$\imath(\delta^{s}) \in \shH_{\mathrm{mix},\lambda},$$
where $\imath:\Gamma_{c} (M^{\lambda}, (L^{\lambda})^{-1})' \hookrightarrow \Gamma_{c}(M, L^{-1})' $ is the natural inclusion.
\end{proposition}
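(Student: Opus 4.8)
The plan is to verify directly the two defining conditions for membership in $\shH_{\mathrm{mix},\lambda}$: that $\imath(\delta^{s})$ is annihilated by $\nabla_{\xi}$ for every $\xi\in\Gamma(M,\shP_{\mathrm{mix}})$, and that its support lies in $\mu^{-1}(\lambda)$. The support condition is immediate from Definition \ref{def4-0-1}, since $(\imath(\delta^{s}))(\tau)=\delta^{s}(\tau|_{M^{\lambda}})$ pairs $\tau$ against $\pi^{*}s$ only over $M^{\lambda}=\mu^{-1}(\lambda)$, so $\supp\imath(\delta^{s})\subset\mu^{-1}(\lambda)$. The real content is the vanishing of the covariant derivatives, and for this I intend to transport the computation down to the quotient $M_{\lambda}$, where holomorphicity of $s$ does the work.

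First I would note that $\shP_{\mathrm{mix}}\subset\shD_{\CC}$: both summands $\shP_{J}\cap\shD_{\CC}$ and $\shI_{\CC}$ lie in $\shD_{\CC}=(\ker d\mu)\otimes\CC$, because the fundamental vector fields are tangent to the $T^{n}$-invariant level set $M^{\lambda}$ (as $\mu$ is $T^{n}$-equivariant). Hence for $\xi\in\Gamma(M,\shP_{\mathrm{mix}})$ the restriction $\xi|_{M^{\lambda}}$ is tangent to $M^{\lambda}$, so Theorem \ref{thm3-5} applies and yields $\nabla_{\xi}(\imath(\delta^{s}))=\imath(\nabla_{\xi}\delta^{s})$. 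Since $\imath$ is injective, it suffices to prove $\nabla_{\xi}\delta^{s}=0$ as a distributional section on $M^{\lambda}$. As $\delta^{s}=\iota(\pi^{*}s)$ is the image of the smooth section $\pi^{*}s$ under the embedding with respect to $\vol^{\lambda}$, the compatibility of $\nabla_{\xi}$ with that embedding (together with injectivity of $\iota$) reduces the claim to the pointwise identity $\nabla_{\xi|_{M^{\lambda}}}(\pi^{*}s)=0$.

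To establish this pointwise identity I would use that $M^{\lambda}\subset\check{M}$, since the action is free on $M^{\lambda}$; thus over $M^{\lambda}$ the splitting $\shP_{\mathrm{mix}}=(\shP_{J}\cap\shD_{\CC})\oplus\shI_{\CC}$ is a decomposition of smooth bundles, and I write $\xi|_{M^{\lambda}}=\xi_{1}+\xi_{2}$ with $\xi_{1}\in\Gamma(M^{\lambda},F)$, $F=(\shP_{J}\cap\shD_{\CC})|_{M^{\lambda}}$, and $\xi_{2}\in\Gamma(M^{\lambda},\shI_{\CC})$. The computation then rests on the Guillemin--Sternberg identity $i^{*}\nabla=\pi^{*}\nabla_{\lambda}$ of Theorem \ref{thm2-0-1}: the $L^{\lambda}$-valued one-form $i^{*}\nabla(\pi^{*}s)=\pi^{*}(\nabla_{\lambda}s)$ is pulled back from $M_{\lambda}$ and is therefore horizontal. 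For $\xi_{2}$, a combination of fundamental (vertical) vector fields, contracting a horizontal form gives $\nabla_{\xi_{2}}(\pi^{*}s)=0$. For $\xi_{1}$, I use that $F$ descends to $\shP_{J,\lambda}=TM_{\lambda}^{0,1}$, so pointwise $\pi_{*}\xi_{1}\in TM_{\lambda}^{0,1}$, whence $\nabla_{\xi_{1}}(\pi^{*}s)=\pi^{*}\bigl(\nabla_{\lambda,\,\pi_{*}\xi_{1}}s\bigr)=0$ because $s\in H^{0}(M_{\lambda},L_{\lambda})$ satisfies $\nabla_{\lambda}^{0,1}s=0$. Summing the two contributions gives $\nabla_{\xi|_{M^{\lambda}}}(\pi^{*}s)=0$, which finishes the proof.

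The step I expect to demand the most care is the passage $i^{*}\nabla=\pi^{*}\nabla_{\lambda}$ from the covariant derivative upstairs to holomorphic data on the quotient, in particular making precise that the vertical directions $\shI_{\CC}$ contribute nothing. This is exactly where the hypothesis $\lambda\in\fot^{*}_{\ZZ,\mathrm{reg}}$ is used: the integrality forces the holonomy of $\nabla$ around the $T^{n}$-orbits in $M^{\lambda}$ to be trivial, as recorded in the proof of Theorem \ref{thm3-0-1}, and it is precisely this that makes $L^{\lambda}$ descend and the identity $i^{*}\nabla=\pi^{*}\nabla_{\lambda}$ available; without it the decomposition of $\nabla_{\xi}(\pi^{*}s)$ would break down.
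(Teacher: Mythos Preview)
Your proof is correct and follows essentially the same route as the paper: both handle the support condition trivially, invoke Theorem~\ref{thm3-5} to pass from $\imath(\delta^{s})$ on $M$ to $\delta^{s}$ on $M^{\lambda}$, reduce via the $\iota$-compatibility (the paper writes this out as an explicit integration by parts, equations~(\ref{eq3-12})--(\ref{eq3-8})) to the pointwise vanishing $\nabla_{\xi}(\pi^{*}s)=0$, and then use $\pi_{*}(\shP_{\mathrm{mix}}|_{M^{\lambda}})\subset TM_{\lambda}^{0,1}$ together with holomorphicity of $s$. Your explicit splitting $\xi|_{M^{\lambda}}=\xi_{1}+\xi_{2}$ unpacks what the paper states in one line (``$\pi_{*}(\xi_{q})\in T_{\pi(q)}M_{\lambda}^{0,1}$''), but the argument is the same.
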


\begin{proof}

 By the definition of $\delta^{s}$, we have $\imath(\delta^{s}) \in \Gamma_{c}(M,L^{-1})'$ and $\supp \imath(\delta^{s}) \subset \mu^{-1}(\lambda)$. It remains to show that, for any $\xi \in \Gamma(M, \shP_{\mathrm{mix}})$,
 \begin{equation} \label{eq3-11}\nabla_{\xi} (\imath(\delta^{s}))= 0.\end{equation}
Note that for any $\xi \in (M, \shP_{\mathrm{mix}})$, $\xi|_{M^{\lambda}} \in \Gamma(M^{\lambda}, TM^{\lambda} \otimes \CC)$. 
To check equation (\ref{eq3-11}), by Theorem \ref{thm3-5}, it is equivalent to prove, for any $\xi \in \Gamma(M^{\lambda}, \shP_{\mathrm{mix}})$
\begin{equation}
\nabla_{\xi} \delta^{s}=0.
\end{equation}

  Take any test section $\tau \in \Gamma_{c}(M^{\lambda},(L^{\lambda})^{-1})$, according to equation (\ref{eqcon}), we have:
\begin{equation}\label{eq3-12}
(\nabla_{\xi} \delta^{s})(\tau) = \delta^{s}\left(^{t}{\nabla}_{\xi} \tau\right)=- \delta^{s}\left((\di^{2} \xi) \tau+ \nabla_{\xi}\tau\right),
\end{equation}
where $\di^{2} \xi  = \frac{i_{\xi} \vol^{\lambda}}{\vol^{\lambda}}$.
 By definition of $\delta^{s}$, it can be seen that:
\begin{equation}\label{eq3-14}
-\delta^{s}\left((\di^{2} \xi ) \tau + \nabla_{\xi}\tau\right)
= -\int_{M^{\lambda}} \left\langle\pi^{*}s, (\di^{2} \xi) \tau + \nabla_{\xi}\tau\right\rangle \vol^{\lambda}.
\end{equation}
Similarly, applying the equation (\ref{eqsec}) to $L|_{M^{\lambda}}$, we have:
\begin{equation}\label{eq3-15}
 \int _{M^{\lambda}}\left\langle \nabla_{\xi}(\pi^{*}s), \tau \right\rangle \vol^{\lambda}=-\int_{M^{\lambda}}\left\langle\pi^{*}s, (\di^{2} \xi )\tau + \nabla_{\xi}\tau \right\rangle \vol^{\lambda}.\end{equation}

  Combining equations (\ref{eq3-12}), (\ref{eq3-14}), with (\ref{eq3-15}), we have
 \begin{equation}\label{eq3-8}
(\nabla_{\xi} \delta^{s})(\tau)=\int _{M^{\lambda}}\left\langle \nabla_{\xi}(\pi^{*}s), \tau \right\rangle \vol^{\lambda}.
\end{equation}
  
Since $s \in H^{0}(M_{\lambda}, L_{\lambda})$ is a holomorphic section, we have $\nabla s \in \Gamma(M_{\lambda}, T^{*}M_{\lambda}^{1,0}\otimes L_{\lambda})$. 
For any $\xi \in \Gamma(M, \shP_{\mathrm{mix}})$ and $q \in M^{\lambda}$, as $(\shP_{\mathrm{mix}})_{q} \subset (\shD_{\CC})_{q}= T_{q}M^{\lambda} \otimes \CC$, we have $\pi_{*}(\xi_{q}) \in T_{\pi(q)}M_{\lambda}^{0,1}$. 
 This implies $\nabla_{\xi} (\pi^{*}s) =0$ on $M^{\lambda}$, for any $\xi \in \Gamma(M, \shP_{\mathrm{mix}})$. Then, for all $\tau \in \Gamma_{c}(M,L^{-1})$, by equation (\ref{eq3-8}),
 \begin{align*}
(\nabla_{\xi} \delta^{s})(\tau) =0.
\end{align*} 
Therefore we have: $\imath(\delta^{s} )\in \shH_{\mathrm{mix},\lambda}.$
\end{proof}

\subsection{$\lambda$-weight quantum subspace $\shH_{\mathrm{mix}, \lambda}$ }
In this subsection, we are going to show that (see Theorem \ref{thm4-0-11}) for any regular $\lambda \in \fot^{*}_{\ZZ, \mathrm{reg}}$, $$ \kappa: H^{0}(M_{\lambda}, L_{\lambda}) \rightarrow \shH_{\mathrm{mix},\lambda}$$ given by
$s \mapsto \kappa(s)=\imath(\delta^{s})$ is an isomorphism.

Firstly, we show that $T^{n}$-invariant distributional sections of $L^{\lambda}$ can be descended to distributional sections of $L_{\lambda}$. That is, for any $\delta \in \Gamma_{c} (M^{\lambda}, (L^{\lambda})^{-1})'$ satisfying $\nabla_{\xi^{\#}} \delta=0$, there exists a distributional section $\eta \in \Gamma_{c}(M_{\lambda}, L_{\lambda}^{-1})'$ such that $\delta= \pi^{*} \eta$ (Lemma \ref{lem4-0-6}). Secondly, we show that if $\nabla_{\zeta}( \pi^{*} \eta)=0$ for all $\zeta \in \Gamma(M^{\lambda}, \shP_{\mathrm{mix}})$, then $\eta$ is $\bar{\partial}$-closed (Theorem \ref{thm4-0-8}). Finally, we show that $H^{0}(M_{\lambda}, L_{\lambda}) \cong \shH_{\mathrm{mix},\lambda}$ (Theorem \ref{thm4-0-11}).

\subsubsection{Descending distributional sections from $M^{\lambda}$ to $M_{\lambda}$} 
 For any $\lambda \in \fot^{*}_{\mathrm{reg}}$, let $\pi:M^{\lambda} \rightarrow M_{\lambda}$ be the principal $T^{n}$-bundle. Recall that $(L^{\lambda}, \nabla)$ can be descended to $M_{\lambda}$ which we denote as $(L_{\lambda}, \nabla)$. 
 According to Remark \ref{rk4-1}, we have $\pi_{*}: \Gamma(M^{\lambda}, (L^{\lambda})^{-1}) \rightarrow \Gamma(M_{\lambda},L^{-1}_{\lambda})$ and dually we have $\pi^{*}: \Gamma_{c}(M_{\lambda},L^{-1}_{\lambda})' \rightarrow \Gamma_{c}(M_{\lambda},(L^{\lambda})^{-1})'$.
 
In fact, our above claim $\delta= \pi^{*}\eta$ holds true for any $T^{n}$-principal bundle $P \rightarrow B$. 
Let $\pi: P \rightarrow B$ be a principal $T^{n}$-bundle with a fiberwise $T^{n}$-invariant volume form $d\theta$ such that $\int_{P}d\theta=1 \in C^{\infty}(B)$. 
Let $(E, \nabla)$ be a line bundle over $B$. We can push-forward sections of $\pi^{*}E$ to sections of $E$ with respect to $d \theta$. Furthermore we have:
\begin{lemma}\label{lem4-0-6}
 Taking $\delta \in \Gamma_{c}(P, (\pi^{*}E)^{-1})'$,
if $\nabla_{\xi^{\#}} \delta=0$ for any $\xi \in \fot$, then there exists a distributional section $\eta \in \Gamma_{c}(B,E^{-1})'$ such that 
$$\delta= \pi^{*}\eta.$$ 
\end{lemma}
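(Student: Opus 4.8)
The plan is to build the descended distribution by the explicit formula $\eta(\sigma) := \delta(\pi^{*}\sigma)$ for $\sigma \in \Gamma_{c}(B,E^{-1})$, where $\pi^{*}\sigma$ is the pulled-back test section (compactly supported because the fibers $T^{n}$ are compact). This $\eta$ is visibly linear and continuous, so it is a genuine element of $\Gamma_{c}(B,E^{-1})'$, and the entire content of the lemma is the identity $\delta(\tau)=\eta(\pi_{*}\tau)=\delta(\pi^{*}\pi_{*}\tau)$ for every test section $\tau\in\Gamma_{c}(P,(\pi^{*}E)^{-1})$. Writing $A:=\pi^{*}\pi_{*}$ for the fiberwise averaging operator and using $\pi_{*}\pi^{*}=\mathrm{id}$ (which follows from $\int_{\mathrm{fiber}}d\theta=1$), one checks $\pi_{*}(\tau-A\tau)=0$. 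Hence everything reduces to the single vanishing statement
\[
\pi_{*}\tau = 0 \implies \delta(\tau) = 0 . \qquad (\star)
\]

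The first ingredient is the infinitesimal input. Since $d\theta$ is $T^{n}$-invariant along the fibers, the volume form on $P$ used to define $\Gamma_{c}(P,(\pi^{*}E)^{-1})'$ is $T^{n}$-invariant, so $\shL_{\xi^{\#}}\vol=0$ and therefore $\di\xi^{\#}=0$ for every $\xi\in\fot$. Substituting this into the transpose formula \eqref{eqcon}, the hypothesis $\nabla_{\xi^{\#}}\delta=0$ becomes
\[
\delta(\nabla_{\xi^{\#}}\tau') = -(\nabla_{\xi^{\#}}\delta)(\tau') = 0 \qquad\text{for all } \tau'\in\Gamma_{c}(P,(\pi^{*}E)^{-1}).
\]
Thus $\delta$ annihilates the image of each operator $\nabla_{\xi^{\#}}$.

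The heart of the argument, and the main obstacle, is a decomposition lemma of Koszul/Fourier type asserting that a test section of vanishing fiber integral is a sum of fiberwise derivatives, namely
\[
\ker\pi_{*} = \sum_{j=1}^{n}\nabla_{\xi_{j}^{\#}}\bigl(\Gamma_{c}(P,(\pi^{*}E)^{-1})\bigr),
\]
for a basis $\xi_{1},\dots,\xi_{n}$ of $\fot$. To establish it I would first use that the pullback connection on $\pi^{*}E$ is flat with trivial holonomy along the fibers — this is exactly the descent structure guaranteed for $L^{\lambda}=\pi^{*}L_{\lambda}$ on $M^{\lambda}$ — so that in a flat fiberwise trivialization $\nabla_{\xi_{j}^{\#}}$ acts as $\partial_{\theta_{j}}$ and $\pi_{*}$ as integration over the torus fiber. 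For $n=1$ the claim is the elementary antiderivative construction: if the fiber integral $\int_{S^{1}}\tau\,d\theta$ vanishes, then $\tau'(\,\cdot\,,\theta)=\int_{0}^{\theta}\tau$ is smooth, periodic, and compactly supported with $\partial_{\theta}\tau'=\tau$. For general $n$ I would telescope through the partial averages $A_{1},\dots,A_{n}$ over the individual circles, writing $\tau-A\tau=\sum_{j=1}^{n}(A_{1}\cdots A_{j-1})(\tau-A_{j}\tau)$; since $A_{1}\cdots A_{j-1}$ commutes with $\partial_{\theta_{j}}$ and $\tau-A_{j}\tau$ has zero average over the $j$-th circle, each summand lies in the image of $\nabla_{\xi_{j}^{\#}}$ by the $n=1$ step, with compact support preserved throughout.

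Combining the two displays yields $(\star)$: any $\tau$ with $\pi_{*}\tau=0$ equals $\tau-A\tau=\sum_{j}\nabla_{\xi_{j}^{\#}}\tau_{j}'$, on which $\delta$ vanishes, and then $\delta=\pi^{*}\eta$ follows as explained. The only points needing genuine care beyond routine verification are the preservation of smoothness and compact support under the antiderivative and averaging operations, and the fact that the flat fiberwise trivialization is globally well defined over each torus fiber — which is precisely where the triviality of the fiber holonomy is used.
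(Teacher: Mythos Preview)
Your argument is correct and takes a genuinely different route from the paper's.

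The paper localizes via a partition of unity, trivializes the line bundle with a pulled-back frame $\sigma=\pi^{*}\sigma_{0}$ so that $\nabla_{\xi^{\#}}\delta=0$ becomes $\xi^{\#}f_{\delta}=0$ for the associated distributional function, and then invokes a separate Lemma~\ref{lem4-0-5} to descend $T^{n}$-invariant distributional \emph{functions}. That lemma is proved by a soft density argument: approximate $f_{\delta}$ by smooth functions $\delta_{\epsilon}$, observe that $\xi^{\#}\delta_{\epsilon}=0$ forces $\delta_{\epsilon}=\pi^{*}\eta_{\epsilon}$ for smooth $\eta_{\epsilon}$, and pass to the limit.

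You instead work on the test-section side and prove a hard decomposition: every $\tau\in\ker\pi_{*}$ lies in $\sum_{j}\nabla_{\xi_{j}^{\#}}\Gamma_{c}(P,(\pi^{*}E)^{-1})$, via the telescoping identity $\tau-A\tau=\sum_{j}A_{1}\cdots A_{j-1}(\tau-A_{j}\tau)$ and the one-variable antiderivative. Combined with $\di\xi^{\#}=0$ (which holds for the $T^{n}$-invariant volume $\vol^{\lambda}$ used in the application, since the connection form $\alpha$ on an abelian principal bundle is invariant), this yields $\delta(\tau)=\delta(A\tau)$ directly. Your approach is more explicit and self-contained, avoiding the approximation step whose details the paper leaves to the references; the paper's approach has the virtue of not needing $\di\xi^{\#}=0$, since that term appears symmetrically in both transpose formulas and cancels. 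Either way the localization to trivializing charts (for both $P$ and $E$) is needed, and you should make that partition-of-unity reduction explicit when writing the proof out in full.
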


\begin{proof} 
By partition of unity, it is enough to show that on any open subset $U$ of $B$, for $\delta \in \Gamma_{c}(\pi^{-1}(U), (\pi^{*}E)^{-1})'$,
if $\nabla_{\xi^{\#}} \delta=0$ for any $\xi \in \fot$, there exists a distributional section $\eta \in \Gamma_{c}(U,E^{-1})'$ such that $\delta= \pi^{*}\eta$. That is, for any $\tau \in \Gamma_{c}(\pi^{-1}(U), (\pi^{*}E)^{-1})$, 
$$ \delta(\tau) =\eta( \pi_{*}\tau).$$

Fixing a local frame $\sigma_{0} \in \Gamma(U,E)$ of $E$ on an open subset $U \subset B$, let $\sigma:=\pi^{*}\sigma_{0}$ and $\sigma^{-1}$ be the corresponding local frames of $\pi^{*}E$ and $(\pi^{*}E)^{-1}$ respectively on $\pi^{-1}(U)$. 
With respect to local frames $\sigma$ and $\sigma^{-1}$, the distributional section $\delta \in \Gamma_{c}(\pi^{-1}(U), (\pi^{*}E)^{-1})'$ corresponds to the distributional function
$f_{\delta} \in \Gamma_{c}(\pi^{-1}(U),\CC)'$, where $f_{\delta}$ is determined by: 
\begin{equation} \label{eq3-3-3} f_{\delta}(g_{\tau})=\delta(g_{\tau} \sigma^{-1}),
\end{equation}
 for any text function $g_{\tau} \in \Gamma_{c}(\pi^{-1}(U),\CC)$.
We restrict our attention to show that $ \nabla_{\xi^{\#}}\delta =0 $ if and only if $ \xi^{\#}f_{\delta}=0$. Applying the equation (\ref{eqcon}) to line bundle $\pi^{*}L$ and trivial bundle over $\pi^{-1}(U)$ respectively, it can be seen that:
\begin{equation}  \label{eq3-3-4}\left(\nabla_{\xi^{\#}}\delta \right)\left( \tau\right)
 =-\delta \left((\di \xi^{\#}) \tau + \nabla_{\xi^{\#}} \tau\right),
 \end{equation} 
 and
\begin{equation}\label{eq3-3-7}
( \xi^{\#} f_{\delta})\left( g_{\tau}\right)= f_{\delta} \left(-\left(\di \xi^{\#} g_{\tau}+\xi^{\#}g_{\tau}\right)\right).
\end{equation}
 Since $\nabla_{\xi^{\#}}\sigma=\nabla_{\xi^{\#}}(\pi^{*}\sigma_{0}) =0$, one has $\nabla_{\xi^{\#}}\sigma^{-1}=0$ and 
 \begin{equation}  \label{eq3-3-5}\nabla_{\xi^{\#}} \tau=\nabla_{\xi^{\#}}\left(g_{\tau}\sigma^{-1}\right)= \left(\xi^{\#}g_{\tau}\right)\sigma^{-1}.
 \end{equation}
Combining equations ( \ref{eq3-3-3}), ( \ref{eq3-3-4}), (\ref{eq3-3-7}), with (\ref{eq3-3-5}), we obtain that
\begin{equation}\label{eq3-3-8} \left(\nabla_{\xi^{\#}}\delta \right)\left( \tau\right)= (\xi^{\#} f_{\delta})\left( g_{\tau}\right),\end{equation}
 for any $\tau \in \Gamma_{c}\left(\pi^{-1}(U), (\pi^{*}E)^{-1}\right)$.
It turns out that $\nabla_{\xi^{\#}} \delta=0$ iff $\xi^{\#}f_{\delta}=0$ for any $\xi \in \fot$. Then by Lemma \ref{lem4-0-5}, there exists a distributional function $f_{\eta} \in \Gamma_{c}(U,\CC)'$ such that $f_{\delta}=\pi^{*}(f_{\eta})$. 
Define $\eta \in  \Gamma_{c}(U, (\pi^{*}E)^{-1})'$ to be distributional section associated to $f_{\eta}$ with respect to the nowhere vanishing section $\sigma^{-1}_{0}$, that is $\eta(h_{\tau} \sigma_{0}^{-1})=f_{\eta}(h_{\tau})$. For any test section $\tau \in \Gamma_{c}(\pi^{-1}(U), \pi^{*}E)$, it can be check that: $$\delta(\tau)=
(\pi^{*}\eta)\tau.$$
Therefore we have $\delta =\pi^{*} \eta$. 
\end{proof}

\begin{lemma} \label{lem4-0-5}
Let $\pi: P \rightarrow B$ be the principal $T^{n}$-bundle and let $U$ be any open subset of $B$. Let $\delta \in \Gamma_{c}(\pi^{-1}(U),\CC)'$ be a distributional function. If $\xi^{\#}\delta =0$ for any $\xi \in \fot$, there exists a distributional function $\eta \in  \Gamma_{c}(U,\CC)'$, such that $\delta=\pi^{*}\eta$. Namely,
$$\delta( g)= \eta( \pi_{*}g),  \forall ~g \in  \Gamma_{c}(\pi^{-1}(U),\CC).$$
\end{lemma}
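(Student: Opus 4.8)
The plan is to produce $\eta$ by the only formula that can work and then verify it. Since the fibres of $\pi$ are the compact torus $T^{n}$, for any $h \in \Gamma_{c}(U,\CC)$ the pullback $\pi^{*}h = h\circ\pi$ again has compact support in $\pi^{-1}(U)$, and $h \mapsto \pi^{*}h$ is continuous from $\Gamma_{c}(U,\CC)$ to $\Gamma_{c}(\pi^{-1}(U),\CC)$. I would therefore define $\eta(h) := \delta(\pi^{*}h)$; this is automatically a distribution on $U$, being the composite of $\delta$ with a continuous linear map. With this definition $\eta(\pi_{*}g) = \delta(\pi^{*}\pi_{*}g)$ for every $g \in \Gamma_{c}(\pi^{-1}(U),\CC)$, so the assertion $\delta = \pi^{*}\eta$ is reduced to the single identity
$$\delta(\pi^{*}\pi_{*}g) = \delta(g), \qquad \forall\, g \in \Gamma_{c}(\pi^{-1}(U),\CC).$$
Here $\pi^{*}\pi_{*}g$ is exactly the fibrewise average of $g$: writing the principal action as $t\cdot p$ and the induced action on functions as $(t\cdot g)(p) = g(t^{-1}\cdot p)$, one has $(\pi^{*}\pi_{*}g)(p) = \int_{T^{n}} g(t\cdot p)\, d\theta = \int_{T^{n}}(t\cdot g)(p)\, d\theta$, using the normalization $\int_{T^{n}}d\theta = 1$ and the inversion-invariance of $d\theta$. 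Thus the whole lemma amounts to saying that $\delta$ is unchanged by fibrewise averaging.

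To prove the displayed identity I would first upgrade the infinitesimal invariance $\xi^{\#}\delta = 0$ to genuine $T^{n}$-invariance. Taking the ambient volume form to be $T^{n}$-invariant (as it is in all our applications, where it is built from $d\theta$), we have $\di\,\xi^{\#} = 0$, so by (\ref{eqcon}) the hypothesis reads $\delta(\xi^{\#}w) = 0$ for every test function $w$. Fix $\xi \in \fot$ and $g \in \Gamma_{c}(\pi^{-1}(U),\CC)$, and consider $\phi(s) := \delta(\exp(s\xi)\cdot g)$; since $s \mapsto \exp(s\xi)\cdot g$ is a smooth curve in $\Gamma_{c}(\pi^{-1}(U),\CC)$ and $\delta$ is continuous, $\phi$ is differentiable with $\phi'(s) = -\delta\big(\xi^{\#}(\exp(s\xi)\cdot g)\big)$, by the standard flow identity $\frac{d}{ds}(\exp(s\xi)\cdot g) = -\xi^{\#}(\exp(s\xi)\cdot g)$. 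The right-hand side vanishes by the hypothesis, so $\phi$ is constant and $\delta(\exp(s\xi)\cdot g) = \delta(g)$. As $T^{n}$ is connected and generated by such one-parameter subgroups, this yields $\delta(t\cdot g) = \delta(g)$ for every $t \in T^{n}$.

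Finally I would combine these two facts. Because $t \mapsto t\cdot g$ is continuous into $\Gamma_{c}(\pi^{-1}(U),\CC)$ with all supports contained in one compact set, the integral $\int_{T^{n}}(t\cdot g)\,d\theta$ converges in $\Gamma_{c}(\pi^{-1}(U),\CC)$ and the continuous functional $\delta$ passes through it, whence $\delta(\pi^{*}\pi_{*}g) = \int_{T^{n}}\delta(t\cdot g)\,d\theta = \int_{T^{n}}\delta(g)\,d\theta = \delta(g)$, which is the desired identity. I expect the main obstacle to be precisely this interchange of $\delta$ with the fibre integral, together with the preceding differentiation of $\phi$: both require the relevant curves and superpositions to be taken in the LF-topology of $\Gamma_{c}(\pi^{-1}(U),\CC)$ with supports in a common compact set, so that $\delta$, being continuous there, commutes with limits and integrals. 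An alternative route that avoids the global integration is to localize over a trivializing chart $V$, where $P|_{V}\cong V\times T^{n}$ and $\xi^{\#}$ becomes $\partial_{\theta}$; a compactly supported function of zero fibre-average can then be written as $\sum_{i}\partial_{\theta_{i}}u_{i}$ via Fourier series on $T^{n}$, so $\delta$ annihilates it, and the resulting local solutions $\eta_{V}$ patch together by the injectivity of $\pi^{*}$ on distributions.
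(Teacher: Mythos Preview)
Your argument is correct, but it follows a genuinely different route from the paper's. The paper proves the lemma by \emph{mollification}: it approximates $\delta$ by smooth functions $\delta_{\epsilon}$ with $\delta_{\epsilon}\to\delta$ and $(\xi^{\#}\delta_{\epsilon})(g)=(\xi^{\#}\delta)(g_{\epsilon})$, concludes that each $\delta_{\epsilon}$ is $T^{n}$-invariant and hence descends to a smooth $\eta_{\epsilon}$ on $U$, and then passes to the limit $\eta=\lim_{\epsilon\to 0}\eta_{\epsilon}$. Your proof instead writes down $\eta(h)=\delta(\pi^{*}h)$ directly and reduces everything to $\delta(\pi^{*}\pi_{*}g)=\delta(g)$, which you obtain by integrating the one-parameter flows to promote $\xi^{\#}\delta=0$ to full $T^{n}$-invariance and then commuting $\delta$ with the fibre integral. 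Your approach is more explicit about what $\eta$ is and avoids invoking an off-the-shelf mollification result on $P$ (which in the paper is cited but not spelled out); the paper's approach, on the other hand, sidesteps the need to justify the interchange of $\delta$ with $\int_{T^{n}}(\cdot)\,d\theta$ and the differentiation of $s\mapsto\delta(\exp(s\xi)\cdot g)$, since it works with smooth approximants throughout. Both arguments tacitly use a $T^{n}$-invariant volume so that $\di\,\xi^{\#}=0$; you flag this explicitly, which is appropriate because it is exactly what makes the hypothesis $\xi^{\#}\delta=0$ equivalent to $\delta(\xi^{\#}w)=0$. Your alternative local route via Fourier series on $V\times T^{n}$ is also valid and is perhaps the cleanest way to see why a function of zero fibre average lies in the span of the $\xi^{\#}u$.
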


\begin{proof}
For any $\delta \in \Gamma_{c}(\pi^{-1}(U),\CC)'$,
there exist $\delta_{\epsilon} \in \Gamma(\pi^{-1}(U),\CC)$ (see \cite{ HJ, Ru}) such that $\lim_{\epsilon \rightarrow 0} \delta_{\epsilon} = \delta$ and 
\begin{equation}(\xi^{\#}\delta_{\epsilon})( g ) = (\xi^{\#}\delta)( g_{\epsilon}),\end{equation}
 for any $g \in  \Gamma_{c}(\pi^{-1}(U),\CC)$.
As $\xi^{\#}\delta=0$, we obtain $\xi^{\#}\delta_{\epsilon}=0$.
Since $\delta_{\epsilon}$ is smooth, there exists a smooth function $\eta_{\epsilon} \in \Gamma(U,\CC)$, such that $\delta_{\epsilon}= \pi^{*}\eta_{\epsilon} \in \Gamma(\pi^{-1}(U),\CC)$.
It can be check that  
$$
\lim_{\epsilon \rightarrow 0} \eta_{\epsilon}(h)=  \lim_{\epsilon \rightarrow 0}\delta_{\epsilon}( \pi^{*}h),
$$
for any $h \in  \Gamma_{c}(U,\CC)$. Hence we have $\lim_{\epsilon \rightarrow 0}\eta_{\epsilon}$ exists and denoted by $\eta$. It follows $$\delta=\pi^{*}\eta.$$
\end{proof}

\subsubsection{Pulling back commutes with taking divergence}
 Fix $\lambda \in \fot^{*}_{\ZZ, \mathrm{reg}}$, let $\alpha \in \Omega^{1}(M^{\lambda},\fot)$ be a connection on the principal $T^{n}$-bundle $ \pi: M^{\lambda} \rightarrow M_{\lambda}$. For any $\zeta \in \Gamma(M_{\lambda}, TM_{\lambda})$, the horizontal lifting of $\zeta$ with respect to $\alpha$ is denoted by $\tilde{\zeta}$. Denote the divergence of $\zeta$ on $M_{\lambda}$ with respect to $\vol_{\lambda}$ by $\di \zeta$ (i.e. $\di \zeta = \frac{\shL_{\zeta} \vol_{\lambda}}{\vol_{\lambda}}$) and denote the divergence of $\tilde{\zeta}$ on $M^{\lambda}$ with respect to $\vol^{\lambda}$ by $\di \tilde{\zeta}$ (i.e. $\di \tilde{\zeta} = \frac{\shL_{\tilde{\zeta}} \vol^{\lambda}}{\vol^{\lambda}}$).

\begin{lemma}\label{lem4-0-7} Let $\di \zeta$ and $\di \tilde{\zeta}$ be defined as above.   Then we have
$$\pi^{*} (\di{\zeta})= \di{\tilde{\zeta}},$$ 
as smooth functions on $M^{\lambda}$.
\end{lemma}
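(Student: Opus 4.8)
The plan is to compute the divergence $\di\tilde{\zeta}$ directly from the defining formula $\vol^{\lambda}=\pi^{*}\vol_{\lambda}\wedge\alpha^{n}$ and to match the answer against $\pi^{*}(\di\zeta)$. Since $\di\tilde{\zeta}$ is characterized by $\shL_{\tilde{\zeta}}\vol^{\lambda}=(\di\tilde{\zeta})\,\vol^{\lambda}$, I would apply the Leibniz rule for the Lie derivative to get
$$\shL_{\tilde{\zeta}}\vol^{\lambda}=\left(\shL_{\tilde{\zeta}}\pi^{*}\vol_{\lambda}\right)\wedge\alpha^{n}+\pi^{*}\vol_{\lambda}\wedge\shL_{\tilde{\zeta}}\alpha^{n},$$
and then analyse the two summands separately.

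For the first summand I would use that the horizontal lift $\tilde{\zeta}$ is $\pi$-related to $\zeta$, i.e. $\pi_{*}\tilde{\zeta}=\zeta$. The standard fact that Lie differentiation commutes with pullback along $\pi$ for $\pi$-related vector fields then yields $\shL_{\tilde{\zeta}}\pi^{*}\vol_{\lambda}=\pi^{*}\shL_{\zeta}\vol_{\lambda}=\pi^{*}\!\left((\di\zeta)\vol_{\lambda}\right)=\pi^{*}(\di\zeta)\,\pi^{*}\vol_{\lambda}$. Wedging with $\alpha^{n}$ rebuilds $\vol^{\lambda}$, so this summand equals $\pi^{*}(\di\zeta)\,\vol^{\lambda}$, which is precisely the desired term.

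The heart of the argument, and the step I expect to be the main obstacle, is showing that the second summand $\pi^{*}\vol_{\lambda}\wedge\shL_{\tilde{\zeta}}\alpha^{n}$ vanishes identically. Writing $\alpha=\sum_{i}\xi_{i}\otimes\alpha_{i}$ in a basis $\{\xi_{i}\}$ of $\fot$, so that $\alpha^{n}$ is a constant multiple of $\alpha_{1}\wedge\cdots\wedge\alpha_{n}$, the Leibniz rule expands $\shL_{\tilde{\zeta}}\alpha^{n}$ into a sum of terms in which one factor $\alpha_{i}$ is replaced by $\shL_{\tilde{\zeta}}\alpha_{i}$. Because $\tilde{\zeta}$ is horizontal we have $i_{\tilde{\zeta}}\alpha=0$, so Cartan's formula gives $\shL_{\tilde{\zeta}}\alpha=i_{\tilde{\zeta}}d\alpha$; and since the structure group $T^{n}$ is abelian, the curvature $d\alpha$ is horizontal and invariant, hence basic, say $d\alpha=\pi^{*}\Omega$ with $\Omega\in\Omega^{2}(M_{\lambda},\fot)$. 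Componentwise this makes each $\shL_{\tilde{\zeta}}\alpha_{i}=\pi^{*}(i_{\zeta}\Omega_{i})$ a pullback from $M_{\lambda}$. Rearranging each term by graded commutativity isolates a factor $\pi^{*}\vol_{\lambda}\wedge\shL_{\tilde{\zeta}}\alpha_{i}=\pi^{*}\!\left(\vol_{\lambda}\wedge i_{\zeta}\Omega_{i}\right)$; but $\vol_{\lambda}=\frac{1}{(m-n)!}\omega_{\lambda}^{m-n}$ is a top-degree form on the $2(m-n)$-dimensional manifold $M_{\lambda}$, so its wedge with the positive-degree form $i_{\zeta}\Omega_{i}$ is zero. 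Hence the entire second summand vanishes.

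Combining the two computations gives $\shL_{\tilde{\zeta}}\vol^{\lambda}=\pi^{*}(\di\zeta)\,\vol^{\lambda}$, and therefore $\di\tilde{\zeta}=\pi^{*}(\di\zeta)$ as functions on $M^{\lambda}$. The only genuinely nontrivial inputs are the basicness of the abelian curvature $d\alpha$ and the dimension count forcing $\vol_{\lambda}$ to be top-degree; the rest is bookkeeping with the Leibniz rule and $\pi$-relatedness.
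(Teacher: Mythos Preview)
Your proof is correct and follows essentially the same strategy as the paper: apply the Leibniz rule to $\vol^{\lambda}=\pi^{*}\vol_{\lambda}\wedge\alpha^{n}$, handle the first summand via $\pi$-relatedness, and kill the second summand by showing $\shL_{\tilde{\zeta}}\alpha$ is horizontal together with a dimension count. The only minor variation is in how horizontality of $\shL_{\tilde{\zeta}}\alpha$ is obtained: the paper uses the $T^{n}$-invariance of $\tilde{\zeta}$ (from abelianness) and the Leibniz identity $(\shL_{\tilde{\zeta}}\alpha)(\xi^{\#})=\shL_{\tilde{\zeta}}(\alpha(\xi^{\#}))-\alpha([\tilde{\zeta},\xi^{\#}])=0$, whereas you use Cartan's formula and the basicness of the abelian curvature $d\alpha$; both routes are standard and equivalent.
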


\begin{proof}
 As $T^{n}$ is abelian, the horizontal lifting $\tilde{\zeta}$ of $\zeta$ with respect to the connection one form $\alpha$ is $T^{n}$-invariant. That is
\begin{equation} \label{eq3-0-0}\shL_{\xi^{\#}} \tilde{\zeta} =0, \end{equation}
 for all $\xi \in \fot$, where $\xi^{\#}$ is the fundamental vector field associate to $\xi$.
According to the property of principal $T^{n}$-connection and equation (\ref{eq3-0-1}), we have
\begin{equation} \label{eq3-0-1}(\shL_{\tilde{\zeta}} \alpha)(\xi^{\#})= \shL_{\tilde{\zeta}} (\alpha(\xi^{\#}))- \alpha(\shL_{\tilde{\zeta}} \xi^{\#})=0.\end{equation}
Recall that $\vol^{\lambda}=\pi^{*}\vol_{\lambda} \wedge \alpha^{n}$. By equation $(\ref{eq3-0-1})$, one has 
\begin{equation} \label{eq3-0-2}\shL_{\tilde{\zeta}}\vol^{\lambda}=(\shL_{\tilde{\zeta}}(\pi^{*}\vol_{\lambda}))\wedge \alpha^{n} .\end{equation}
On the other hand, by Cartan formula and $\vol_{\lambda}$ being the volume form on $B$, we have: 
\begin{equation} \label{eq3-0-3}
\shL_{\tilde{\zeta}}(\pi^{*}\vol_{\lambda})= d(i_{\tilde{\zeta}} (\pi^{*}\vol_{\lambda}))= \pi^{*}(\shL_{\zeta}\vol_{\lambda}),
\end{equation}
Recall that
\begin{equation}\label{eq3-0-4}
\shL_{\zeta}\vol_{\lambda}=(\di{\zeta}) \vol_{\lambda},~~ \shL_{\tilde{\zeta}} \vol^{\lambda}= (\di{\tilde{\zeta}}) \vol^{\lambda}.\end{equation}
Combining equation (\ref{eq3-0-2}), (\ref{eq3-0-3}), with (\ref{eq3-0-4}), one has 
\begin{align*} (\di{\tilde{\zeta}}) \vol^{\lambda}&=\shL_{\tilde{\zeta}} \vol^{\lambda}= \pi^{*}(\shL_{\zeta}\vol_{\lambda}) \wedge \alpha^{n} \\
&= \pi^{*}(\di{\zeta}) \pi^{*}\vol_{\lambda} \wedge \alpha^{n}\\
&= \pi^{*}(\di{\zeta}) \vol^{\lambda}.
\end{align*}
Therefore we obtain: $\pi^{*} (\di{\zeta})= \di{\tilde{\zeta}}.$
\end{proof}

\begin{theorem}\label{thm4-0-8}
For any $\lambda \in \fot^{*}_{\ZZ, \mathrm{reg}}$ and distributional function $\eta \in \Gamma_{c}(M_{\lambda}, \CC)' $, if $\nabla_{\xi}(\pi^{*}\eta)=0$, for any $\xi \in \Gamma(M^{\lambda}, \shP_{\mathrm{mix}})$, then we have $\nabla_{\zeta}\eta=0$, for all $\zeta \in \Gamma(M_{\lambda},TM_{\lambda}^{0,1})$.
\end{theorem}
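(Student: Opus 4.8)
The plan is to transfer the hypothesis, which lives on the level set $M^{\lambda}$, down to the quotient $M_{\lambda}$ by pairing $\pi^{*}\eta$ against the horizontal lift of an arbitrary anti-holomorphic field. Fix $\zeta \in \Gamma(M_{\lambda}, TM_{\lambda}^{0,1})$ and let $\tilde{\zeta}$ be its horizontal lift with respect to the connection $\alpha$, as in Lemma \ref{lem4-0-7}. Recall that on $M^{\lambda}$ the polarization splits as $\shP_{\mathrm{mix}} = F \oplus \shI_{\CC}$, where $F = \shP_{J} \cap \shD_{\CC}$ descends to $\shP_{J,\lambda} = TM_{\lambda}^{0,1}$ and $\shI_{\CC}$ is exactly the vertical (fibre) distribution of $\pi$, since the fibres of $\pi$ are the $T^{n}$-orbits. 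First I would prove $\nabla_{\tilde{\zeta}}(\pi^{*}\eta) = 0$; then I would prove the intertwining identity $\pi^{*}(\nabla_{\zeta}\eta) = \nabla_{\tilde{\zeta}}(\pi^{*}\eta)$; injectivity of $\pi^{*}$ would then force $\nabla_{\zeta}\eta = 0$.

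For the first step, because $\pi_{*}$ maps $F$ fibrewise isomorphically onto $TM_{\lambda}^{0,1}$ (the sum $\shP_{\mathrm{mix}} = F \oplus \shI_{\CC}$ being direct), there is a unique $\xi_{F} \in \Gamma(M^{\lambda}, F)$ with $\pi_{*}\xi_{F} = \zeta = \pi_{*}\tilde{\zeta}$. Their difference $v := \xi_{F} - \tilde{\zeta}$ is then vertical, so $v \in \Gamma(M^{\lambda}, \shI_{\CC})$. Both $\xi_{F}$ and $v$ are sections of $\shP_{\mathrm{mix}}$, so the hypothesis applies to each and gives
$$
\nabla_{\tilde{\zeta}}(\pi^{*}\eta) = \nabla_{\xi_{F}}(\pi^{*}\eta) - \nabla_{v}(\pi^{*}\eta) = 0.
$$

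The technical heart is the identity $\pi^{*}(\nabla_{\zeta}\eta) = \nabla_{\tilde{\zeta}}(\pi^{*}\eta)$ for distributions. Unwinding the definition of $\pi^{*}$ from Remark \ref{rk4-1} together with the transpose characterization (\ref{eqcon}), both sides evaluated on a test section $\tilde{\tau}$ reduce to $\eta$ applied to $\pi_{*}({}^{t}\nabla_{\tilde{\zeta}}\tilde{\tau})$ and to ${}^{t}\nabla_{\zeta}(\pi_{*}\tilde{\tau})$ respectively, so it suffices to establish $\pi_{*}({}^{t}\nabla_{\tilde{\zeta}}\tilde{\tau}) = {}^{t}\nabla_{\zeta}(\pi_{*}\tilde{\tau})$ for smooth test sections. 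Writing ${}^{t}\nabla_{\xi}\tilde{\tau} = -((\di\,\xi)\,\tilde{\tau} + \nabla_{\xi}\tilde{\tau})$ as in (\ref{eqcon}), this splits into two matching identities: the divergence term agrees because $\di\,\tilde{\zeta} = \pi^{*}(\di\,\zeta)$ by Lemma \ref{lem4-0-7}, combined with the projection formula $\pi_{*}((\pi^{*}g)\,\tilde{\tau}) = g\,\pi_{*}\tilde{\tau}$; and the connection term agrees because $\tilde{\zeta}$ is the $T^{n}$-invariant horizontal lift and $\nabla$ on $L^{\lambda} = \pi^{*}L_{\lambda}$ is the pulled-back connection, whence $\pi_{*}(\nabla_{\tilde{\zeta}}\tilde{\tau}) = \nabla_{\zeta}(\pi_{*}\tilde{\tau})$ by the standard compatibility of fibre integration with differentiation along invariant horizontal lifts.

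Combining the two steps gives $\pi^{*}(\nabla_{\zeta}\eta) = \nabla_{\tilde{\zeta}}(\pi^{*}\eta) = 0$. Since $\pi_{*}$ is surjective onto test sections on $M_{\lambda}$ (because $\int d\theta = 1$), the map $\pi^{*}$ is injective on distributions, so $\nabla_{\zeta}\eta = 0$; as $\zeta \in \Gamma(M_{\lambda}, TM_{\lambda}^{0,1})$ was arbitrary, $\eta$ is $\bar{\partial}$-closed. I expect the main obstacle to be the third step, namely justifying the distributional intertwining rigorously: the smooth-level push-forward identity $\pi_{*}(\nabla_{\tilde{\zeta}}\tilde{\tau}) = \nabla_{\zeta}(\pi_{*}\tilde{\tau})$ and, above all, the matching of the divergence correction terms produced by integrating by parts on $M^{\lambda}$ versus on $M_{\lambda}$, which is precisely where Lemma \ref{lem4-0-7} is indispensable.
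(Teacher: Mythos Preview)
Your proposal is correct and follows essentially the same route as the paper: lift $\zeta$ horizontally to $\tilde{\zeta}$, use Lemma \ref{lem4-0-7} to match the divergence terms in the transpose formula (\ref{eqcon}) and thereby relate $\nabla_{\zeta}\eta$ to $\nabla_{\tilde{\zeta}}(\pi^{*}\eta)$, then invoke the hypothesis. Two minor streamlinings appear in the paper's version: it argues directly that $\tilde{\zeta}\in\Gamma(M^{\lambda},\shP_{\mathrm{mix}})$ (your own decomposition $\tilde{\zeta}=\xi_{F}-v$ with $\xi_{F}\in F$ and $v\in\shI_{\CC}$ already proves this, so applying the hypothesis separately to $\xi_{F}$ and $v$ is redundant), and it only tests the intertwining against pulled-back test functions $\pi^{*}\phi$, which suffices, rather than establishing the full distributional identity $\pi^{*}(\nabla_{\zeta}\eta)=\nabla_{\tilde{\zeta}}(\pi^{*}\eta)$ and then invoking injectivity of $\pi^{*}$.
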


\begin{proof} To prove this statement, fixing the connection one form $\alpha \in \Omega^{1}(M^{\lambda},\fot)$ on principal $T^{n}$-bundle $\pi: M^{\lambda} \rightarrow M_{\lambda}$,
we denote the horizontal lifting of $\zeta$ with respect to the connection $\alpha$ by $\tilde{\zeta}$, for any $\zeta \in \Gamma(M_{\lambda},TM_{\lambda}^{0,1})$. In order to show
$\nabla_{\zeta}\eta= 0$, it is enough to show
, for any test function $ \phi \in D_{c}(M_{\lambda})$, 
$$ \left(\nabla_{\zeta}\eta\right)\left(\phi\right)= \left(\nabla_{\tilde{\zeta}} \left(\pi^{*}\eta\right)\right)\left(\pi^{*}\phi\right).
$$ 
Let $\vol_{\lambda}$ and $\vol^{\lambda}$ be volume forms of $M_{\lambda}$ and $M^{\lambda}$ respectively as defined before. In particular, $\vol^{\lambda}=\pi^{*}\vol_{\lambda} \wedge \alpha^{n}$ with respect to the principal $T^{n}$-connection $\alpha \in \Omega^{1}(M^{\lambda},\fot)$.
Applying equation (\ref{eqcon}) to the trivial bundle of $M_{\lambda}$ and $M^{\lambda}$ respectively, we obtain: 
\begin{equation}\label{eq3-1-0}
\left(\nabla_{\zeta}\eta\right)\left(\phi\right)= 
\eta\left(-\left(\di\zeta
\right)\phi - \nabla_{\zeta}\phi \right),\end{equation}
and 
\begin{equation}\label{eq3-1-1}
 \left(\nabla_{\tilde{\zeta}} \left(\pi^{*}\eta\right)\right)\left(\pi^{*}\phi\right)=
 \left(\pi^{*}\eta\right) \left(-\left( \di\tilde{\zeta}\right)\pi^{*}\phi - \nabla_{\tilde{\zeta}}\left(\pi^{*}\phi\right)\right),
\end{equation}
where $\di \zeta$ ($\di \tilde{\zeta}$ resp.) is the divergence of $\zeta$ ($\tilde{\zeta}$ resp.) with respect to $\vol_{\lambda}$ ($\vol^{\lambda}$ resp.). 
According to the Remark \ref{rk4-1}, we have:
 \begin{equation} \label{eq3-1-2}\left(\pi^{*}\eta\right) \left(\pi^{*}\left(-\left(\di\zeta
\right)\phi - \nabla_{\zeta}\phi\right)\right)
=\eta\left(-\left(\di\zeta
\right)\phi - \nabla_{\zeta}\phi\right).
\end{equation}
By Lemma \ref{lem4-0-7}, 
\begin{equation}\label{eq3-1-3}
\pi^{*} (\di{\zeta})= \di{\tilde{\zeta}}.\end{equation}
Note that $\pi^{*}(\zeta(\phi))= \pi^{*} \zeta(\pi^{*}\phi)$. By equation (\ref{eq3-1-3}), one has
\begin{equation}\pi^{*}\left(-\left(\di\zeta
\right)\phi - \nabla_{\zeta}\phi\right)=-\left( \di{\tilde{\zeta}}\right)\pi^{*}\phi -\nabla_{\tilde{\zeta}}\left(\pi^{*}\phi\right).\end{equation}
Furthermore:
\begin{equation}\label{eq3-1-4}
 \left(\pi^{*}\eta\right)\left(\pi^{*}\left(-\left(\di\zeta
\right)\phi - \nabla_{\zeta}\phi\right)\right)=\left(\pi^{*}\eta\right) \left(-\left( \di{\tilde{\zeta}}\right)\pi^{*}\phi -\nabla_{\tilde{\zeta}}\left(\pi^{*}\phi\right)\right).\end{equation}
Combining (\ref{eq3-1-0}), (\ref{eq3-1-1}), (\ref{eq3-1-2}), with (\ref{eq3-1-4}), we are able to conclude:
\begin{equation}\label{eq3-1-5}  \left(\nabla_{\zeta}\eta\right)\left(\phi\right)= \left(\nabla_{\tilde{\zeta}} \left(\pi^{*}\eta\right)\right)\left(\pi^{*}\phi\right).
\end{equation}
Then we restrict our attention to show $\tilde{\zeta} \in \Gamma(M^{\lambda}, \shP_{\mathrm{mix}})$. 
As $T^{n}$ acts freely on $M^{\lambda}$, $M^{\lambda} \times \fot \cong \shI_{\RR}|_{M^{\lambda}}$. 
Note that $\pi_{*}(\tilde{\zeta})=\zeta \in \Gamma(M_{\lambda}, TM_{\lambda}^{0,1})$ and $\alpha(\tilde{\zeta})=0$.
Since $ T_{p}M^{\lambda}  \otimes \CC \subset (\shD_{\CC})_{p}$ and $(\shP_{\mathrm{mix}})_{p} = (\shD_{\CC} \cap TM^{0,1})_{p} \oplus (\shI_{\CC})_{p}$, for any $p\in M^{0}$, we have $\tilde{\zeta}\in  \Gamma(M^{\lambda}, \shP_{\mathrm{mix}})$. According to what we assume, we have $\nabla_{\tilde{\zeta}} \left(\pi^{*}\eta\right)=0$. Therefore, by equation (\ref{eq3-1-5}), we have $$\left(\nabla_{\zeta}\eta\right)\left(\phi\right)=\left(\nabla_{\tilde{\zeta}} \left(\pi^{*}\eta\right)\right)\left(\pi^{*}\phi\right)=0, ~~~\forall \phi \in D_{c}(M_{\lambda}), \zeta \in \Gamma(M_{\lambda}, TM_{\lambda}^{0,1}) .$$
\end{proof}

\subsubsection{Building the isomorphism $H^{0}(M_{\lambda}, L_{\lambda}) \cong \shH_{\mathrm{mix},\lambda}$}
Recall given any $s \in H^{0}(M_{\lambda},L_{\lambda})$, by Proposition \ref{pro4-0-4}, the associated distributional section $\imath(\delta^{s})$ belongs to $\shH_{mix,\lambda}$. Therefore we can define a homomorphism $$ \kappa: H^{0}(M_{\lambda}, L_{\lambda}) \rightarrow \shH_{\mathrm{mix},\lambda}$$ given by
$s \mapsto \kappa(s)=\imath(\delta^{s})$, where $\imath:\Gamma_{c} (M^{\lambda}, (L^{\lambda})^{-1})' \hookrightarrow \Gamma_{c}(M, L^{-1})' $ is the natural inclusion. It can be checked that $\kappa$ is injective.

\begin{theorem}\label{thm4-0-11}
 For any $\lambda \in \fot^{*}_{\ZZ, \mathrm{reg}}$, $ \kappa: H^{0}(M_{\lambda}, L_{\lambda}) \rightarrow \shH_{\mathrm{mix},\lambda}$ is an isomorphism.
\end{theorem}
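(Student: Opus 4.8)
Injectivity is immediate, since $\imath$ is injective and $\delta^{s} = \iota(\pi^{*}s)$ recovers $\pi^{*}s$, hence $s$, from pairing against $\vol^{\lambda}$; the plan is therefore to establish surjectivity, following the four steps outlined in the introduction. Fix $\lambda \in \fot^{*}_{\ZZ, \reg}$ and an arbitrary $\tilde{\delta} \in \shH_{\mathrm{mix}, \lambda}$, with the goal of producing $s \in H^{0}(M_{\lambda}, L_{\lambda})$ such that $\kappa(s) = \tilde{\delta}$. First I would reduce $\tilde{\delta}$ to a distribution living on $M^{\lambda}$. By Corollary \ref{co3-3}, $\tilde{\delta}(\tau) = 0$ whenever $\tau|_{M^{\lambda}} = 0$, so $\tilde{\delta}(\tau)$ depends only on $i^{*}\tau$. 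Since every $\sigma \in \Gamma_{c}(M^{\lambda}, (L^{\lambda})^{-1})$ extends to a compactly supported section of $L^{-1}$ over $M$, the prescription $\delta(\sigma) := \tilde{\delta}(\tau)$, for any extension $\tau$ of $\sigma$, is well defined and yields $\delta \in \Gamma_{c}(M^{\lambda}, (L^{\lambda})^{-1})'$ with $\tilde{\delta} = \imath(\delta)$; this is the precise meaning of $\tilde{\delta}$ being a delta function along $M^{\lambda}$ with no transverse derivatives.

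Next I would descend $\delta$ to $M_{\lambda}$. Since $\shI_{\CC} \subset \shP_{\mathrm{mix}}$, every fundamental field $\xi^{\#}$ lies in $\shP_{\mathrm{mix}}$, so $\nabla_{\xi^{\#}}\tilde{\delta} = 0$. As $\shP_{\mathrm{mix}} \subset \shD_{\CC}$ and $\shD_{\CC}|_{M^{\lambda}} = TM^{\lambda} \otimes \CC$, Theorem \ref{thm3-5} applies and gives $\imath(\nabla_{\xi^{\#}}\delta) = \nabla_{\xi^{\#}}\imath(\delta) = 0$, whence injectivity of $\imath$ yields $\nabla_{\xi^{\#}}\delta = 0$ for all $\xi \in \fot$. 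Lemma \ref{lem4-0-6}, applied to $\pi : M^{\lambda} \to M_{\lambda}$ and $L_{\lambda}$, then furnishes $\eta \in \Gamma_{c}(M_{\lambda}, L_{\lambda}^{-1})'$ with $\delta = \pi^{*}\eta$.

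The heart of the argument is to show $\eta$ is $\bar{\partial}$-closed, and hence smooth. For $\zeta \in \Gamma(M_{\lambda}, TM_{\lambda}^{0,1})$, its horizontal lift $\tilde{\zeta}$ lies in $\Gamma(M^{\lambda}, \shP_{\mathrm{mix}})$, as established within the proof of Theorem \ref{thm4-0-8}. Since the free action at level $\lambda$ places $M^{\lambda} \subset \check{M}$, where $\shP_{\mathrm{mix}}$ is a genuine smooth subbundle, I would extend $\tilde{\zeta}$ to some $\hat{\zeta} \in \Gamma(M, \shP_{\mathrm{mix}})$; then $\nabla_{\hat{\zeta}}\tilde{\delta} = 0$ by hypothesis, and Theorem \ref{thm3-5}, together with the fact that $\nabla_{\hat{\zeta}}\delta$ depends only on $\hat{\zeta}|_{M^{\lambda}} = \tilde{\zeta}$, gives $\nabla_{\tilde{\zeta}}(\pi^{*}\eta) = 0$. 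The same reasoning yields $\nabla_{\xi}(\pi^{*}\eta) = 0$ for every $\xi \in \Gamma(M^{\lambda}, \shP_{\mathrm{mix}})$, so Theorem \ref{thm4-0-8} forces $\nabla_{\zeta}\eta = 0$ for all $\zeta \in \Gamma(M_{\lambda}, TM_{\lambda}^{0,1})$; that is, $\eta$ is a $\bar{\partial}$-closed distributional section of $L_{\lambda}$. Consequently $\Delta\eta = \bar{\partial}^{*}\bar{\partial}\eta = 0$, and hypoellipticity of $\Delta$ makes $\eta$ smooth, so $\eta = \iota(s)$ with $s \in H^{0}(M_{\lambda}, L_{\lambda})$ holomorphic. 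Finally I would identify $\tilde{\delta}$ with $\kappa(s)$: a fibre-integration using $\vol^{\lambda} = \pi^{*}\vol_{\lambda} \wedge \alpha^{n}$ and $\int_{T^{n}} d\theta = 1$ shows $\pi^{*}\iota(s) = \iota(\pi^{*}s) = \delta^{s}$, whence $\delta = \pi^{*}\eta = \delta^{s}$ and $\tilde{\delta} = \imath(\delta^{s}) = \kappa(s)$.

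I expect the main obstacle to lie in the third step. One must transfer the vanishing condition attached to the \emph{singular} polarization $\shP_{\mathrm{mix}}$ on $M$ down to horizontal lifts on $M^{\lambda}$, which is exactly where $M^{\lambda} \subset \check{M}$ is indispensable: only there is $\shP_{\mathrm{mix}}$ an honest bundle near $M^{\lambda}$, so that the extension $\hat{\zeta}$ exists and Theorem \ref{thm3-5} can be invoked. One must then upgrade ``$\eta$ is $\bar{\partial}$-closed as a distribution'' to ``$\eta$ is smooth,'' which is the single genuine analytic input, resting on hypoellipticity of $\Delta = \bar{\partial}^{*}\bar{\partial}$. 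Everything else is formal bookkeeping with the commutative diagrams already in place.
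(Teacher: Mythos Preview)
Your proposal is correct and follows the paper's own proof essentially step for step: reduce $\tilde{\delta}$ to $\delta$ on $M^{\lambda}$ via Corollary~\ref{co3-3}, descend to $\eta$ on $M_{\lambda}$ via Theorem~\ref{thm3-5} and Lemma~\ref{lem4-0-6}, invoke Theorem~\ref{thm4-0-8} and elliptic regularity to get $\eta=\iota(s)$ holomorphic, and finally check $\pi^{*}\iota(s)=\delta^{s}$. The one place where you are slightly more explicit than the paper is in noting that $M^{\lambda}\subset\check{M}$ is needed to extend $\tilde{\zeta}$ to a global section $\hat{\zeta}\in\Gamma(M,\shP_{\mathrm{mix}})$ before Theorem~\ref{thm3-5} can be applied; the paper passes over this point without comment.
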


\begin{proof} Given any $\tilde{\delta} \in \shH_{\mathrm{mix}, \lambda}$, we need to construct $s \in H^{0}(M_{\lambda}, L_{\lambda})$ such that $\tilde{\delta}=\kappa(s)$. Firstly we show that, there exists $\delta \in \Gamma_{c}(M^{\lambda}, (L^{\lambda})^{-1})'$ such that $\tilde{\delta}= \imath(\delta)$ as follows: we define the distributional section $\delta \in \Gamma_{c}(M^{\lambda}, (L^{\lambda})^{-1})'$ by: 
$$\delta(\tau) = \tilde{\delta}(\tilde{\tau}),$$ for any $\tau \in \Gamma_{c}(M^{\lambda}, (L^{\lambda})^{-1})$, where $\tilde{\tau} \in \Gamma_{c}(M, L^{-1})$ is any test section satisfying $\tilde{\tau}|_{M^{\lambda}}=\tau$
By Corollary \ref{co3-3}, $\delta$ is well defined. 
 Moreover, one has 
 \begin{equation}
 \tilde{\delta}= \imath(\delta).
 \end{equation}
 That is, for any test section $\tilde{\tau}' \in \Gamma_{c}(M, L^{-1})$,
$(\imath(\delta))(\tilde{\tau}')= \delta(\tilde{\tau}'|_{M^{\lambda}}) = \tilde{\delta}(\tilde{\tau}').$
Secondly we show that there exists $\eta \in \Gamma_{c}(M_{\lambda}, L_{\lambda}^{-1})'$ such that $\delta= \pi^{*} \eta$, where $\pi: M^{\lambda} \rightarrow M_{\lambda}$ is the projection.
 For any $\tilde{\delta} \in \shH_{\mathrm{mix},\lambda}$, since $\xi^{\#} \in \Gamma(M, \shP_{\mathrm{mix}})$, we have 
 $\nabla_{\xi^{\#}}\tilde{\delta}=0$, for any $\xi \in \fot$.  By Theorem \ref{thm3-5}, one has
 \begin{equation}
0= \nabla_{\xi^{\#}}\tilde{\delta}=\nabla_{\xi^{\#}}(\imath(\delta)) = \imath( \nabla_{\xi^{\#}}\delta), ~~\forall \xi \in \fot.
 \end{equation}
 By the injectivity of $\imath$, we obtain, for any $\xi \in \fot$,
 \begin{equation}
 \nabla_{\xi^{\#}}\delta=0.
 \end{equation}
 According to Lemma \ref{lem4-0-6}, there exists a distributional section $\eta \in \Gamma_{c}(M_{\lambda},L_{\lambda}^{-1})'$, such that 
 \begin{equation}\label{eq3-48}
 \delta=\pi^{*} \eta.\end{equation}
Next we show that there exists a holomorphic section $s \in H^{0}(M_{\lambda},L_{\lambda})$ such that 
 $\eta= \iota(s)$ under the inclusion map
 $\iota: \Gamma(M_{\lambda}, L_{\lambda}) \rightarrow \Gamma_{c}(M_{\lambda},L_{\lambda}^{-1})'$ with respect to $\vol_{\lambda}$.
 By the definition of $\shP_{\mathrm{mix}}$, for any $\xi \in \Gamma(M, \shP_{\mathrm{mix}})$, we have $\xi|_{M^{\lambda}} \in \Gamma(M^{\lambda}, \shP_{\mathrm{mix}}) \subset  \Gamma(M^{\lambda},TM^{\lambda}\otimes \CC)$. By abuse of notation, we denote $\xi|_{M^{\lambda}}$ by $\xi$.
 According to Theorem \ref{thm4-0-8} and equation (\ref{eq3-48}), we have 
  \begin{equation}\label{eq3-49}
\nabla_{\xi}\tilde{\delta}=\nabla_{\xi}(\imath(\delta)) = \imath( \nabla_{\xi}\delta)= \imath( \nabla_{\xi}(\pi^{*} \eta)).
 \end{equation}
Since $\tilde{\delta} \in \shH_{\mathrm{mix},\lambda}$, $\nabla_{\xi} \tilde{\delta}=0$, for $\xi \in \Gamma(M, \shP_{\mathrm{mix}})$.
By the injectivity of $\imath$ and equation (\ref{eq3-49}), 
we obtain:
\begin{equation} 
 \nabla_{\xi}(\pi^{*} \eta)=0,~~ \forall \xi \in \Gamma(M^{\lambda}, \shP_{\mathrm{mix}}).
\end{equation}
 Then by Theorem \ref{thm4-0-8}, we have $\nabla_{\zeta} \eta=0$, for any $\zeta \in \Gamma(M_{\lambda}, TM_{\lambda}^{0,1})$. This implies $\nabla^{0,1} \eta=0$. By the regularity of elliptic operator $\Delta=\bar{\partial}^{*}\bar{\partial}$, $\eta$ is smooth. Therefore there exists a holomorphic section $s \in H^{0}(M_{\lambda},L_{\lambda})$ such that 
 $\eta= \iota(s)$ under the inclusion map
 $\iota: \Gamma(M_{\lambda}, L_{\lambda}) \rightarrow \Gamma_{c}(M_{\lambda},L_{\lambda}^{-1})'$ with respect to $\vol_{\lambda}$.
 It is remain to show $\tilde{\delta}= \kappa(s)$.
 
  According to the above discussion, we have
  $\tilde{\delta}= \imath (\pi^{*}(\iota(s)))$. Recall that $\kappa(s)=\imath(\delta^{s})$, where 
    $\delta^{s}$ with respect to volume form $\vol^{\lambda}$ is defined by
 \begin{equation} \label{eq3-2-1}
  \delta^{s}(\tau)= \int_{M^{\lambda}} \langle \pi^{*}s, \tau \rangle \vol^{\lambda},
 \end{equation}
  for any test section $\tau \in \Gamma_{c}(M^{\lambda}, L^{\lambda})'$.  
  By the injectivity of $\imath$, to show $\tilde{\delta}= \kappa(s)$, it is enough to show:
  \begin{equation}
  \pi^{*}(\iota(s))=\delta^{s}.
  \end{equation}
 By remark \ref{rk4-1}, we have 
\begin{equation} \label{eq3-2-2}
\int_{M^{\lambda}} \langle \pi^{*}s, \tau \rangle \vol^{\lambda}= (\pi^{*}s)(\tau)=s(\pi_{*} \tau)=\int_{M_{\lambda}} \langle s, \pi_{*}\tau \rangle \vol_{\lambda}
\end{equation}
And 
\begin{equation} \label{eq3-2-3}
\pi^{*}(\iota(s))(\tau)=(\iota(s))(\pi_{*} \tau)=\int_{M_{\lambda}} \langle s, \pi_{*}\tau \rangle \vol_{\lambda}.
\end{equation}
 
According to equations (\ref{eq3-2-1}
), (\ref{eq3-2-2}), and (\ref{eq3-2-3}), we have 
  $\pi^{*}(\iota(s))=\delta^{s}$.
  \end{proof}
\section{Appendix}

\subsection{Polarizations on symplectic manifolds}

A step in the process of geometric quantization is to choose a polarization.
We first recall the definitions polarizations on
symplectic manifolds $(M,\omega )$ (See \cite{SW, Wo}). All polarizations
discussed in this subsection are smooth.

\begin{definition}
\label{def4-1}
A \emph{complex polarization} on $M$ is a complex sub-bundle of the complexified tangent bundle $TM\otimes 
\mathbb{C}$ satisfying the following conditions:
\begin{enumerate}
\item $\mathcal{P}$ is involutive, i.e. if $u,v \in \Gamma(M,\mathcal{P})$,
then $[u,v] \in \Gamma(M,\mathcal{P})$;
\item for every $x \in M$, $\mathcal{P}_{x} \subseteq T_{x}M \otimes {
\mathbb{C}}$ is Lagrangian; and
\item  $\mathrm{rk}_{\RR}\left( \mathcal{P}\right):=\mathrm{rank}(\mathcal{P}\cap \overline{
\mathcal{P}}\cap TM)$ is constant.
\end{enumerate}
Furthermore, $\shP$ is called
\begin{enumerate}
\item  [$\cdot$]{\em real polarization}, if $\mathcal{P}=\overline{\mathcal{P}}$,
i.e. $\mathrm{rk}_{\RR}\left( \mathcal{P}\right) =m$; 
\item  [$\cdot$] {\em K\"ahler polarization}, if $\mathcal{P}\cap \overline{
\mathcal{P}}=0$, i.e. $\mathrm{rk}_{\RR}\left( \mathcal{P}\right) =0$; 
\item [$\cdot$] {\em mixed polarization}, if $0 < \rank(\shP \cap \overline{\shP} \cap TM) < m$, i.e. $0<\mathrm{rk}_{\RR}\left( \mathcal{P}\right) <m$. 
\end{enumerate}
\end{definition}

\subsection{Singular polarizations on symplectic manifolds}

In subsection, we review the definitions of singular polarizations, smooth sections of singular polarizations which were used in the proof of the main results (see \cite{LW1}). 
\begin{definition}\label{def4-3}
 $\shP \subset  TM \otimes\CC$ is a {\em singular complex distribution} on $M$ if it satisfies:
$\shP_{p} $ is a vector subspace of $ T_{p}M \otimes \CC$, for all point $p \in M$.
Such a $\shP$ is called {\em smooth on an open subset $\check{M} \subset M$} if 
 $\shP|_{\check{M}}$ is a smooth sub-bundle of the tangent bundle $T\check{M} \otimes \CC$.
\end{definition}

\begin{remark}
In this paper, we only consider such distributions with mild singularities in the sense that they are only singular outside an open dense subset $\check{M}\subset M$. Under our setting, we define smooth sections of singular distributions and  {\em involutive distributions} as follows.
\end{remark}

\begin{definition}\label{def4-4}
Let $\shP$ be a singular complex distribution of $TM\otimes \CC$. For any open subset $U$ of $M$, {\em the space of smooth sections of $\shP$ on $U$} is defined by the smooth section of $TM\otimes \CC$ with value in $\shP$, that is,
$$ \Gamma(U, \shP) = \{ v \in \Gamma(U, TM \otimes \CC) \mid  v_{p} \in (\shP)_{p}, \forall  p\in U \}.$$
\end{definition}

\begin{definition}\label{def4-5}
Let $\shP$ be a singular complex distribution on $M$. $\shP$ is {\em involutive} if it satisfies:
$$[u, v] \in \Gamma(M,\shP), \mathrm{~for~ any~} u, v \in \Gamma(M, \shP).$$
\end{definition}

\begin{definition}\label{def4-7} Let $\shP$ be a singular complex distribution $\shP$ on $M$ and smooth on $\check{M}$. Such a $\shP$ is called a {\em singular polarization on $M$}, if it satisfies the following conditions:
\begin{enumerate} [label = (\alph*)]
\item $\shP$ is involutive, i.e. if $u, v \in \Gamma(M,\shP)$, then $ [u, v] \in \Gamma(M,\shP)$;
\item for every $x \in \check{M}$, $\shP_{p} \subseteq T_{p}M \otimes \CC$ is Lagrangian; and
\item the real rank $ \mathrm{rk}_{\RR}(\shP):=\rank(\shP \cap \overline{\shP} \cap TM)|_{\check{M}}$ is a constant.
\end{enumerate}
 Furthermore, such a singular $\shP$ is called 
 \begin{enumerate}
 \item   [$\cdot$] {\em real polarization}, if $\shP|_{\check{M}} = \overline{\shP}|_{\check{M}}$, i.e. $\mathrm{rk}_{\RR}(\shP|_{\check{M}})=m$;
 \item [$\cdot$]  {\em K\"ahler polarization}, if $\mathcal{P}_{\check{M}}\cap \overline{
\mathcal{P}}|_{\check{M}}=0$ on $\check{M}$, i.e. $r\left( \shP|_{\check{M}}\right) =0$; 
 \item  [$\cdot$]  {\em mixed polarization}, if $0 < \rank(\shP \cap \overline{\shP} \cap TM)|_{\check{M}} < m$, i.e. $0< \mathrm{rk}_{\RR}(\shP|_{\check{M}})<m$.
 \end{enumerate}
\end{definition}

   \subsection{Coisotropic embedding theorem}
We review the coisotropic embedding theorem studied by Guillemin in \cite{Gui1}, which was used in the proof of taking divergence.
 Let $(M, \omega)$ be a symplectic manifold of dimensional $2m$ equipped with Hamiltonian $T^{n}$-action with moment map $\mu$.
 Without loss of generality, we assume $n=1$. Choose a principal $T^{1}$-connection $\alpha \in \Omega^{1}(M^{0}, \fot)$ on $M^{0}$, where $M^{0}=\mu^{-1}(0)$.
Consider $M^{0}$ as a submanifold of $M^{0} \times \RR$ via the embedding $$i: M^{0} \rightarrow M^{0} \times \RR, ~~i(p)=(p,0).$$ 
On the product space $\tilde{M}=M^{0} \times (-\epsilon, \epsilon)$,
the two-form $$\tilde{\omega}= \pi^{*} \omega_{0} + d(t \alpha), -\epsilon < t < \epsilon$$ is symplectic on $ \tilde{M}$ and satisfies $i^{*} \tilde{\omega}=\pi^{*}\omega_{0}$.
 Extending the $T^{1}$-action on $M^{0}$ to $M^{0} \times \fot^{*}$ in a trivial manner.
Then $\tilde{\omega}$ is $T^{1}$-invariant and that the action of $T^{1}$ on $ M^{0} \times \fot^{*}$ is Hamiltonian with moment map
$$\mu_{0}:  M^{0} \times \fot^{*} \rightarrow \fot^{*}, (p, t) \mapsto t.$$ 

\begin{theorem}\cite[Theorem 2.2]{GS1} In a neighborhood of $M^{0}$, the Hamiltonian $T^{n}$-spaces $(M, \omega)$ and $(\tilde{M}, \tilde{\omega})$ are isomorphic.
\end{theorem}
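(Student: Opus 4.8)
The plan is to prove this as an equivariant version of the Moser--Weinstein coisotropic neighborhood theorem, producing a $T^{1}$-equivariant symplectomorphism between neighborhoods of $M^{0}$ that fixes $M^{0}$ pointwise and intertwines the two moment maps. First I would record that $M^{0}$ sits inside both $(M,\omega)$ and $(\tilde{M},\tilde{\omega})$ as a coisotropic submanifold inducing the \emph{same} presymplectic form, since $i^{*}\omega=\pi^{*}\omega_{0}=i^{*}\tilde{\omega}$ by construction, and that the characteristic foliation of this presymplectic form is in both cases tangent to the $T^{1}$-orbits. As $0$ is a regular value of $\mu$, the normal direction to $M^{0}$ is one-dimensional and is detected by the moment map.

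Next I would build a $T^{1}$-equivariant tubular neighborhood identification. Choose a $T^{1}$-invariant vector field $\nu$ defined near $M^{0}\subset M$, transverse to $M^{0}$ and normalized by $d\mu(\nu)=1$; such $\nu$ exists by averaging any transverse field over $T^{1}$ and rescaling. Its flow gives a $T^{1}$-equivariant diffeomorphism $\psi$ from a neighborhood of $M^{0}\times\{0\}$ in $\tilde{M}=M^{0}\times(-\epsilon,\epsilon)$ onto a neighborhood of $M^{0}$ in $M$, with $\psi(p,0)=p$ and $\mu(\psi(p,t))=t$; that is, $\psi$ intertwines the moment maps $\mu_{0}(p,t)=t$ and $\mu$. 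Pulling back, set $\omega_{1}:=\psi^{*}\omega$. Then $\omega_{1}$ and $\tilde{\omega}$ are two $T^{1}$-invariant symplectic forms on a neighborhood of $M^{0}$ in $\tilde{M}$ that have the same moment map $\mu_{0}$ for the $T^{1}$-action and agree on $M^{0}$, so that $\beta:=\omega_{1}-\tilde{\omega}$ is a closed, $T^{1}$-invariant two-form with $i^{*}\beta=0$ and $\imath_{\xi^{\#}}\beta=0$ (the latter because both forms satisfy $\imath_{\xi^{\#}}\,\cdot=-d\mu_{0}$).

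The heart of the argument is an equivariant Moser deformation. Consider the fiberwise linear retraction $r_{s}(p,t)=(p,st)$ of $\tilde{M}$ onto $M^{0}$, which is $T^{1}$-equivariant and moves only in the $t$-direction, with $r_{1}=\id$ and $r_{0}=i\circ\pi_{0}$ for $\pi_{0}\colon\tilde{M}\to M^{0}$ the projection. Its associated homotopy operator $h$ yields, from $d\beta=0$ and $(i\circ\pi_{0})^{*}\beta=\pi_{0}^{*}i^{*}\beta=0$, a $T^{1}$-invariant one-form $\sigma:=h\beta$ with $d\sigma=\beta$ and $\sigma|_{M^{0}}=0$. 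Crucially, because the retraction direction commutes with $\xi^{\#}$, the operator $h$ commutes with contraction, so $\imath_{\xi^{\#}}\sigma=h(\imath_{\xi^{\#}}\beta)=0$. Now put $\omega_{s}:=\tilde{\omega}+s\beta$, nondegenerate near $M^{0}$ for all $s\in[0,1]$ by openness, and define the time-dependent vector field $X_{s}$ by $\imath_{X_{s}}\omega_{s}=-\sigma$. Since $\sigma$ vanishes on $M^{0}$ so does $X_{s}$, and since $d\mu_{0}=-\imath_{\xi^{\#}}\omega_{s}$ we get $X_{s}\mu_{0}=-\imath_{X_{s}}\imath_{\xi^{\#}}\omega_{s}=\imath_{\xi^{\#}}\sigma=0$, so the flow preserves $\mu_{0}$. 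The standard Moser computation $\tfrac{d}{ds}\Phi_{s}^{*}\omega_{s}=\Phi_{s}^{*}(d\imath_{X_{s}}\omega_{s}+\beta)=\Phi_{s}^{*}(-d\sigma+\beta)=0$ then shows that the time-one flow $\Phi_{1}$ is a $T^{1}$-equivariant diffeomorphism fixing $M^{0}$, preserving $\mu_{0}$, and satisfying $\Phi_{1}^{*}\omega_{1}=\tilde{\omega}$. The composite $\psi\circ\Phi_{1}$ is then the desired isomorphism of Hamiltonian $T^{1}$-spaces near $M^{0}$; the general $n$ case is identical upon replacing $t$ by the $\fot^{*}$-coordinate, using the $\fot$-valued connection $\alpha$, the retraction $r_{s}(p,\xi)=(p,s\xi)$, and normalizing a transverse frame by $d\mu$.

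I would flag the construction of the primitive $\sigma$ as the main obstacle: it is not enough to invoke the ordinary relative Poincar\'e lemma, because upgrading a mere symplectomorphism to an isomorphism of \emph{Hamiltonian} $T^{n}$-spaces forces $\sigma$ to be simultaneously $T^{n}$-invariant and annihilated by every fundamental vector field $\xi^{\#}$. Verifying that the homotopy operator attached to the linear fiber retraction has both properties---so that the resulting Moser flow is equivariant \emph{and} preserves the moment map---is the step demanding the most care.
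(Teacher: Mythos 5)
The paper itself offers no proof of this statement---it is quoted verbatim from Guillemin's book (the citation ``Theorem 2.2'')---so your proposal can only be judged on its own merits. The route you take, an equivariant Moser--Weinstein argument, is indeed the standard proof of the coisotropic embedding theorem, and most of your steps are sound: the invariant transversal $\nu$ with $d\mu(\nu)=1$ obtained by averaging does give a $T^{1}$-equivariant tubular neighborhood $\psi$ intertwining $\mu$ and $\mu_{0}$; the form $\beta=\psi^{*}\omega-\tilde{\omega}$ is closed, invariant, and satisfies $i^{*}\beta=0$ and $\imath_{\xi^{\#}}\beta=0$; the radial homotopy operator does produce an invariant primitive $\sigma$ vanishing pointwise along $M^{0}$ with $\imath_{\xi^{\#}}\sigma=0$; and granting nondegeneracy of $\omega_{s}$, the Moser flow is equivariant, fixes $M^{0}$, preserves $\mu_{0}$, and yields the desired isomorphism.

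The genuine gap is the clause ``$\omega_{s}:=\tilde{\omega}+s\beta$, nondegenerate near $M^{0}$ for all $s\in[0,1]$ by openness.'' Openness only helps after you know $\omega_{s}$ is nondegenerate \emph{at the points of $M^{0}$ itself} for every intermediate $s$, and your construction does not give this for free: you have only arranged $i^{*}\beta=0$ (vanishing of the pullback to $TM^{0}$), not $\beta=0$ on the restricted bundle $T\tilde{M}|_{M^{0}}$. Concretely, for $u\in T_{p}M^{0}$ one computes $\beta(\partial_{t},u)=\omega(\nu,u)-\alpha(u)$, and nothing in ``invariant, transverse, $d\mu(\nu)=1$'' controls $\omega(\nu,u)$ for $\alpha$-horizontal $u$: replacing $\nu$ by $\nu+w$ with $w$ invariant and tangent to $M^{0}$ changes this term arbitrarily. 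Since a convex combination of two symplectic forms that merely agree after pullback to a hypersurface can perfectly well degenerate, this step needs an argument---it is exactly the point where Weinstein-type neighborhood theorems require pointwise agreement along the submanifold. Two ways to close it: (a) normalize the transversal more strongly, requiring $\imath_{\nu}\omega|_{TM^{0}}=\alpha$ along $M^{0}$ (such $\nu$ exists, is unique up to the characteristic direction $\RR\xi^{\#}$, can be chosen invariant, and automatically satisfies $d\mu(\nu)=1$); then $\beta$ vanishes identically on $T\tilde{M}|_{M^{0}}$ and compactness of $M^{0}$ gives a uniform neighborhood on which every $\omega_{s}$ is symplectic; or (b) keep your $\nu$ and verify nondegeneracy along $M^{0}$ directly: by $i^{*}\beta=0$ and $\imath_{\xi^{\#}}\beta=0$ one has $\beta_{p}=dt\wedge\gamma$ with $\gamma(\xi^{\#})=0$, and writing $T_{p}\tilde{M}=\RR\partial_{t}\oplus\RR\xi^{\#}\oplus H$ with $H$ the $\alpha$-horizontal part of $T_{p}M^{0}$, a vector $a\partial_{t}+b\xi^{\#}+v$ in $\ker\omega_{s}$ is forced to have $a=0$ (pair with $\xi^{\#}$), then $v=0$ (pair with $H$, using nondegeneracy of $\omega_{0}$), then $b=0$ (pair with $\partial_{t}$). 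Either fix is short, but without one of them the Moser deformation is not known to stay symplectic, which is the heart of the theorem.
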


\subsection{Geometric quantization commute with symplectic reduction} In this subsection, we review the work on geometric quantization commute with symplectic reduction by Guillemin and Sternberg in \cite{GS1}.
Let $(L, \nabla)$ and $(L_{\lambda}, \nabla_{\lambda})$ be the pre-quantum line bundle on $M$ and $M_{\lambda}$ respectively as discussed before, for $
\lambda \in \fot^{*}_{\ZZ, \mathrm{reg}}$.
Then the quantum space $\shH_{P_{J}}$ associated to $\shP_{J}$ is the space of $J$-holomorphic sections of $L$:
$$\shH_{P_{J}}=  \{s \in \Gamma(M, L) \mid \bar{\partial}_{J} s =0\}=H^{0}(M,L).$$

One can perform two processes on the pre-quantum line bundle $(L, \nabla)$; one is geometric quantization, and the other is symplectic reduction.  
  Guillemin and Sternberg in \cite{GS1} showed that these two processes commute with each other, that is,
 \begin{equation}
 (\shH_{P_{J}})_{\lambda} \cong \shH_{P_{J, \lambda}}, 
 \end{equation}
where $ (\shH_{P_{J}})_{\lambda}$ ($J_{\lambda}$-holomorphic sections of $L_{\lambda}$) is the $\lambda$-weight subspace of $ \shH_{P_{J}}$ and $\shH_{P_{J, \lambda}}$ is the quantum space associated to reduced K\"ahler polarization $\shP_{J, \lambda}$, i.e.
\begin{equation}
 \shH_{P_{J, \lambda}}=  \{s \in \Gamma(M_{\lambda}, L_{\lambda}) \mid \bar{\partial}_{J_{\lambda}} s =0\}=H^{0}(M_{\lambda},L_{\lambda}).
\end{equation}

\phantomsection
%\vspace*{40pt}
%\begin{thebibliography}{11}
%\setlength{\itemsep}{0 pt}
%\pagestyle{fancy}
%\fancyhead{}
%\fancyhead[C]{\CJKfamily{song}{\textnormal{参考文献}}}
%\cfoot{\thepage}
%\addcontentsline{toc}{section}{参考文献}
%\thispagestyle{plain}
%\vspace{20pt}

\bibliographystyle{amsplain}

\end{document}